\documentclass[12pt]{amsart}
\usepackage{cases}
\usepackage{amsmath}
\usepackage{mathrsfs}
\usepackage{amssymb}
\usepackage{amsfonts}
\usepackage{graphicx}
\usepackage{wrapfig}
\usepackage{stmaryrd}
\usepackage{abstract}
\usepackage{upgreek}
\usepackage{pdfsync}

\usepackage{geometry}
\newgeometry{left=1.8cm,right=1.8cm,top=2cm,bottom=1.8cm}

\newtheorem{theorem}{Theorem}[section]
\newtheorem{proposition}[theorem]{Proposition}
\newtheorem{lemma}[theorem]{Lemma}
\newtheorem{corollary}[theorem]{Corollary}

\newtheorem{definition}[theorem]{Definition}
\newtheorem{example}[theorem]{Example}

\newtheorem{asmp}[theorem]{Assumption}
\newtheorem{remark}[theorem]{Remark}
\numberwithin{equation}{section}
\numberwithin{case}{section}
\numberwithin{subcase}{case}
\numberwithin{subsubcase}{subcase}

\newcommand{\bm}[1]{{\mbox{\boldmath$#1$}}}

\begin{document}
\title[Viscosity Characterization of Arbitrage Function under Model Uncertainty]{Viscosity Characterization of the Arbitrage Function under Model Uncertainty}

\date{\today}

\author{Yinghui Wang}\email{yinghui@math.columbia.edu}
\address{Department of Mathematics, Columbia University, New York, NY 10027}

\subjclass[2010]{}

\keywords{Arbitrage; dynamic programming; fully nonlinear second-order parabolic PDE; HJB equation; model uncertainty; optimal stochastic control; viscosity solution.}

\thanks{{\it Acknowledgments:} The author is greatly indebted to her advisor Professor Ioannis Karatzas for suggesting this problem, for very careful readings of the previous versions of the paper, and for invaluable advice. She is also grateful to Professor Marcel Nutz, for advice on the Dynamic Programming Principle and on the literature on the subject of this paper. This research was supported in part by the National Science Foundation under Grant NSF-DMS-14-05210.}

\maketitle

\begin{abstract}

$ $

We show that in an equity market model with  Knightian   uncertainty  regarding the relative risk and covariance structure of its assets, the {\it arbitrage function} -- defined as the reciprocal of the highest return on investment that can be achieved relative to the market using nonanticipative strategies, and under any admissible market model configuration -- is a viscosity solution of an associated \textsc{Hamilton-Jacobi-Bellman} (HJB) equation under appropriate boundedness, continuity and Markovian assumptions on the uncertainty structure.  
This result generalizes that of \textsc{Fernholz \& Karatzas} (2011), who characterized this arbitrage function as a classical solution of a  \textsc{Cauchy} problem for this HJB equation under much stronger conditions than those needed here.
\end{abstract}
 
%%%%%%%%%%%%%%%%%%%%
\section{Introduction}
 \label{section:intro}
%%%%%%%%%%%%%%%%%%%%

We consider an equity market with asset capitalizations $\mathfrak{X}(t) =
(X_1(t), . . . ,X_n(t))' \in (0,\infty)^n$ at time $t \in [0,\infty)$, and with local covariation rates \label{p:i}$\alpha( t,\mathfrak{X})  = \left( \alpha_{ij} (t,\mathfrak{X})\right) _{1\le i,j\le n}$ and local relative risk rates $\vartheta(t,\mathfrak{X}) = \left( \vartheta_1(t,\mathfrak{X}), . . . , \vartheta_n(t,\mathfrak{X})\right)'$,   which are nonanticipative functionals of (i.e., are determined by) the past and present capitalizations   for any given time $t$. We denote by \label{p:S+}$\mathbb{S}_{+}(n)$ is the space of real, symmetric and positive-definite  $n \times n$ matrices, fix a collection $\{\mathcal{K}(y)\}_{y\in (0,\infty) \}}$ of nonempty, compact and convex subsets on $\mathbb{R}^n \times \mathbb{S}_{+}(n)$, and pose the following question: 

\smallskip
{\it If  the   pair $\left( \vartheta(t,\mathfrak{X}),\alpha(t,\mathfrak{X})\right) $ is  restricted  to take values in   a given nonempty  subset $\mathcal{K}\left( \mathfrak{X}(t)\right) $ of $  \mathbb{R}^n \times \mathbb{S}_{+}(n) $,    what is the highest return on investment relative
to the market portfolio  over the given time horizon $[0,T]$\label{p:T}, that can be achieved using nonanticipative investment rules, when starting with initial capitalizations \label{p:x}$x=(x_1,\dots,x_n)'\in (0,\infty)^n$, and
with probability one under all possible market model configurations with the above covariance and relative risk structure?  }

Equivalently, if the initial configuration of asset capitalizations is $x=(x_1,\dots,x_n)$,  what is the smallest
proportion of the initial total market capitalization $x_1+\dots+x_n$\,, starting with which one can match or outperform the market capitalization over a given time horizon $[0,T]$, by using nonanticipative investment rules,  and
with probability one under all possible market model configurations with the above covariance and relative risk structure? 

\smallskip
Our main result offers the following answers to these two questions: $1/\mathfrak{u}(T,x)$ and $\mathfrak{u}(T,x)$, respectively. Here the function $\mathfrak{u}: [0,\infty) \times \mathbb{R}_+^n\to(0, 1]$ is, subject to appropriate conditions that will be specified as we progress,     a {\it viscosity solution} to the \textsc{Cauchy} problem for the   {\it H{\scriptsize AMILTON}-J{\scriptsize ACOBI}-B{\scriptsize ELLMAN} (HJB)} fully nonlinear  partial differential equation 
\begin{equation}\label{eq:PDE}
\big(u_t-\widehat{\mathcal{L}}u\big)(t,x) = 0\,,\quad (t,x) \in (0, \infty)\times \mathbb{R}_{+}^n 
\end{equation}
of parabolic type,  subject to the initial condition
\begin{equation}\label{eq:initial cond}
u(0\,,\cdot) = 1\,,\quad x \in  \mathbb{R}_{+}^n\,.
\end{equation} 
Here   we are using the notation 
\begin{equation}\label{eq:hatL}
\widehat{\mathcal{L}} u(t,x)\,:=\, \sup_{a\in \mathcal{A}(x)} \mathcal{L}_{a}u(t,x)\,,\quad  \mathcal{L}_{a}u(t,x)\,
:=\,\sum_{i,j}  x_i x_j a_{ij} \left(\frac{D^2_{ij}}{2} + \frac{D_i}{||x||_1}\right) u(t,x)\,,
\end{equation}
for $(t,x) \in (0, \infty)\times \mathbb{R}_{+}^n$ with $a=(a_{ij})_{1\le i,j\le n}$\,; we are also using the $\,\ell^1$-norm\label{p:l1} $\,||x||_1:=\sum_i x_i\,$,
\begin{equation}\label{eq:A}
\mathcal{A}(x):= \big\{a\in  \mathbb{S}_{+}(n): \exists\ \theta \in  \mathbb{R}^n\ \text{s.t.}\ (\theta, a) \in \mathcal{K}(x)\big\}
\end{equation}
and employ the notation   $D_{i}u=u_{x_i}$\label{p:D_i}\,, $D^2_{ij}u=u_{x_i x_j}\,$, and  
\label{p:R_+^n}$\,\mathbb{R}_{+}^n:=(0,\infty)^n$. 
Furthermore, the above function $\mathfrak{u}$ is dominated by any nonnegative classical  supersolution of this \textsc{Cauchy} problem;   thus, it is the smallest nonnegative classical supersolution of this \textsc{Cauchy} problem, whenever it is of class $\, C\left( [0,\infty) \times\mathbb{R}_{+}^n \right) \, \cap\, C^{1,2}\left( (0,\infty) \times\mathbb{R}_{+}^n \right)$.

\smallskip

The function $\mathfrak{u}$ is called the {\it arbitrage function} for a model with uncertainty, in the terminology of \cite[Sections 1 and 4]{FK11}; this   extends the arbitrage function $\mathfrak{u}_{\mathcal{M}}$ for a specified model $\mathcal{M}$ in the terminology of \cite[Section 6]{FK10}. In \cite{FK11} the authors characterized the arbitrage  function $\mathfrak{u}$  as a classical solution of the HJB equation \eqref{eq:PDE}, subject to the initial condition of \eqref{eq:initial cond}, but under much stronger assumptions on the uncertainty structure; see Theorem \ref{thm:Knightian} below. 

\smallskip
Under much weaker conditions than in \cite{FK11},  we develop here a different  characterization of the arbitrage  function $\mathfrak{u}$,  as a viscosity solution to the \textsc{Cauchy} problem of \eqref{eq:PDE}, \eqref{eq:initial cond}. We first prove in Theorems \ref{thm:viscosity1} and \ref{thm:viscosity2} that the function $\widehat{\Phi}$ -- defined as the supremum of $\mathfrak{u}_{\mathcal{M}}$ over all possible market models $\mathcal{M}$ that satisfies certain strong Markov property ({\it strongly Markovian admissible systems} in Definition \ref{def:MM}) -- and the function ${\Phi}$ --
defined as the supremum of $\mathfrak{u}_{\mathcal{M}}$ over all possible market models $\mathcal{M}$ -- are viscosity subsolution and viscosity supersolution of this \textsc{Cauchy} problem, respectively.

Moreover, we show in Theorem \ref{thm:u=Phi} that the function $\mathfrak{u}$ coincides with  ${\Phi}$,  if this latter  function is continuous. As a consequence, the function $\mathfrak{u}$ is shown to be a viscosity supersolution of (\ref{eq:PDE}), and further, a viscosity solution of (\ref{eq:PDE}) if $\Phi\equiv \widehat{\Phi}^*$ (the {\em upper-semicontinuous envelope} of $\Phi$\,; see \eqref{eq:u^*}).

%%%%%%%%%%
\subsection{Preview}
%%%%%%%%%%

Section \ref{section:notation}   sets up the model for an equity market with model uncertainty regarding its covariance and relative risk characteristics, and Section \ref{section:previous} interprets the variables in this model,  introduces the concepts of investment rules and portfolios as well as the notion of arbitrage function, and reviews the results of \cite{FK11}. 

Section \ref{section:visc} recalls the definition of viscosity solutions, states our main results and discusses related work. Section \ref{section:subsol} characterizes the function $\widehat{\Phi}$ as a viscosity subsolution --  and further, in Section \ref{section:supersol}, the function ${\Phi}$ as a viscosity supersolution -- to the \textsc{Cauchy} problem of \eqref{eq:PDE}, \eqref{eq:initial cond}. 
 
Section \ref{section:u} provides conditions, under which the arbitrage function $\mathfrak{u}$ coincides with the function ${\Phi}$   (Theorems \ref{thm:min} and  \ref{thm:u=Phi}), and thus becomes a viscosity solution to the \textsc{Cauchy} problem \eqref{eq:PDE}, \eqref{eq:initial cond}.  Furthermore, these conditions imply that, if $\mathfrak{u}$ is of class $C\left( [0,\infty) \times\mathbb{R}_{+}^n \right) \, \cap\, C^{1,2}\left( (0,\infty) \times\mathbb{R}_{+}^n \right) $,   it is a classical solution and in fact the smallest nonnegative (super)solution of this \textsc{Cauchy} problem (Corollary \ref{coro:u sol}). Additional results, namely,   Propositions \ref{prop:suff} and  \ref{prop:u_M supersol}, provide conditions on the covariance and relative risk structure, under which $\mathfrak{u}\equiv{\Phi}\equiv\widehat{\Phi}$ and it is indeed the smallest nonnegative (super)solution of this \textsc{Cauchy} problem.

\smallskip
Section  \ref{appendix:min} develops the proof of Theorem \ref{thm:min}. 
Section \ref{section:example} concludes with examples from the (generalized) volatility-stabilized model of \cite{FK05}, \cite{Pi}. Finally, Appendix \ref{section:subsol2} presents   an alternative proof for the viscosity characterizations of the functions $\widehat{\Phi}$ and ${\Phi}$.

\section{Notation and Terminology}\label{section:notation}

We shall fix the dimension $n$, let 
$\Omega:= C([0,\infty); \mathbb{R}_+^n)$ be the canonical space of continuous paths  \label{p:omega}$\omega: [0,\infty) \to \mathbb{R}_+^n$ equipped with the topology of locally uniform convergence.
We shall also 
denote  by \label{p:calF}$\mathcal{F}$ the Borel $\sigma$-field of $\Omega$\,, 
and  \label{p:bbF}$\mathbb{F} = \{\mathcal{F}(t)\}_{0\le t<\infty}$ the raw filtration generated by the canonical process \label{p:B}  $\mathfrak{B}(t,\omega):=\omega(t)$.

We shall let $\, \mathbf{ 0} = (0, \cdots, 0)'$ denote the origin in $\mathbb{R}^n$, and 
\begin{equation}\label{eq:K}
\mathbb{K}=\{\mathcal{K}(y)\}_{y\in [0,\infty)^n\backslash \{\mathbf{ 0}\}}
\end{equation} 
be a collection of nonempty, compact and convex subsets on $\mathbb{R}^n \times \mathbb{S}_{+}(n)$  (recall that $\mathbb{S}_{+}(n)$ is the space of real, symmetric, positive-definite  $n \times n$ matrices).  We denote by \label{p:frakK}$\,\mathfrak{K}\,$ the collection of pairs $(\sigma, \vartheta)$ consisting  of progressively measurable functionals $\sigma = (\sigma_{ik})_{n\times n} : [0,\infty)\times \Omega \to {\mathrm{GL}}(n)$\label{p:sigma} and $\vartheta = (\vartheta_1,\dots,\vartheta_n)^\prime : [0,\infty)\times \Omega \to \mathbb{R}^n$, such that   
\begin{equation}
\label{eq:integrability}
\big(\vartheta(T,\omega),\alpha(T,\omega)\big) \in \mathcal{K}(\omega(T))
\qquad \text{and} \qquad 
\int_0^T \left(||\vartheta(t,\omega)||^2 +\mathrm{Tr}(\alpha(t,\omega))\right)\mathrm{d}t <\infty
\end{equation} 
hold for all $\omega \in \Omega$,   $T \in (0,\infty)$, where 
\begin{equation}\label{eq:alpha}
\alpha := \sigma \sigma^\prime \,.
\end{equation}
Here and throughout the paper, $\, \prime \,$\label{p:prime} denotes transposition and \label{p:GL(n)} ${\mathrm{GL}}(n)$  the space of $n\times n$ invertible real matrices.

\begin{definition}
{\bf Admissible Systems \cite[Sections 1 and 2]{FK11}:} \label{def:M}
{\rm 
For a given  $x = (x_1, \dots, x_n)'\in \mathbb{R}_{+}^n\,$, we
shall call {\em{admissible system}}, subject to the    {\em{Knightian uncertainty}} $\,\mathbb{K}$ with initial configuration $x$, a quintuple $\mathcal{M}= (\sigma,\vartheta, \mathbb{P }, W, \mathfrak{X})$ consisting of 

\smallskip
\noindent
{\bf (i)} a pair $(\sigma,\vartheta)\in \mathfrak{K}$\,; of\\
{\bf (ii)} a probability measure $\mathbb{P}$ on the measurable space $(\Omega,\mathcal{F} )$; of \\
{\bf (iii)} an $n$-dimensional $\mathbb{F}-$Brownian motion $W(\cdot) = (W_1(\cdot), \dots, W_n(\cdot))'$ on the filtered probability space $(\Omega, \mathcal{F}, \mathbb{P}), \mathbb{F}$\,; and of \\
{\bf (iv)} a continuous, $\mathbb{F}-$adapted 
process $\,\mathfrak{X}(\cdot) = (X_1(\cdot), \dots, X_n(\cdot))'$  with values in $\mathbb{R}_{+}^n\,$ and 
\begin{equation}\label{eq:SDE}
\mathrm{d}X_i(t) = X_i(t)\sum_k \sigma_{ik}(t,\mathfrak{X}) \big(\vartheta_k(t,\mathfrak{X})\, \mathrm{d}t + \mathrm{d}W_k(t) \big)\,,\ \  \ \ i = 1,\dots,n\,,\ \ \ \ \mathfrak{X}(0)=x\,.
\end{equation} 
The integrability condition \eqref{eq:integrability} guarantees that the process $\mathfrak{X}(\cdot)$ indeed takes  values in $\mathbb{R}_{+}^n\,$, $\mathbb{P}-$a.s. 

\smallskip
We shall  write $\sigma^{\mathcal{M}}, \vartheta^{\mathcal{M}}, \mathbb{P}^{\mathcal{M}}, W^{\mathcal{M}}$\label{p:^M} and $\mathfrak{X}^{\mathcal{M}}$ for the elements $\sigma,\vartheta, \mathbb{P },  W$ and $\mathfrak{X}$ of the quintuple  $\mathcal{M}$, respectively, and  \label{p:M(x)}$\mathfrak{M}(x)$ for the collection of admissible systems with initial configuration $x \in \mathbb{R}_{+}^n\,$. \qed
}
\end{definition}

In Definition \ref{def:M} and throughout this paper, all vectors are assumed to be column vectors, and summations to extend from $1$ to $n$\,.

\begin{definition}\label{def:MM}
{\bf Strongly Markovian Admissible Systems:} 
{\rm
For a given initial configuration $x \in \mathbb{R}_+^n\,$,  we
shall call {\em{strongly Markovian admissible system}}, subject to the  Knightian  uncertainty $\,\mathbb{K}$ with initial configuration $x$, an admissible system $\mathcal{M}=(\sigma,\vartheta, \mathbb{P}, W, \mathfrak{X})\in \mathfrak{M}(x)$  satisfying:

\noindent
{\bf (i)}
the functionals $\sigma$ and $\vartheta$ are {\it Markovian} and {\it time-homogeneous}, i.e.,
\begin{equation}\label{eq:Markovian}
\sigma(t,\omega)=\mathbf{s}(\omega(t))=(\mathbf{s}_{ij}(\omega(t)))_{1\le i,j\le n}\quad \mathrm{and}\quad \vartheta(t,\omega)=\boldsymbol{\theta}(\omega(t))=(\boldsymbol{\theta}_1(\omega(t)),\dots,\boldsymbol{\theta}_n(\omega(t)))'
\end{equation}
for some measurable functions $\mathbf{s}: \mathbb{R}_+^n\to {\mathrm{GL}}(n)$ and  $\boldsymbol{\theta}: \mathbb{R}^n_+\to \mathbb{R}^n$; and

\noindent
{\bf (ii)}
for every $y\in \mathbb{R}_{+}^n$\,, there exists an admissible system $\mathcal{M}^y\in \mathfrak{M}(y)$ with the same $\mathbf{s}(\cdot)$ and $\boldsymbol{\theta}(\cdot)$ as in ${\mathcal{M}}$, and a strongly Markovian state process $\mathfrak{X}(\cdot)$.

We shall denote by  $\,{\widehat{\mathfrak{M}}}(x)\,$ the subcollection of $\mathfrak{M}(x)$ consisting of all strongly Markovian admissible systems with initial configuration $x$.
}\qed
\end{definition}

\begin{remark}
{\rm
It follows from the Markovian selection results of \textsc{Krylov} (see \cite{K73}, \cite[Chapter 12]{SV} and \cite[Theorem 5.4]{EK}) that, 
if the collection of subsets $\mathbb{K}$ satisfies the linear growth condition   
\begin{equation} \label{eq:KLinearGrow}
\sup_{(\theta,a)\in \mathcal{K}(y),\,b=\varsigma\theta,\, \varsigma\varsigma'=a} 
\left[\sum_{i,j}y_i y_j a_{ij} +\sum_i (y_i b_i)^2\right]
\le C(1+||y||)^2, \ \  \forall ~ y\in [0,\infty)^n \backslash \{\mathbf{ 0}\}\,,
\end{equation}
for some constant $C>0$\,, then the state process $\mathfrak{X}(\cdot)$ can be chosen to be
strongly Markovian under $\mathbb{P}^{\mathcal{M}}$ for any admissible system $\mathcal{M}$ with Markovian and time-homogeneous $\sigma$ and $\vartheta$ as  in \eqref{eq:Markovian}.
}  \qed
\end{remark}

\begin{remark}\label{rmk:nonemptyM}
{\rm 
{\bf (i)} A sufficient condition for  
$\, \widehat{\mathfrak{M}} (x)\neq \emptyset \,$ to hold for all $x\in \mathbb{R}_+^n$\,, is that there exist locally \textsc{Lipschitz} functions $\mathbf{s}(\cdot)$ and  $\boldsymbol{\theta}(\cdot)$ satisfying Condition (i) of Definition \ref{def:MM}, that 
$\big(\boldsymbol{\theta}(y),\mathbf{s}(y)\mathbf{s}'(y)\big)$ $\in \mathcal{K}(y) $  for all  $y\in \mathbb{R}^n_+$\,,
and that $\mathbf{s}(\cdot)$ and  $\mathbf{b}(\cdot):=\mathbf{s}(\cdot)\boldsymbol{\theta}(\cdot)$ are  linearly growing, i.e., \begin{equation} \label{eq:linearGrowth}
||\mathbf{s}(y)||+||\mathbf{b}(y)||\le C(1+ ||y||)\quad {\mathrm{for\ all\ }}  y\in \mathbb{R}^n_+\,, 
\end{equation}
for  some real constant $\, C>0\,$. Under this condition and for any   $x\in \mathbb{R}^n_+$\,,  the SDE \eqref{eq:SDE} with the $\sigma$ and $\vartheta$ as in \eqref{eq:Markovian}, always has a pathwise unique, strong solution starting at $x$   (\cite[Theorem 5.2.2]{FR}; \cite[p.\,8]{T}).

\smallskip
{\bf (ii)} In particular, if 
$\mathcal{K}(y)=\left\{\big(\boldsymbol{\theta}(y),\mathbf{s}(y)\mathbf{s}'(y)\big)\right\}$  for all  $y\in \mathbb{R}^n_+$ with such  $\mathbf{s}$ and  $\boldsymbol{\theta}$, then we have $\, {\mathfrak{M}}(x) = \widehat{\mathfrak{M}}(x)\neq \emptyset\,$ for all $x\in \mathbb{R}_+^n$\,.
} \qed
\end{remark} 

%%%%%%%%%%%%%%%%%%%%%%%%
\section{Interpretation and Previous Results}\label{section:previous}
%%%%%%%%%%%%%%%%%%%%%%%%

The above variables can be interpreted in a model for an equity market with $n$ assets, say stocks, as follows: 

\smallskip
\noindent
\textbf{(i)} $\mathfrak{X}(t)$ as the vector of capitalizations for the various assets $i=1, \cdots, n$ at time $t\,$, and  
\begin{equation}\label{eq:X(t)}
X(t)\,:=\,\sum_i X_i(t)
\end{equation}
as the total capitalization at that time; \\ \textbf{(ii)} $W(\cdot)$ as the vector of independent factors (sources of randomness)  in the resulting model; \\ 
\textbf{(iii)} $\sigma_{ik}(t,\mathfrak{X}), \, k=1, \cdots, n\,$ as the local volatilities for the $i$th asset at time $\,t\,$; \\ 
\textbf{(iv)} $\alpha_{ij}(t,\mathfrak{X})$\label{p:alpha_{ij}} as the local covariation rate between assets $i$ and $j$ at time $\,t\,$; \\ 
\textbf{(v)} $~\vartheta(t,\mathfrak{X})$ as the vector of local market prices of risk at time $\,t\,$; and \\ 
\textbf{(vi)} \label{p:beta}$\beta(t,\mathfrak{X}):=(\sigma\vartheta)(t,\mathfrak{X})$ as the vector of local rates of return at time $t\,$\,.

%%%%%%%%%%%%%%%%%%%%%%
\subsection{Investment Rules and Portfolios}
%%%%%%%%%%%%%%%%%%%%%%

Consider now an investor who is ``small", in the sense
that his actions have no effect on market prices. Starting with initial fortune $v > 0$\label{p:v}, he uses a rule that invests a proportion ${\it \Pi}_i (t,\mathfrak{X})$\label{p:Pi_i} of current wealth in the $i$-th asset of the equity market at time $t \in [0,\infty)$ $(i = 1, \dots, n)$, and holds the remaining proportion in cash -- or equivalently  in a zero-interest money market. 

\smallskip
We shall call {\it investment rule}  a progressively measurable functional  \label{p:P}${\it \Pi}=({\it \Pi}_1,\,\cdots,{\it \Pi}_n)':  [0,\infty)\times\Omega \to \mathbb{R}^n$\label{p:Pi}   satisfying
\begin{equation}\label{eq:Pi} 
\int_0^T  \big(\left|{\it \Pi}^\prime(t,\omega) \sigma  (t,\omega)\vartheta(t,\omega)\right| + {\it \Pi}^\prime(t,\omega) \alpha (t,\omega) {\it \Pi}(t,\omega)\big)\,\mathrm{d}t <\infty\ \ \quad {\mathrm{for\ all\ }} \,\,T \in(0,\infty)\,,\ \omega\in \Omega\,,
\end{equation}
and  denote by $ \mathfrak{P}$  the set of all such (nonanticipative) investment rules. 

\smallskip
We shall call an investment rule ${\it \Pi}$ {\it bounded}\label{p:bdd}, if ${\it \Pi}$ is bounded uniformly on $[0,\infty)\times \Omega$\,; for a bounded investment rule, the requirement \eqref{eq:Pi} is satisfied automatically,
on the strength of \eqref{eq:integrability}. 

\smallskip
We shall call an investment rule ${\it \Pi}$ a {\it portfolio,}\label{p:portfolio} if $\sum_i {\it \Pi}_i=1$ on $[0,\infty)\times \Omega$\,; in other words, if it never invests, in or borrows
from, the money market. 
We shall call a portfolio ${\it \Pi}$ {\it long-only}\label{p:long} if ${\it \Pi}_i\ge 0$\,, $i=1,\dots,n$ also holds on this domain, that is, it never sells any stock short. A long-only portfolio is also bounded, since it satisfies $0\le {\it \Pi}_i\le 1$\,, $i=1,2,\dots,n$.

\medskip
\noindent
$\bullet~$ 
Given an initial wealth $v$,  an investment rule ${\it \Pi}$ and an admissible
model $\mathcal{M}\in \mathfrak{M}(x)$, the resulting wealth process $Z(\cdot):=Z^{\,v,{\it \Pi}}(\cdot)$
satisfies the initial condition 
$Z(0)=v $ and 
\begin{equation}\label{eq:Z}
\frac{\mathrm{d}Z(t)}{Z(t)}
= \sum_{i}  {\it \Pi}_i(t,\mathfrak{X}) \frac{\mathrm{d}X_i(t)}{X_i(t)}
= {\it \Pi}'(t,\mathfrak{X})\sigma(t,\mathfrak{X}) \left[\vartheta(t,\mathfrak{X})\, \mathrm{d} t+ \mathrm{d}W(t) \right] \,,\quad \mathrm{by}\ \eqref{eq:SDE}\,.
\end{equation}

%%%%%%%%%%%%%%%%%%%%%%
\subsection{The Market Portfolio}
%%%%%%%%%%%%%%%%%%%%%%

In the special case with 
\begin{equation}\label{eq:mu}
{\it \Pi}_i(t,\omega)\,\equiv \, {\mu}_i(t,\omega)\,:=\, \frac{\omega_i(t)}{ \,\omega_1(t) + \cdots + \omega_n(t)\,} \,, \qquad \forall \,\,\,\, i=1,  \cdots, n\,, \,\,\,0 \le t<\infty\,,
\end{equation}
we have  ${\mu}_i(\cdot\,,\mathfrak{X})=X_i(\cdot)/X(\cdot)$: the resulting   strategy $\, \mu\,$ invests in all stocks    in proportion to their relative market weights. We call the resulting strategy ${\it \Pi} \equiv \mu $ the  (long-only) {\it market portfolio\label{p:mu}.} It follows from the first equality in the dynamics \eqref{eq:Z} that investing according to the market portfolio 
amounts to owning the entire market, in proportion of course to the initial wealth:  $Z^{\,v,\mu}(\cdot)=vX(\cdot)/X(0)$.  

%%%%%%%%%%%%%%%%%%%%%%
\subsection{The Arbitrage Function}
%%%%%%%%%%%%%%%%%%%%%%

With these ingredients in place, we define the {\em{arbitrage function}}\label{p:arbitrageFn}
$\mathfrak{u}: [0,\infty)\times \mathbb{R}_{+}^n \to (0,1]$,  as
\begin{eqnarray}\label{eq:u}
&\ \ \  &  \mathfrak{u}(T, x) := \inf \left\{r >0 : \exists\, {\it \Pi} \in \mathfrak{P}\  \text{s.t.}\  \mathbb{P}^{\mathcal{M}} \left[Z^{\,rX^{\mathcal{M}}(0),{\it \Pi}}(T) \ge X^{\mathcal{M}}(T) \right]= 1\,, \,\,\forall\, \,\mathcal{M}\in \mathfrak{M}(x)\right\}.
\end{eqnarray}
For the strict positivity of this quantity, see \eqref{eq:u>=Phi} below.   \qed

\smallskip
We call the function $\mathfrak{u}(\cdot,\cdot)$ the arbitrage function  because, for the initial configuration    $x=(x_1,\dots,x_n)'\in \mathbb{R}_+^n$ of asset capitalizations, the quantity $\mathfrak{u}(T, x)$ can be  thought of   as the smallest proportion of the initial total market capitalization $x_1+\cdots+x_n$\,, starting with which one can find a nonanticipative investment rule, whose performance matches or outperforms that of the market portfolio over the time horizon $[0,T]$,      with probability one under all admissible systems. Equivalently,   $\mathfrak{u}(T, x)$ can be  thought of    as the reciprocal of the highest return on investment relative to the market portfolio over the time horizon $[0,T]$, that can be  achieved using nonanticipative investment rules when starting with the vector $x=(x_1,\dots,x_n)' \in \mathbb{R}_{+}^n$ of initial capitalizations, and with probability one under all admissible systems. 

\smallskip
Given an admissible system $\mathcal{M}  =(\sigma,\vartheta, \mathbb{P }, W, \mathfrak{X}) \in \mathfrak{M}(x)$, we define the  stochastic discount factor $L(\cdot)$ as the associated exponential $\,\mathbb{P}-$local martingale 
\begin{equation}\label{eq:L=exp}
L(t)\, :=\, \exp\left(-\int_0^t\vartheta^\prime (s,\mathfrak{X})\, \mathrm{d}W(s) - \int_0^t \frac{1}{\,2\,} \,\big|\big|\vartheta(s,\mathfrak{X})\big|\big|^2 \mathrm{d}s\right), \quad  0\leq t < \infty\,.
\end{equation}
This process is well-defined and a strictly positive $\,\mathbb{P}-$local martingale (thus a $\,\mathbb{P}-$supermartingale), on the strength of the integrability condition \eqref{eq:integrability}; {\it but is not necessarily a $\,\mathbb{P}-$martingale}. It plays the r\^{o}le of a state-price-density or ``deflator" in the present context. We also write $L^{\mathcal{M}}(\cdot)$\label{p:L^M} for this $L(\cdot)$ under $\mathcal{M}$ when needed.

\smallskip
Assuming that $\, \widehat{\mathfrak{M}}(x)\neq \emptyset\,$ holds for all $\, x\in \mathbb{R}_+^n\,$, we consider the functions
\begin{equation}\label{eq:Phi}
\Phi(T, x) := \sup_{\mathcal{M}\in \mathfrak{M}(x)} \mathfrak{u}_{\mathcal{M}}(T, x) \qquad  {\mathrm{and}}  \qquad 
\widehat{\Phi}(T, x) := \sup_{\mathcal{M}\in \widehat{\mathfrak{M}}(x)} \mathfrak{u}_{\mathcal{M}}(T, x) 
\end{equation}
for $\, (T, x)\in [0,\infty)\times \mathbb{R}_{+}^n\,$, where 
\begin{equation}\label{eq:um}
\mathfrak{u}_{\mathcal{M}}(T, x) \,:= \,\mathbb{E}^{\mathbb{P}^{\mathcal{M}}} \left[L^{\mathcal{M}}(T)X^{\mathcal{M}}(T)\right] /\, ||x||_1
\end{equation}
(recall the $\,\ell^1$-norm $\,||x||_1=\sum_i x_i\,$) and the total capitalization \begin{equation}\label{eq:X^M(T)}
X^{\mathcal{M}}(T):=||\mathfrak{X}^{\mathcal{M}}(T)||_1=\sum_i X^{\mathcal{M}}_i(T)\,.
\end{equation}

As was shown in \cite[Section 10, pp.\,127--129]{FK09}, \cite{KS}, or \cite{R11}, the quantity  $\mathfrak{u}_{\mathcal{M}}(T, x)$ in \eqref{eq:um} is obtained by fixing an admissible system $\mathcal{M}$ in the  definition \eqref{eq:u} of $\mathfrak{u}$\,, namely,
\begin{equation}\label{eq:u_M}
\mathfrak{u}_{\mathcal{M}}(T, x) \,= \,\inf \left\{r >0 : \exists\ {\it \Pi} \in \mathfrak{P}\ \ \text{s.t.}\ \ \mathbb{P}^{\mathcal{M}} \left[Z^{\,rX^{\mathcal{M}}(0),{\it \Pi}}(T) \ge X^{\mathcal{M}}(T) \right]= 1\right\} \in ( 0,1]\,.
\end{equation}
This can be interpreted as the reciprocal of the highest return on investment over the time horizon $[0,T]$, that can be achieved relative to the market portfolio in the context of the model $\mathcal{M}$, by using nonanticipative strategies and starting with the vector $x$ of initial capitalizations. It can also can be interpreted as the {\em{arbitrage function for}} $\mathcal{M}$\label{p:u_M} in the terminology of \cite[Section 6]{FK10}, at least when $(\mathbb{P}, \mathbb{F})$-martingales can be represented as stochastic integrals with respect to the $W(\cdot)$  in \eqref{eq:SDE}.   

Since the processes $L(\cdot)$ and $X(\cdot)$ are strictly positive, so is the function $\mathfrak{u}_{\mathcal{M}}(\cdot\,,\cdot)$ for all admissible system $\mathcal{M}$. 
It then follows from the definitions \eqref{eq:u}--\eqref{eq:u_M}  that 
\begin{equation}\label{eq:u>=Phi}
1\ge \mathfrak{u}(T, x)\ge \Phi(T,x) \ge \widehat{\Phi}(T,x)>0\,,\ \ \ \ \  \forall \ (T, x)\in [0,\infty)\times \mathbb{R}_{+}^n\,.
\end{equation}

\begin{remark}\label{rmk:u<1}
{\rm 
{\bf Strong Arbitrage:}
If $ \mathfrak{u} (T,x) <1 $, then a
{\it strong arbitrage} relative to the market portfolio in the terminology of \cite[Definition 6.1]{FK09} exists on $[0,T]$ with the initial capitalizations $x$. Such strong arbitrage  is {\it robust}\label{p:robust}, that is, holds under every possible admissible system or model that might materialize.

Instances of $ \mathfrak{u} (T,x) <1 $ with $ T\in (0,\infty)$ occur when there exists a real constant $C>0 $ such that either
\begin{equation*}
\inf_{a\in \mathcal{A}(y)} \left( \sum_{i}  \frac{y_i a_{ii}}{y_1+ \cdots+ y_n}  - \sum_{i,j}  \frac{y_i y_j a_{ij}}{(y_1 + \cdots+ y_n )^2 } \right) \ge C
\end{equation*}
or
\begin{equation*}
\frac{(y_1 \cdots y_n )^{1/n}}{y_1 + \cdots+ y_n}   \cdot \inf_{a\in \mathcal{A}(y)}
\left(  \sum_{i} a_{ii}  - \frac{1}{\,n\,}  \sum_{i,j}
a_{ij}\right) \ge C
\end{equation*}
holds for every $y \in\mathbb{R}_+^n $ (recall $\mathcal{A}(\cdot)$ from \eqref{eq:A} and see   \cite[Examples 11.1, 11.2]{FK09}, \cite{FK05} and \cite{FKK}).
}  \qed
\end{remark}

\begin{remark}\label{rmk:NUPBR}
{\rm
{\bf No Unbounded Profits with Bounded Risk:}
The inequality $\mathfrak{u}(T, x)>0$ in  \eqref{eq:u>=Phi} rules out scalable arbitrage opportunities, also known as 
{\it Unbounded Profits with Bounded Risk (UPBR)}. We refer the reader to \cite{DSch} for the origin of the resulting ``No Unbounded Profit  with Bounded Risk" (NUPBR)  concept, and to \cite{KK} for an elaboration of this point in a different context, namely, the existence and properties of the so-called ``num\'{e}raire" portfolio.
}  \qed
\end{remark}

%%%%%%%%%%%%%%%%%%%%%%%
\subsection{Previous Results}
%%%%%%%%%%%%%%%%%%%%%%%

The  Knightian  uncertainty in the above model 
shares a lot  with the uncertainty regarding the underlying volatility structure of assets in \cite{Lyons}.
The approach in \cite{FK11} is reminiscent of the \textsc{Dubins-Savage} (\cite{DS}) and \textsc{Sudderth} (\cite{HOPS}, \cite{OPS}, \cite{PS}, \cite{SW}) approaches to stochastic optimization. 

The arbitrage function $\,\mathfrak{u}\,$ of \eqref{eq:u} was characterized in \cite{FK11}     as a classical solution and in fact, the smallest nonnegative classical (super)solution of the \textsc{Cauchy} problem \eqref{eq:PDE}, \eqref{eq:initial cond},  but under rather strong assumptions on the uncertainty structure (see Theorem \ref{thm:Knightian} below), which amount to: 
$\Phi\equiv \mathfrak{u}_{\mathcal{M}}$ for some strongly Markovian admissible system ${\mathcal{M}}$, and
$\mathfrak{u}_{\mathcal{M}}$ solves \eqref{eq:PDE}.

\begin{theorem}\cite[Proposition 3, Remark 2]{FK11}\label{thm:Knightian}
We have 
$\mathfrak{u}\equiv\Phi$ on $[0, \infty)\times \mathbb{R}_{+}^n$\,, and 
in fact, this function is the smallest nonnegative  (super)solution of \eqref{eq:PDE}, \eqref{eq:initial cond}, if there exists a strongly Markovian admissible system $\mathcal{M}_o$ under which EITHER:

\smallskip
\noindent
{\bf (i)}
the functions $\mathbf{s}$ and $\boldsymbol{\theta}$ of \eqref{eq:Markovian} are locally L{\scriptsize IPSCHITZ}, and

\noindent
{\bf (ii)}
the function $u(t,x):=\mathfrak{u}_{\mathcal{M}^x_o}(t,x)$, which, by \cite[Theorem 4.7]{R13}, is of class $C^{1,2}$ and solves 
\begin{equation}\label{eq:a}
\left( u_t-\mathcal{L}_{\mathbf{a}(x)}u\right) (t,x)=0\quad \mathrm{with}\quad  \mathbf{a}(x):=\mathbf{s}(x)\mathbf{s}'(x)\,, \quad (t,x)\in (0, \infty)\times\mathbb{R}_{+}^n\quad 
\end{equation} 
($\mathcal{M}^x_o:=(\mathcal{M}_o)^x$; recall Definition \ref{def:MM} for $\mathcal{M}^y$ and \eqref{eq:hatL} for $\mathcal{L}_{a}$), 
is a classical supersolution of \eqref{eq:PDE}; 

\medskip
OR  both of the following conditions hold:

\smallskip
\noindent
{\bf (i)$^\prime$} the functions $\mathbf{s}$ and $\boldsymbol{\theta}$ of  \eqref{eq:Markovian} are continuous,

\noindent
{\bf (ii)$^\prime$} 
there exists a positive constant $C$ such that\, $$\sum_{i,k}y_i|\mathbf{s}_{ik}(y)\boldsymbol{\theta}_k(y)| \,\le \,C(1+||y||)$$ holds for all $y\in \mathbb{R}_{+}^n\,,$

\noindent
{\bf (iii)$^\prime$} 
there exists a $C^2$-function $\mathbf{h}:\mathbb{R}_{+}^n \to \mathbb{R}$ such that $\,\boldsymbol{\theta}_k(y)=\sum_i y_i \mathbf{s}_{ik}(y)D_i\mathbf{h}(y)$,  $\,k=1,\dots,n\,,$  

\noindent
{\bf (iv)$^\prime$} the function 
$$\mathscr{G}(t,x)\,:=\, \mathbb{E}^{\mathbb{P}^{\mathcal{M}^x_o}}\left[\mathscr{F}(\mathfrak{X}(t))\exp\left( \int_0^t \mathscr{K} \big(\mathfrak{X}(t) \big) \right)  \right] \in C\left( [0,\infty)\times \mathbb{R}_{+}^n\right) \cap C^{1,2}\left( (0,\infty)\times \mathbb{R}_{+}^n\right), $$
where
$$\mathscr{F}(y)\,:=\,\frac{1}{\,2\,}\sum_{i,j} \mathbf{a}_{ij}(y)\left[ D^2_{ij}\mathbf{h}+D_i\mathbf{h}\cdot D_j\mathbf{h}\right](y) \qquad \text{ and } \qquad \mathscr{K}(y)\,:=\,||y||_1\exp \big(-\mathbf{h}(y)\big)\, , $$
and

\smallskip
\noindent
{\bf (v)$^\prime$} the function $ \mathscr{U}(t,x)\,:=\,\mathscr{G}(t,x)\,/\mathscr{F}(x)$ is a classical supersolution of \eqref{eq:PDE}.
\end{theorem}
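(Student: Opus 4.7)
My strategy rests on a \emph{domination principle}: any nonnegative classical supersolution $v\in C\bigl([0,\infty)\times\mathbb{R}_{+}^n\bigr) \cap C^{1,2}\bigl((0,\infty)\times\mathbb{R}_{+}^n\bigr)$ of the \textsc{Cauchy} problem \eqref{eq:PDE}--\eqref{eq:initial cond} satisfies $\mathfrak{u}(T,x) \le v(T,x)$ pointwise. Granted this, under condition~(ii) the hypothesis that $u$ is such a supersolution yields the sandwich
\[
\mathfrak{u}(T,x)\,\le\,u(T,x)\,=\,\mathfrak{u}_{\mathcal{M}_o^x}(T,x)\,\le\,\Phi(T,x)\,\le\,\mathfrak{u}(T,x)
\]
via \eqref{eq:Phi}--\eqref{eq:u>=Phi}, forcing equality throughout; the principle applied to any other nonnegative supersolution then delivers the minimality claim. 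Under conditions~(i)'--(v)' the same scheme applies once $\mathscr{U}$ is identified with $u$.

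To prove the domination principle, I would first establish the weaker bound $\Phi\le v$. Fix $\mathcal{M}\in\mathfrak{M}(x)$, set $V(t):=v(T-t,\mathfrak{X}(t))$, and apply It\^o's formula to
\[
\mathcal{Z}(t)\,:=\,L(t)\,V(t)\,X(t),\qquad 0\le t\le T,
\]
using \eqref{eq:SDE}, \eqref{eq:L=exp} and the local-martingale property of $L\,X$ under $\mathbb{P}$. After cancellation of drift terms the dynamics reduce to
\[
d\mathcal{Z}(t)\,=\,L(t)\,X(t)\,\bigl[\mathcal{L}_{\alpha(t,\mathfrak{X})}v\,-\,v_t\bigr](T-t,\mathfrak{X}(t))\,dt\,+\,dN(t),
\]
where $N$ is a local $\mathbb{P}$-martingale. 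Since \eqref{eq:integrability}, \eqref{eq:A} force $\alpha(t,\mathfrak{X})\in\mathcal{A}(\mathfrak{X}(t))$, and $v_t\ge\widehat{\mathcal{L}}v\ge\mathcal{L}_{\alpha(t,\mathfrak{X})}v$ by the supersolution property, this drift is nonpositive; hence $\mathcal{Z}\ge 0$ is a local supermartingale and therefore a true supermartingale. Taking expectations at $t=T$ and using $V(T)=v(0,\cdot)=1$ gives $\mathbb{E}^{\mathbb{P}^{\mathcal{M}}}[L(T)X(T)]\le v(T,x)\,\|x\|_1$, whence \eqref{eq:um} yields $\mathfrak{u}_{\mathcal{M}}(T,x)\le v(T,x)$; supremizing over $\mathcal{M}$ gives $\Phi\le v$.

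To upgrade from $\Phi\le v$ to $\mathfrak{u}\le v$, I would exhibit a \emph{single} investment rule $\Pi^v\in\mathfrak{P}$, independent of $\mathcal{M}$, that super-replicates $X^{\mathcal{M}}(T)$ from initial wealth $v(T,x)\,\|x\|_1$ under every admissible system simultaneously. Reading the It\^o calculation above in reverse, I would take as candidate the self-financing trading strategy that holds $\eta_i(t):=v(T-t,\mathfrak{X}(t))+X(t)\,D_i v(T-t,\mathfrak{X}(t))$ shares of asset $i$ $(i=1,\ldots,n)$, with cash in the money market making up the balance in the wealth $Z^{v(T,x)\|x\|_1,\Pi^v}(\cdot)$, and corresponding wealth proportions $\Pi^v_i(t)=\eta_i(t)\,X_i(t)\big/Z^{v(T,x)\|x\|_1,\Pi^v}(t)$. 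A direct computation then yields the pathwise decomposition
\[
Z^{v(T,x)\|x\|_1,\Pi^v}(t)\,=\,v(T-t,\mathfrak{X}(t))\,X(t)\,+\,A(t),
\]
where $dA(t)=X(t)\,[v_t-\mathcal{L}_{\alpha(t,\mathfrak{X})}v](T-t,\mathfrak{X}(t))\,dt\ge 0$ and $A(0)=0$, so $A(\cdot)\ge 0$ is continuous, nondecreasing and adapted. Since the $\eta_i$ depend only on $v$ and on the canonical process $\mathfrak{B}$, $\Pi^v$ is genuinely model-free, and the pathwise inequality $Z^{v(T,x)\|x\|_1,\Pi^v}(T)\ge v(0,\mathfrak{X}(T))\,X(T)=X(T)$ yields $\mathfrak{u}(T,x)\le v(T,x)$ by definition \eqref{eq:u}.

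Under (i)--(ii) the theorem is immediate with $v:=u$. Under (i)'--(v)' it remains to identify $\mathscr{U}$ with $u$: the gradient-potential structure~(iii)' permits a \textsc{Girsanov}-type exponential change of variables under $\mathbb{P}^{\mathcal{M}_o^x}$ that rewrites $\mathbb{E}[L(T)X(T)]/\|x\|_1$ as the \textsc{Feynman}--\textsc{Kac} quotient $\mathscr{G}/\mathscr{F}$, with (ii)' controlling the linear growth of $\sigma\vartheta$ and (iv)' ensuring the requisite regularity; then (v)' provides the supersolution property, and the domination principle applied to $v:=\mathscr{U}$ closes the sandwich. The main obstacle I anticipate is the verification of the model-free integrability \eqref{eq:Pi} of $\Pi^v$ uniformly across $\mathcal{M}\in\mathfrak{M}(x)$, together with the nondecreasing nature of the residual $A$; this is the \textsc{Knightian} analog of the optional decomposition theorem in mathematical finance, and it is precisely where the classical $C^{1,2}$-regularity of $v$ is exploited decisively.
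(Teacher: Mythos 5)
The paper itself does not prove Theorem~\ref{thm:Knightian}; it cites \cite[Proposition~3, Remark~2]{FK11}. What the paper \emph{does} prove is the domination principle you rest your argument on, namely Theorem~\ref{thm:min}, with its proof split into Theorems~\ref{thm:U>=Phi} and~\ref{thm:U>=u} in Section~\ref{appendix:min}. Your first step ($\Phi\le v$ via the supermartingale $\Xi(t)=L(t)X(t)v(T-t,\mathfrak{X}(t))$) matches Theorem~\ref{thm:U>=Phi} exactly, and your sandwich $\mathfrak{u}\le u=\mathfrak{u}_{\mathcal{M}_o^x}\le\Phi\le\mathfrak{u}$ under condition~(ii) is correct.

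The gap is in your upgrade $\mathfrak{u}\le v$. You specify the strategy by the number of shares $\eta_i(t)=v(T-t,\mathfrak{X}(t))+X(t)D_iv(T-t,\mathfrak{X}(t))$ and then set $\Pi^v_i=\eta_iX_i/Z$, asserting that $\Pi^v$ is model-free because the $\eta_i$ are. This is false. Your own decomposition gives
\[
Z(t)\;=\;v(T-t,\mathfrak{X}(t))\,X(t)\,+\,A(t),\qquad A(t)=\int_0^t X(s)\big[v_t-\mathcal{L}_{\alpha(s,\mathfrak{X})}v\big](T-s,\mathfrak{X}(s))\,\mathrm{d}s,
\]
and the residual $A$ depends on the covariation functional $\alpha=\sigma\sigma'$ of the particular admissible system $\mathcal{M}$. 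Hence $Z$, and with it the proportions $\Pi^v_i=\eta_iX_i/Z$, is \emph{not} a functional of $(t,\omega)$ alone; it varies with $\mathcal{M}$, so $\Pi^v\notin\mathfrak{P}$ in the sense the definition~\eqref{eq:u} demands (the same rule must super-replicate under \emph{every} $\mathcal{M}\in\mathfrak{M}(x)$). Your concluding paragraph flags this as an anticipated obstacle about ``model-free integrability,'' but it is deeper than integrability: the rule is not even well-defined as an element of $\mathfrak{P}$. The paper's Theorem~\ref{thm:U>=u} circumvents this by specifying the \emph{proportions} directly through \eqref{eq:pi^U}, $\pi^U_i(t,\omega)=\omega_i(t)D_i\log U(T-t,\omega(t))+\mu_i(t,\omega)$, a genuinely path-dependent, model-free formula; one then compares $\mathrm{d}\log\big(L(t)Z^{v,\pi^U}(t)\big)$ with $\mathrm{d}\log\Xi(t)$ and observes that the martingale parts coincide by construction while the drift of the former dominates, so $Z^{v,\pi^U}(T)\ge X(T)$ pathwise. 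Your share-based strategy agrees with $\pi^U$ only when $Z\equiv XU$ (i.e.\ when $U$ is an exact solution so $A\equiv 0$); for a strict supersolution the two differ by the model-dependent factor $Z/(XU)>1$. Your worry about the sign of $A$, by contrast, is unfounded: the supersolution inequality $v_t\ge\widehat{\mathcal{L}}v\ge\mathcal{L}_\alpha v$ makes $A$ nondecreasing automatically. Finally, your identification $\mathscr{U}\equiv u$ under (i)$'$--(v)$'$ is only sketched; that identification is the substance of \cite[Remark~2]{FK11} and is not reproduced here.
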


A natural question to ask then, is whether the arbitrage function $\mathfrak{u}$ of \eqref{eq:u} is still a solution to \eqref{eq:PDE},   
perhaps in some weak or generalized sense, when regularity and other conditions are weakened. The answer turns out to be affirmative, though it is somewhat indirect; it is provided in Theorems \ref{thm:viscosity1}, \ref{thm:viscosity2} and Corollary \ref{coro:u sol} below.

%%%%%%%%%%%%%%%%%%%%%%%%%%%%%
\section{Viscosity Characterizations of the functions ${\Phi}$ and $\widehat{\Phi}$}
\label{section:visc}
%%%%%%%%%%%%%%%%%%%%%%%%%%%%%

We first recall from \cite{CIL} the definition of viscosity (sub/super)solutions  for a second-order parabolic partial differential equation, and then state our main results with a  discussion of related results. 

\subsection{Viscosity (Super/sub)solution of a Second-order Parabolic PDE}

Let $\mathcal{O}$\label{p:O} be an open subset of $\mathbb{R}^n$, let $\mathbb{S}(n)$\label{p:S(n)} be the set of $n\times n$ real symmetric matrices,  and consider a continuous, real-valued mapping $(t,x,r,p,q) \mapsto F(t,x,r,p,q)$\label{p:F}  defined on $(0,\infty)\times \mathcal{O}\times \mathbb{R}\times \mathbb{R}^n\times \mathbb{S}(n)$ and satisfying the {\it ellipticity condition}
\begin{equation}\label{eq:ellipticity}
\ F(t,x,r,p,q_1) \leq F(t,x,r,p,q_2) {\mathrm{ \ \ whenever\ }} \ q_1 \geq q_2\,,\quad {\mathrm{for\ all\ }}(t,x,r,p) \in (0,\infty)\times \mathcal{O}\times \mathbb{R}\times \mathbb{R}^n.
\end{equation}

Consider the second-order parabolic partial differential equation
\begin{equation}\label{eq:F}
u_t + F\left(t,x,u(t,x),Du(t,x),D^2 u(t,x)\right) = 0\,,\quad (t,x)\in (0,\infty)\times \mathcal{O}
\end{equation}
with the gradient $Du=(u_{x_1}, u_{x_2}, \dots, u_{x_n})'$ and the Hessian $D^2 u=(u_{x_i x_j})_{n\times n}$\label{p:Du}\,.

\begin{definition}\label{def:viscosity sol}
{\bf Viscosity Solution:}
{\bf (i)} We say that a function $\,u :(0,\infty) \times \mathcal{O}\to \mathbb{R}\,$ is a {\bf viscosity subsolution}  of the equation \eqref{eq:F}, if
\begin{equation}\label{eq:F<=0}
\varphi_t + F\left(t_0,x_0,u^*(t_0,x_0),D\varphi(t_0,x_0),D^2 \varphi(t_0,x_0)\right)\leq 0
\end{equation}
holds for all $(t_0,x_0)\in (0,\infty) \times \mathcal{O}$ and test functions $\varphi \in C^{1,2}\left( (0,\infty) \times \mathcal{O}\right) $ such that $(t_0,x_0)$ is a (strict) (local) maximum of $\,u^*-\varphi\,$ on $(0,\infty) \times \mathcal{O}$. We have denoted here by 
\begin{equation}\label{eq:u^*}
u^*(t,x) := \limsup_{(s,y)\rightarrow (t,x)} u(s,y),\ \ \ \ (t,x)\in (0,\infty)\times \mathcal{O}
\end{equation}
the {\em upper-semicontinuous envelope} of $u$, i.e., the smallest upper-semicontinuous function that dominates pointwise the function $u$.

{\bf (ii)} Similarly, we  say that  $\,u :(0,\infty) \times \mathcal{O}\to \mathbb{R}\,$ is a {\bf viscosity supersolution}  of  \eqref{eq:F}, if
\begin{equation}\label{eq:F>=0}
\varphi_t + F\left(t_0,x_0,u_*(t_0,x_0),D\varphi(t_0,x_0),D^2 \varphi(t_0,x_0)\right)\geq 0
\end{equation}
holds for all $(t_0,x_0)\in (0,\infty) \times \mathcal{O}$ and test functions $\varphi  \in C^{1,2}\left( (0,\infty) \times \mathcal{O}\right) $ such that $(t_0,x_0)$ is a (strict) (local) minimum of $\,u_*-\varphi\,$  on $(0,\infty) \times \mathcal{O}$. We have denoted here by 
\begin{equation}\label{eq:u_*}
u_*(t,x) : =  \liminf_{(s,y)\rightarrow (t,x)} u(s,y),\ \ \ \ (t,x)\in (0,\infty)\times \mathcal{O}
\end{equation}   
the {\em lower-semicontinuous envelope} of $u$, i.e., the largest lower-semicontinuous function dominated pointwise by the function $u$.

{\bf (iii)} Finally, we say that $\,u :(0,\infty) \times \mathcal{O}\to \mathbb{R}\,$ is a {\bf viscosity solution}  of \eqref{eq:F}, if it is both a viscosity subsolution and a viscosity supersolution of this equation.
\end{definition}

\begin{remark}\label{rmk:u*}
{\rm 
The above definition implies that  $u$ is a viscosity subsolution (supersolution) of \eqref{eq:F} if and only if $u^*$ ($u_*$) is a viscosity subsolution (supersolution)  of this equation.
} \qed
\end{remark}

\subsection{Main Results}

In our setting we have $\mathcal{O}\,=\,\mathbb{R}_{+}^n$ and 
\begin{equation} \label{eq:ourF}
F(t,x,r,p,q) \,=\, - \sup_{a\in \mathcal{A}(x)} \left( \sum_{i,j}  x_i x_j a_{ij} \left(\frac{q_{ij}}{2} + \frac{p_i}{||x||_1}\right)\right)\    
\end{equation}  
for $\,q=(q_{ij})_{1\le i,j\le n}\,,\ p=(p_1,\dots,p_n)',$ and thus the left-hand sides of \eqref{eq:F<=0} and \eqref{eq:F>=0} simplify to $\big(\varphi_t-\widehat{\mathcal{L}}\varphi \big)(t_0,x_0)$ in the notation of \eqref{eq:hatL}. 

Since each matrix  $a$ in the collection $ \mathcal{A}(x)$ of \eqref{eq:A} is positive-definite, we deduce that the matrix $(x_i x_j a_{ij})_{n\times n}$ $ = x^\prime a x$ is always positive-definite, and hence $F$ satisfies the ellipticity condition \eqref{eq:ellipticity}. 

In the results that follows, we shall also need $F$ to be a continuous mapping, as well as the following conditions:

\begin{asmp} {\bf Local Boundedness:} 
\label{asmp1}
 The collection $\mathbb{K}$ of \eqref{eq:K}
is {\rm locally bounded}  on $\mathbb{R}_{+}^n\,$; that is, for any $x\in \mathbb{R}_{+}^n$\,, there exists a neighborhood $\mathcal{D}(x)\subset \mathbb{R}_{+}^n$\label{p:D(x)} of $x$ such that $\,\bigcup_{y\in \mathcal{D}(x)}\mathcal{K}(y)$ is bounded. 
\end{asmp}

\begin{asmp} {\bf  Continuity:} 
\label{asmp2}{\rm 
For any $\iota>0$\,, $x\in \mathbb{R}_{+}^n$ and $a=\left(a_{ij} \right)_{1\le i,j\le n}\in \mathcal{A}(x)$, there exist a positive number $\delta<\iota$ and locally \textsc{Lipschitz} functions $\mathbf{s}: \mathbb{R}_+^n\to {\mathrm{GL}}(n)$ and  $\boldsymbol{\theta}: \mathbb{R}^n_+\to \mathbb{R}^n$ such that $\mathbf{s}(\cdot)$ and  $\mathbf{b}(\cdot):=\mathbf{s}(\cdot)\boldsymbol{\theta}(\cdot)$ are  linearly growing (i.e., satisfy the condition  \eqref{eq:linearGrowth}), and that for $\mathbf{a}(\cdot)=\left(\mathbf{a}_{ij}(\cdot) \right)_{1\le i,j\le n} :=\mathbf{s}(\cdot)\mathbf{s}'(\cdot)$, we have
$\big(\boldsymbol{\theta}(y),\mathbf{a}(y)\big)$ $\in \mathcal{K}(y) $  for all  $y\in \mathbb{R}^n_+$ and 
\begin{equation} \label{eq:continuous a}
|\mathbf{a}_{ij}(y) - a_{ij}|<\iota\,,\quad  1\le i,j\le n\,, \quad {\mathrm{for\ all}}\  y\in B_{\delta}(x)\,.
\end{equation} 
}
\end{asmp}

\noindent
{\it Remark:}
All of the conditions in Assumption \ref{asmp2}, except for \eqref{eq:continuous a}, are inspired by Remark \ref{rmk:nonemptyM}. The aim  is  to guarantee the existence of an admissible system with the functional $\sigma(t,\omega)=\mathbf{s}(\omega(t))$, as in \eqref{eq:Markovian}. \qed

\smallskip
We have then the following results.
 
\begin{theorem}\label{thm:viscosity1}
{\bf Viscosity Subsolution:} Suppose that the real-valued function $F$ of \eqref{eq:ourF} is continuous on $(0,\infty) \times \mathbb{R}_+^n\times \mathbb{R} \times \mathbb{R}^n\times \mathbb{S}(n)$, and that Assumption \ref{asmp1} holds.   

The function $\widehat{\Phi}$ of \eqref{eq:Phi} is then a viscosity subsolution of the HJB equation \eqref{eq:PDE}, and thus a viscosity subsolution of the  C{\scriptsize AUCHY} problem \eqref{eq:PDE}, \eqref{eq:initial cond},  since it satisfies $\widehat{\Phi}(0,\cdot)=1$.
\end{theorem}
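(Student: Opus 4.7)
The approach is a contradiction argument tailored to the fact that $\widehat\Phi$ is defined as a supremum over strongly Markovian admissible systems. \textbf{Setup.} Suppose, toward a contradiction, that $\widehat\Phi$ fails to be a subsolution at some $(t_0,x_0)$: there exist a test function $\varphi\in C^{1,2}$ and $\eta>0$ such that $\widehat\Phi^*-\varphi$ has a strict local maximum at $(t_0,x_0)$, normalized so that $\widehat\Phi^*(t_0,x_0)=\varphi(t_0,x_0)$, yet $(\varphi_t-\widehat{\mathcal L}\varphi)(t_0,x_0)\ge 4\eta$. The continuity of $F$ and the smoothness of $\varphi$ deliver an open set $U:=(t_0-\rho,t_0+\rho)\times B_\rho(x_0)\subset(0,\infty)\times\mathbb{R}_+^n$ on which both $\widehat\Phi^*\le\varphi$ and
\[
\varphi_t(t,y)-\mathcal L_a\varphi(t,y)\ge 3\eta\qquad\text{for every $(t,y)\in U$ and every $a\in\mathcal A(y)$.}
\]

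\textbf{Dynamic programming along near-optimal Markovian models.} Pick $(t_k,x_k)\to(t_0,x_0)$ with $\widehat\Phi(t_k,x_k)\to\widehat\Phi^*(t_0,x_0)$ and, for each $k$, choose $\mathcal M_k\in\widehat{\mathfrak M}(x_k)$ with Markovian coefficients $(\mathbf s_k,\boldsymbol\theta_k)$ such that $\mathfrak u_{\mathcal M_k}(t_k,x_k)\ge\widehat\Phi(t_k,x_k)-1/k$, and write $L_k:=L^{\mathcal M_k}$, $\mathfrak X_k:=\mathfrak X^{\mathcal M_k}$, $X_k:=\|\mathfrak X_k\|_1$, $\mathbf a_k:=\mathbf s_k\mathbf s_k'$. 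Exploiting the strong Markov property of $\mathfrak X_k$ (Definition~\ref{def:MM}) together with the representation \eqref{eq:L=exp} of $L_k$ as an exponential functional of the Markovian path, one derives for every bounded stopping time $\tau\le t_k$ the identity
\[
\|x_k\|_1\mathfrak u_{\mathcal M_k}(t_k,x_k)=\mathbb{E}\big[L_k(\tau)X_k(\tau)\,\mathfrak u_{\mathcal M_k^{\mathfrak X_k(\tau)}}(t_k-\tau,\mathfrak X_k(\tau))\big].
\]
Because $\mathcal M_k^y\in\widehat{\mathfrak M}(y)$ gives $\mathfrak u_{\mathcal M_k^y}(s,y)\le\widehat\Phi^*(s,y)\le\varphi(s,y)$ for $(s,y)\in U$, taking $\tau=\tau_k$ to be an exit time from $U$ yields
\[
\|x_k\|_1\mathfrak u_{\mathcal M_k}(t_k,x_k)\le\mathbb{E}\big[L_k(\tau_k)X_k(\tau_k)\varphi(t_k-\tau_k,\mathfrak X_k(\tau_k))\big].
\]

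\textbf{It\^o's formula.} A direct calculation from \eqref{eq:SDE} and \eqref{eq:L=exp} shows that $L_kX_k$ is a positive local martingale. Applying It\^o to $N_k(s):=L_k(s)X_k(s)\varphi(t_k-s,\mathfrak X_k(s))$, the drift terms that contain $\boldsymbol\theta_k$ cancel and the remainder collapses to $L_k(s)X_k(s)\bigl(\mathcal L_{\mathbf a_k(\mathfrak X_k(s))}\varphi-\varphi_t\bigr)(t_k-s,\mathfrak X_k(s))$, which on $[0,\tau_k]$ is bounded above by $-3\eta L_k(s)X_k(s)$ since $\mathbf a_k(y)\in\mathcal A(y)$. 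Taking expectations, with a suitable localization of the martingale part, and combining with the previous inequality yields
\[
\mathfrak u_{\mathcal M_k}(t_k,x_k)\le\varphi(t_k,x_k)-\frac{3\eta}{\|x_k\|_1}\,\mathbb{E}\!\left[\int_0^{\tau_k}L_k(s)X_k(s)\,ds\right].
\]

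\textbf{Main obstacle.} To produce the contradiction after letting $k\to\infty$, one must bound the expected integral below by a \emph{positive constant uniform in $k$}. To this end, I refine $\tau_k$ to include the exit time of $L_k$ from a fixed interval $[1/M,M]$, so that on $[0,\tau_k]$ the product $L_kX_k$ lies in $[c_0/M,CM]$, the lower bound for $X_k$ coming from the compactness of $\overline{B_\rho(x_0)}\subset\mathbb{R}_+^n$. Assumption~\ref{asmp1} supplies a bound on $\mathbf a_k$ and $\mathbf b_k:=\mathbf s_k\boldsymbol\theta_k$ uniform over $B_\rho(x_0)$, and a standard Doob/Chebyshev estimate then yields $\mathbb P[\tau_k\ge h]\ge 1/2$ for some $h>0$ independent of $k$; the same bound simultaneously justifies treating the martingale part of the It\^o identity as a true martingale. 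The resulting contradiction $\widehat\Phi^*(t_0,x_0)\le\varphi(t_0,x_0)-c\eta$ completes the argument. The two technically delicate points, and where the bulk of the work concentrates, are (i) making the strong-Markov DPP identity rigorous despite $L_kX_k$ being only a local martingale, and (ii) extracting the uniform lower bound on $\mathbb P[\tau_k\ge h]$ from Assumption~\ref{asmp1}.
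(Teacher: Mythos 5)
Your proposal is correct and follows the same overall strategy as the paper: argue by contradiction, pick near-optimal \emph{strongly Markovian} admissible systems from the supremum defining $\widehat\Phi$, invoke the strong-Markov/martingale identity (the paper's Proposition~\ref{prop:martingale}), apply It\^o (the paper's Lemma~\ref{lemma:Ito}), and estimate a stopping-time probability via BDG/Chebyshev.

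The one genuine tactical difference is where the quantitative contradiction comes from. The paper does \emph{not} attempt to lower-bound $\mathbb{E}\!\left[\int_0^{\rho}L(s)X(s)\,ds\right]$; it uses the It\^o drift only to get strict inequality in \eqref{eq:E[LXphi]}, and then extracts the quantitative gap from the boundary constant $C_2=-\max_{\partial\mathcal D_\delta}(\widehat\Phi^*-\varphi)>0$, combined with $\mathbb{P}[\rho=\nu]\ge 1/2$ (a single BDG/Chebyshev bound on $\sup_{s\le\nu}|\log L(s)|$, Lemma~\ref{lemma:rho=nu}). You instead take the drift bound $\varphi_t-\mathcal L_a\varphi\ge 3\eta$ quantitatively and need a uniform lower bound on $\mathbb{E}[\tau_k]$, which forces two separate exit-time estimates (one for $\log L_k$, one for $\mathfrak X_k$ leaving $B_\rho(x_0)$), both of which must be uniform in $k$ because Assumption~\ref{asmp1} bounds the coefficients independently of the choice of admissible system. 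Both routes close; the paper's is slightly more economical because it never needs the $\mathfrak X_k$-exit-time estimate or the constant $h$, while yours is arguably more transparent since the source of the contradiction (the interior strict drift) is the PDE inequality itself rather than the boundary gap. Your handling of the two ``technically delicate points'' is also sound; they correspond exactly to the paper's Proposition~\ref{prop:martingale} (the strong-Markov/optional-sampling argument, which is a true-martingale statement because $L(t)X(t)\mathfrak u_{\mathcal M^{\mathfrak X(t)}}(T-t,\mathfrak X(t))=\mathbb{E}[L(T)X(T)\mid\mathcal F(t)]$ with $L(T)X(T)$ integrable) and to Lemma~\ref{lemma:rho=nu}.
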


\begin{theorem}\label{thm:viscosity2}
{\bf Viscosity Supersolution:}  
Suppose that the real-valued function $F$ of \eqref{eq:ourF} is continuous on $(0,\infty) \times \mathbb{R}_+^n\times \mathbb{R} \times \mathbb{R}^n\times \mathbb{S}(n)$, and that Assumptions \ref{asmp1} and \ref{asmp2} hold.   
 
The function $\Phi$ of \eqref{eq:Phi} is then a viscosity supersolution of the HJB equation \eqref{eq:PDE}, and thus a viscosity supersolution of the C{\scriptsize AUCHY} problem \eqref{eq:PDE}, \eqref{eq:initial cond}, since  it satisfies $\Phi(0,\cdot)=1$.
\end{theorem}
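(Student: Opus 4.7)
My proposed proof proceeds by contradiction, combining Itô's formula under a well-chosen strongly Markovian admissible system (supplied by Assumption \ref{asmp2}) with a sub-dynamic-programming inequality for $\Phi$. Suppose the supersolution property fails at a strict local minimum $(t_0,x_0)$ of $\Phi_*-\varphi$, normalized so that $\Phi_*(t_0,x_0)=\varphi(t_0,x_0)$ and $\varphi_t(t_0,x_0)-\widehat{\mathcal L}\varphi(t_0,x_0)<0$. Because $\mathcal A(x_0)$ is compact (as the projection of the compact set $\mathcal K(x_0)$) and $a\mapsto\mathcal L_a\varphi(t_0,x_0)$ is continuous, there exist $a\in\mathcal A(x_0)$ and $\varepsilon>0$ with $-\varphi_t(t_0,x_0)+\mathcal L_a\varphi(t_0,x_0)>2\varepsilon$. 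Applying Assumption \ref{asmp2} to $a$ with sufficiently small $\iota$, and using continuity of the derivatives of $\varphi$ as well as of the resulting $\mathbf a(\cdot)$, I pick locally Lipschitz, linearly growing $\mathbf s,\boldsymbol\theta$ with $(\boldsymbol\theta,\mathbf a)\in\mathcal K$ and a compact neighborhood $\bar{\mathcal U}\subset(0,\infty)\times\mathbb R_+^n$ of $(t_0,x_0)$ on which
\[
-\varphi_t(t,y)+\mathcal L_{\mathbf a(y)}\varphi(t,y)\,>\,\varepsilon.
\]
Remark \ref{rmk:nonemptyM} then produces a family $\{\mathcal M^y\}_{y\in\mathbb R_+^n}$ of strongly Markovian admissible systems starting at each $y$ and driven by $(\mathbf s,\boldsymbol\theta)$.

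For each $(t,y)$ in the interior of $\bar{\mathcal U}$, let $\tau=\tau^{t,y}$ be the (bounded) first exit time of $(t-s,\mathfrak X^y(s))$ from $\bar{\mathcal U}$. A direct Itô computation (using $\mathrm d(L X_i)=LX_i(\sigma_i-\vartheta')\mathrm dW$ and symmetry of $\mathbf a$) shows that the drift of $L(s)X(s)\varphi(t-s,\mathfrak X^y(s))$ equals $L(s)X(s)[-\varphi_t+\mathcal L_{\mathbf a(\mathfrak X(s))}\varphi]$, and this exceeds $\varepsilon\,L(s)X(s)$ throughout $\bar{\mathcal U}$. By Assumption \ref{asmp1} the functionals $\sigma$ and $\vartheta$ are bounded on $\bar{\mathcal U}$, so $LX$ is an $L^2$-bounded local martingale on $[0,\tau]$ and hence a true martingale, giving $\mathbb E^{\mathbb P^{\mathcal M^y}}[L(\tau)X(\tau)]=\|y\|_1$. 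Because $(t_0,x_0)$ is a \emph{strict} local minimum of the lower-semicontinuous function $\Phi_*-\varphi$, I may shrink $\bar{\mathcal U}$ so that $\eta:=\inf_{\partial\bar{\mathcal U}}(\Phi_*-\varphi)>0$. Combining the drift estimate with $\Phi_*\geq\varphi+\eta$ at the exit point yields
\[
\mathbb E^{\mathbb P^{\mathcal M^y}}\!\bigl[L(\tau)X(\tau)\,\Phi_*(t-\tau,\mathfrak X^y(\tau))\bigr]\,\geq\,\|y\|_1\bigl(\varphi(t,y)+\eta\bigr).
\]

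The remaining ingredient is the sub-dynamic-programming inequality
\[
\|y\|_1\,\Phi(t,y)\,\geq\,\mathbb E^{\mathbb P^{\mathcal M^y}}\!\bigl[L(\tau)X(\tau)\,\Phi_*(t-\tau,\mathfrak X^y(\tau))\bigr],
\]
which I would establish as a separate lemma by pasting $\mathcal M^y$ on $[0,\tau]$ with an $\epsilon$-optimal admissible system for $\Phi(t-\tau,\mathfrak X^y(\tau))$, selected measurably in the endpoint $\mathfrak X^y(\tau)$; the concatenation remains an admissible system starting at $y$, so its $\mathfrak u$-value lower-bounds $\Phi(t,y)$, and passing $\epsilon\downarrow 0$ together with $\Phi\geq\Phi_*$ yields the displayed inequality. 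The measurable selection of the $\epsilon$-optimizers and the verification that pasting preserves membership in $\mathfrak K$ constitute the main technical obstacle; they are handled by standard arguments from the stochastic-control literature. Granting this sub-DPP, I conclude $\Phi(t,y)\geq\varphi(t,y)+\eta$ on a neighborhood of $(t_0,x_0)$, whence $\Phi_*(t_0,x_0)\geq\varphi(t_0,x_0)+\eta$ upon passing to the liminf — contradicting $\Phi_*(t_0,x_0)=\varphi(t_0,x_0)$. Finally, the initial condition $\Phi(0,\cdot)=1$ is immediate from $\mathfrak u_{\mathcal M}(0,x)=1$ in \eqref{eq:um}.
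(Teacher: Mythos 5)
Your argument is correct and follows the same overall strategy as the paper: contradict the failure of the supersolution inequality at a strict minimizer by producing a Markovian admissible system whose covariance $\mathbf{a}(\cdot)$ approximates the maximizing $a\in\mathcal A(x_0)$ (via Assumption \ref{asmp2}), apply the It\^o decomposition of Lemma \ref{lemma:Ito} on the exit time from a small neighborhood, and contradict the dynamic programming inequality for $\Phi$. Where you differ is in two places, both genuine simplifications over the paper's execution. First, the paper truncates the state--price density by introducing the auxiliary stopping time $\lambda=\inf\{s:|\log L(s)|>C_1\}$ and works on $\rho=\nu\wedge\lambda$ so that the It\^o-integral expectations vanish trivially; it then must control the event $\{\rho\ne\nu\}$ by the Markov-inequality estimate of Lemma \ref{lemma:rho=nu}, which forces a delicate balancing of constants $C_1,C_2^*,C_3^*$. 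You instead observe that on $[0,\tau]$ the relative-risk functional $\vartheta$ is bounded (Assumption \ref{asmp1} plus the exit-time localization), so $L(\cdot\wedge\tau)$ satisfies Novikov's condition, whence $L(\cdot\wedge\tau)X(\cdot\wedge\tau)$ is an $L^2$-bounded local martingale and therefore a true martingale with $\mathbb E[L(\tau)X(\tau)]=\|y\|_1$; this eliminates $\lambda$, $C_1$, and Lemma \ref{lemma:rho=nu} entirely and yields the clean gap $\Phi\ge\varphi+\eta$ on a whole neighborhood, from which the contradiction follows by taking $\liminf$. Second, both proofs lean on the same load-bearing external input --- the super-direction of the DPP for $\Phi$ --- which the paper imports as Proposition \ref{prop:DPP} from the cited Neufeld--Nutz and Nutz--van\,Handel results, whereas you sketch the standard pasting/measurable-selection construction; you correctly flag this as the main technical obstacle, and invoking the same references would close it. One minor presentational point: when asserting that the It\^o-integral expectations vanish in your drift estimate, it is worth stating explicitly that their integrands are $L$ times quantities bounded on $\overline{\mathcal U}$, so that the same $L^2$ bound on $L(\cdot\wedge\tau)$ used for the martingale claim also gives integrability of the quadratic variations.
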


%%%%%%%%%%%%%%%%%%%%%%%%%%%%%
\subsection{Discussion of Related Work}
%%%%%%%%%%%%%%%%%%%%%%%%%%%%% 
 
These results echo similar themes from the   literature on   models with an analogous type of uncertainty, under which the functionals $\sigma$ and $\vartheta$ are {\it fixed}; instead, the uncertainty  comes from a {\it control} process $\mathcal{C}(\cdot)$. 
At any time $t$, the values of $\sigma$ and $\vartheta$ are determined not only by the present capitalizations $\mathfrak{X}(t)$, but also by the present value $\mathcal{C}(t)$ of the control process $\mathcal{C}$, i.e., the local volatility matrix and the relative risk vector at time $t$ are $\sigma(\mathfrak{X}(t),\mathcal{C}(t))$ and $\vartheta(\mathfrak{X}(t),\mathcal{C}(t))$, respectively. 
A control process is a progressively measurable process that takes values in a given subset $\Gamma$ of some  Euclidean space and satisfies certain integrability condition. 

\smallskip
Among those papers in the literature are the ground-breaking works  \cite{L83a}--\cite{L84} by \textsc{P.L. Lions}, specifically, 
\cite[Theorem III.1]{L84} (or \cite[Theorem I.1]{L83b}). These impose much stronger assumptions   on the volatility and drift structure: namely, $\sup_{\gamma\in\Gamma} ||h(\cdot,\gamma)||_{W^{2,\infty}(\mathbb{R}^n)}< \infty$, 
and continuity of $h(x,\cdot)$ for all $x$, where \label{p:h}$h=\sigma_{ik},\beta_i$\,, $1\le i,k\le n$\,. 

A similar result was proved in \cite[Theorem 4.1]{ST}, but under the  stronger assumptions that both functions $\sigma$ and $\beta$ be  bounded and \textsc{Lipschitz}, that the analogue  in their formulation of the function $F$ of \eqref{eq:ourF} be locally \textsc{Lipschitz}, and that the set $\Gamma$ be compact. 

\smallskip
If the functions $\alpha_{ij}(\cdot,\gamma)$ $( \gamma\in \Gamma, 1\le i,j\le n)$ are all of class $C^{1,\eta}_{\mathrm{loc}}\left(\mathbb{R}_+^n\right)$ for some constant \label{p:eta}$\eta\in (0,1]$, then in \cite[Theorem 3.3]{BH}, and more generally, in \cite[Theorem 2.1]{KR}, the asymptotic-growth-optimal trading strategy is characterized in terms of a generalized version of the principal eigenvalue of the following fully nonlinear elliptic operator and its associated eigenfunction:
$$\widetilde{\mathcal{L}} u (t,x)\, :=\, \frac{1}{\,2\,}\sum_{i,j}  x_i x_j a_{ij}  D^2_{ij} u(t,x) \,.
$$

For a model with {\it no} uncertainty and with local volatility matrix $\sigma(\mathfrak{X}(t))$ and  relative risk vector $\vartheta(\mathfrak{X}(t))$ at time $t$, the viscosity characterization was obtained in \cite[Proposition 4.5]{BHS} but with additional local \textsc{Lipschitz} condition on $\sigma$ and $\vartheta$. This (local) \textsc{Lipschitz} condition is also a typical assumption in previous literature on stochastic control and dynamic programming, e.g., \cite{BT}, \cite{HL} and \cite{T} (it is even assumed in \cite{FV} that $\sigma(y,\gamma)$ and $\vartheta(y,\gamma)$ are continuous and twice differentiable in $y$). 

\smallskip
In the one-dimensional case $(n=1)$ with {\it zero}  drift $(\beta\equiv 0)$ but  {\it no}  uncertainty, the authors of \cite{CHSZ} removed the local \textsc{Lipschitz} condition and hence chose not to pursue a viscosity characterization; instead, provided that the function $\sigma$ is  continuous and satisfies $\int_{1}^{\infty} x\sigma^{-2}(x)\,\mathrm{d}x =\infty\,$,  
they approximated the arbitrage function by classical solutions to  \textsc{Cauchy} problems \cite[Theorem 5.3]{CHSZ}.

%%%%%%%%%%%%%%%%%%%%%%%%%%%%%%%%%%
\section{The Proof of Theorem \ref{thm:viscosity1}: Viscosity Subsolution}
\label{section:subsol}
%%%%%%%%%%%%%%%%%%%%%%%%%%%%%%%%%%

We first highlight the main idea without many of the technicalities. 
We argue by contradiction, assuming the negation of \eqref{eq:F<=0} in Definition \ref{def:viscosity sol} with the function $F$ as in \eqref{eq:ourF}: namely,  that there exist   $\varphi \in C^{1,2}\left( (0,\infty) \times \mathbb{R}_{+}^n\right)$ and  $(t_0,x_0) \in (0,\infty) \times \mathbb{R}_{+}^n$,  such that  $(t_0,x_0)$ is a strict maximum of $ \, \widehat{\Phi}^* - \varphi \,$; that the maximal value is equal to zero; and that 
\begin{equation}\label{eq:hatG>0}
\big(\varphi_t-\widehat{\mathcal{L}}\varphi\big) (t_0,x_0)>0\,. 
\end{equation}

It follows from the definition \eqref{eq:u^*} of $\widehat{\Phi}^*$ that we can take a pair $(t^*,x^*)$ close to $(t_0,x_0)$ such that the nonnegative difference $\big(\varphi-\widehat{\Phi}\big) (t^*,x^*)$ is sufficiently small, say less than a small positive constant $C_3$\,; further, by the definition \eqref{eq:Phi} of $\widehat{\Phi}$, we can take an admissible system $\mathcal{M}^{x^*}\in \widehat{\mathfrak{M}}(x^*)$ such that $0\le \big(\widehat{\Phi}-\mathfrak{u}_{\mathcal{M}^{x^*}} \big)(t^*,x^*)<C_3$\,. Therefore $0\le(\varphi-\mathfrak{u}_{\mathcal{M}^{x^*}} )(t^*,x^*)<2\,C_3$\,. 
Under this system, we have
\begin{equation}\label{eq:xphi>E[LXphi]}
||x^*||_1\,\varphi(t^*,x^*) 
- \mathbb{E} \left[L(\rho)X(\rho)\varphi \big(t^*-\rho,\mathfrak{X}(\rho)\big)\right] 
=\, \mathbb{E} \left[\int_0^{\rho} L(s)X(s)\, g\big(t^*-s,s,\mathfrak{X}\big)\, \mathrm{d}s \right] >0\,,
\end{equation}
for any sufficiently small positive stopping time $\rho$\,, where 
$$g(t,s,\mathfrak{X}) := (\varphi_t-\mathcal{L}_{\alpha(s,\mathfrak{X})}\varphi)\left(t,\mathfrak{X}(s)\right) \ge \big(\varphi_t-\widehat{\mathcal{L}}\varphi\big)\left(t,\mathfrak{X}(s)\right)$$ for $(t,s) \in (0, \infty)\times [0, \infty)$ and with 
$\mathcal{L}_{a}$ and $\widehat{\mathcal{L}}$  as in \eqref{eq:hatL}. This displayed quantity   is positive for any sufficiently small $s$, and $t$ sufficiently close to $t^*$, by virtue of \eqref{eq:hatG>0} and the continuity of the function $F$ in \eqref{eq:ourF}.

\smallskip
On the other hand, on the left-hand side of \eqref{eq:xphi>E[LXphi]}  we can estimate $\varphi(t^*,x^*)$ from above by $\mathfrak{u}_{\mathcal{M}^{x^*}}(t^*,x^*)+2\,C_3$\,, and $\varphi \big(t^*-\rho,\mathfrak{X}(\rho)\big)$ from below by $\widehat{\Phi}\big(t^*-\rho,\mathfrak{X}(\rho)\big)+C_2$ (for some $\omega$'s in $\Omega$) or by  $\widehat{\Phi}\big(t^*-\rho,\mathfrak{X}(\rho)\big)$ (for other $\omega$'s in $\Omega$)  with $C_2$ a small positive constant; this allows us to deduce  
\begin{equation}\label{eq:xu>E[LXPhi]}
||x^*||_1 \, \mathfrak{u}_{\mathcal{M}^{x^*}}(t^*,x^*)> \mathbb{E} \left[L(\rho)X(\rho)\, \widehat{\Phi}\big(t^*-\rho,\mathfrak{X}(\rho)\big)\right].
\end{equation} 
But the inequality \eqref{eq:xu>E[LXPhi]} turns out to contradict the martingale property   of the process $$L(\cdot)X(\cdot) \, \mathfrak{u}_{\mathcal{M}^{\mathfrak{X}(\cdot)}}\big(T-\cdot\,,\mathfrak{X}(\cdot)\big) ;$$ 
see Proposition \ref{prop:martingale}, and recall $\mathcal{M}^x$, $\,x\in \mathbb{R}_+^n\,$ from  Definition \ref{def:MM}.
\smallskip

When implementing this program, the stopping time $\rho$ needs to be not only   small, but also such  that on $[0,\rho]$  the processes
$L(\cdot)$ and $\mathfrak{X}(\cdot)$ are bounded,   and $\mathfrak{X}(\cdot)$  is close to  $x^*$; however,   $\rho$ cannot be too small, in order to ensure that $\varphi \big(t^*-\rho,\mathfrak{X}(\rho)\big)\ge \widehat{\Phi}\big(t^*-\rho,\mathfrak{X}(\rho)\big)+C_2$ holds with a probability greater than some positive constant independent of $C_2$ ($1/2$ in the following proof, see Lemma \ref{lemma:rho=nu}). These considerations inspire us to construct  $\rho$ as in  \eqref{eq:nu}--\eqref{eq:rho} below.

\smallskip

\begin{proof}[Proof of Theorem \ref{thm:viscosity1}:]
According to Definition \ref{def:viscosity sol}\,(i) of viscosity subsolution with the function $F$ as in \eqref{eq:ourF}, it suffices to show that for any test function $\varphi \in C^{1,2}\left( (0,\infty) \times \mathbb{R}_{+}^n\right) $ and  $(t_0,x_0) \in (0,\infty) \times \mathbb{R}_{+}^n$ with 
\begin{equation}\label{eq:max}
\big(\widehat{\Phi}^* - \varphi\big)(t_0,x_0) = 0 > \big(\widehat{\Phi}^* - \varphi\big)(t,x)\,, \ \ \forall\  (t,x)\in (0,\infty) \times \mathbb{R}_{+}^n\,,
\end{equation}
(i.e., such that $(t_0,x_0)$ is a strict maximum of $ \, \widehat{\Phi}^* - \varphi$), we have  $$\big(\varphi_t-\widehat{\mathcal{L}}\varphi\big)(t_0,x_0) \le 0\,.$$
Here $\widehat{\mathcal{L}}$ is defined in \eqref{eq:hatL}, and $\, \widehat{\Phi}^*\,$\label{p:hatPhi^*} is the upper-semicontinuous envelope of  $\, \widehat{\Phi}\,$ as in the definition \eqref{eq:u^*}. {\it We shall argue this by contradiction, assuming that} 
\begin{equation}\label{eq:hatG}
\widehat{\mathcal{G}}(t_0,x_0)>0\, \quad \text{holds for the function} \qquad \widehat{\mathcal{G}}(t,x) := \big(\varphi_t-\widehat{\mathcal{L}}\varphi\big)(t,x)\,.
\end{equation}
Since the function $F$ of \eqref{eq:ourF} is continuous, so is the function $\widehat{\mathcal{G}}$  just introduced in  \eqref{eq:hatG}. There will exist then,  under this hypothesis and Assumption  \ref{asmp1}, a neighborhood $\,\mathcal{D}_{\delta} := (t_0 - \delta, t_0 + \delta) \times B_{\delta}(x_0)$\label{p:D_delta} of $(t_0,x_0)$ in $(0,\infty) \times \mathbb{R}_{+}^n$ with $0< \delta < ||x_0||_1 / n\,$, on which    $\mathcal{K} (\cdot)$ is bounded and $\widehat{\mathcal{G}} (\cdot\,, \cdot) >0$ holds.

\medskip  
Let $C$\label{p:C} be a constant such that $||\theta||<C$ and $|a_{ij}|<C$ $(1\le i,j \le n)$ hold for all pairs $(\theta, a=(a_{ij})_{n \times n})\in \mathcal{K}(x)$ and all $x\in B_{\delta}(x_0)$\,. We notice that \label{p:(x_0)_i}$|x_i-(x_0)_i|\le |x-x_0|<\delta$ holds for any $x=(x_1,\dots,x_n)\in \mathcal{D}_{\delta}$, thus
\begin{equation}\label{eq:||x||_1}
0 < ||x_0||_1 - n\delta < ||x||_1< ||x_0||_1 + n\delta \,,
\end{equation}
and introduce the strictly positive constants 
\begin{equation}\label{eq:C_1}
C_1:=\sqrt{32\, \delta C^2 + 4\, \delta^2 C^4}\,,\  \ \ \ \ 
C_2:= - \max_{\partial \mathcal{D}_{\delta}} \, \big(\widehat{\Phi}^* - \varphi \big)(t,x)\,, \ \ \ \ \ \ 
C_3:=\frac{C_2\,  e^{-C_1}(||x_0||_1-n\delta)}{4(||x_0||_1 + n\delta)}
\end{equation}
(the positivity of $C_2$ and $C_3$ follows from  \eqref{eq:max} and \eqref{eq:||x||_1}, respectively).
We observe that 
$$\limsup_{(t,x)\to (t_0,x_0)}\big(\widehat{\Phi}-\varphi\big)(t,x) = \big(\widehat{\Phi}^*-\varphi\big)(t_0,x_0)=0\,,$$ 
hence there exists $(t^*,x^*)\in \mathcal{D}_{\delta}$ such that  
\begin{equation}\label{eq:t*,x*}
\big(\widehat{\Phi}-\varphi\big)(t^*,x^*) > -C_3\,;
\end{equation}
and by the definition \eqref{eq:Phi} of $\widehat{\Phi}$, there exists an admissible system $\mathcal{M}^{x^*}\in \widehat{\mathfrak{M}}(x^*)$ such that 
\begin{equation}\label{eq:u_M^x*}
\mathfrak{u}_{\mathcal{M}^{x^*}}(t^*,x^*) 
> \widehat{\Phi}(t^*,x^*) - C_3>\varphi(t^*,x^*) - 2\,C_3\,, \quad{\mathrm{by}}\ \eqref{eq:t*,x*}\,. 
\end{equation}
The remaining discussion in this section (with the exception of Proposition \ref{prop:martingale}) will be carried out under this admissible system. 
 
\medskip
\noindent
$\bullet~$ Let us start by recalling the definitions of $\mathcal{D}_{\delta}$ and $ t^*$, and by constructing the positive stopping times
\begin{equation}\label{eq:nu}\ \ 
\nu \,(=\nu(\omega)):=\inf \big\{s\in (0, t^*] : \big(t^*-s,\mathfrak{X}(s)\big)\notin \mathcal{D}_{\delta} \big\} \le t^*-(t_0-\delta)=(t^*-t_0)+\delta<t^* \wedge 2\delta\,,
\end{equation}
\begin{equation}\label{eq:rho}
\lambda \,(=\lambda \,(\omega)):=\inf\{s>0: |\log L(s)| > C_1\}\,, \qquad \rho \,(=\rho(\omega)):=\nu \wedge \lambda 
\end{equation}
with the usual convention inf$\,\emptyset = \infty$\,. From the definitions \eqref{eq:hatG} and \eqref{eq:hatL}, we see that  
\begin{equation}\label{eq:g}
g(t,s,\mathfrak{X}) := \left( \varphi_t-\mathcal{L}_{\alpha(s,\mathfrak{X})}\varphi\right) \big(t,\mathfrak{X}(s)\big)
\ge \widehat{\mathcal{G}}\big(t,\mathfrak{X}(s)\big)\,,\quad \forall\ (t,s) \in (0, \infty)\times [0, \infty)\,.
\end{equation}
Recall that $\widehat{\mathcal{G}} (\cdot\,, \cdot) >0$ holds on $\mathcal{D}_{\delta}$, from the discussion right below \eqref{eq:hatG}. Combining with \eqref{eq:g}, this observation leads to
\begin{equation}\label{eq:g>0}
g(t^*-s,s,\mathfrak{X}) >0\,,\quad \forall\ s \in [0, \rho)\,.
\end{equation}

\smallskip
Thanks to the assumption $\varphi \in C^{1,2}\left( (0,\infty)\times \mathbb{R}_+^n\right) $, we can apply \textsc{It\^o}'s change of variable rule  to $X(t)L(t)\varphi(T-t,\mathfrak{X}(t))$, $0 \le t \le T$  and derive the following decomposition (see Appendix \ref{appendix:Ito} for a detailed proof).

\begin{lemma}\label{lemma:Ito}
For any $0\leq t<T<\infty$, $x\in \mathbb{R}_+^n$\,, $\varphi \in C^{1,2}\left( (0,\infty)\times \mathbb{R}_+^n\right) $,  and diffusion $\mathfrak{X}(\cdot)$ satisfying \eqref{eq:SDE},
we have

\begin{align}
\nonumber
\mathrm{d}\big(L(t)X(t)\varphi\big(T-t,\mathfrak{X}(t)\big) \big) 
\,= \,&-L(t)X(t) g\big(T-t,t,\mathfrak{X}\big)\, \mathrm{d}t 
\\ 
\nonumber
&-X(t)\varphi\big(T-t,\mathfrak{X}(t)\big)L(t)\,\vartheta'(t,\mathfrak{X})\,  \mathrm{d}W(t)\\
\label{eq:Ito}
&+L(t)\sum_{i,k} X_i(t)  \big[\varphi+X(t) D_i \varphi\big] \big(T-t,\mathfrak{X}(t)\big) \sigma_{ik}(t,\mathfrak{X})\, \mathrm{d}W_k(t)\,.
\end{align}
\end{lemma}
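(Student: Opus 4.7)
The plan is a direct application of It\^o's formula, carried out in three stages and then assembled via the product rule.

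First, I would compute the differentials of the pieces separately. From the exponential representation \eqref{eq:L=exp} the deflator satisfies $\mathrm{d}L(t) = -L(t)\vartheta'(t,\mathfrak{X})\,\mathrm{d}W(t)$, and summing the SDE \eqref{eq:SDE} over $i$ gives $\mathrm{d}X(t)=\sum_i X_i(t)\beta_i(t,\mathfrak{X})\,\mathrm{d}t+\sum_{i,k}X_i(t)\sigma_{ik}(t,\mathfrak{X})\,\mathrm{d}W_k(t)$, where $\beta_i=(\sigma\vartheta)_i$. Applying the It\^o product rule to $L(t)X(t)$, the cross-variation contributes $-L\sum_i X_i\beta_i\,\mathrm{d}t$, exactly cancelling the drift of $L\,\mathrm{d}X$, so $L(\cdot)X(\cdot)$ is a local martingale with
\begin{equation*}
\mathrm{d}\bigl(L(t)X(t)\bigr)=L(t)\sum_{i,k}X_i(t)\sigma_{ik}(t,\mathfrak{X})\,\mathrm{d}W_k(t)-L(t)X(t)\vartheta'(t,\mathfrak{X})\,\mathrm{d}W(t).
\end{equation*}

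Second, I would apply It\^o's formula to $\Psi(t):=\varphi(T-t,\mathfrak{X}(t))$. Using $\mathrm{d}\langle X_i,X_j\rangle_t=X_i(t)X_j(t)\alpha_{ij}(t,\mathfrak{X})\,\mathrm{d}t$ (which follows from \eqref{eq:SDE} and \eqref{eq:alpha}), this yields
\begin{equation*}
\mathrm{d}\Psi(t)=\Bigl[-\varphi_t+\sum_i X_i\beta_i D_i\varphi+\tfrac12\sum_{i,j}X_iX_j\alpha_{ij}D^2_{ij}\varphi\Bigr]\bigl(T-t,\mathfrak{X}(t)\bigr)\,\mathrm{d}t+\sum_{i,k}X_iD_i\varphi\,\sigma_{ik}\,\mathrm{d}W_k(t).
\end{equation*}

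Third, I would combine via the product rule $\mathrm{d}(LX\Psi)=LX\,\mathrm{d}\Psi+\Psi\,\mathrm{d}(LX)+\mathrm{d}\langle LX,\Psi\rangle$. Since $\Psi\,\mathrm{d}(LX)$ is a pure martingale increment, all the drift is carried by $LX\,\mathrm{d}\Psi$ and by the cross-variation, which evaluates to $L\sum_{i,j}X_iX_j\alpha_{ij}D_j\varphi\,\mathrm{d}t-LX\sum_j X_j\beta_j D_j\varphi\,\mathrm{d}t$. Adding these with the drift of $LX\,\mathrm{d}\Psi$, the two $\beta$-terms cancel, and using the symmetry $\alpha_{ij}=\alpha_{ji}$ together with $X=\|x\|_1$, the remaining second-order terms collect into $LX\cdot\mathcal{L}_{\alpha(t,\mathfrak{X})}\varphi(T-t,\mathfrak{X}(t))$ in the notation of \eqref{eq:hatL}. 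Hence the total drift is $-L(t)X(t)\,(\varphi_t-\mathcal{L}_{\alpha(t,\mathfrak{X})}\varphi)(T-t,\mathfrak{X}(t))\,\mathrm{d}t=-L(t)X(t)\,g(T-t,t,\mathfrak{X})\,\mathrm{d}t$. The two remaining martingale parts are precisely the $\vartheta\,\mathrm{d}W$ and $\sigma\,\mathrm{d}W$ terms on the right-hand side of \eqref{eq:Ito}, once one groups $\Psi\cdot L\sum_{i,k}X_i\sigma_{ik}\,\mathrm{d}W_k$ from $\Psi\,\mathrm{d}(LX)$ with $LX\sum_{i,k}X_iD_i\varphi\,\sigma_{ik}\,\mathrm{d}W_k$ from $LX\,\mathrm{d}\Psi$ into $L\sum_{i,k}X_i[\varphi+XD_i\varphi]\sigma_{ik}\,\mathrm{d}W_k$.

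There is no real obstacle here beyond careful algebraic bookkeeping; the essentially mechanical point to be verified is the cancellation of the $\beta$-terms and the correct identification of the surviving second-order part with $\mathcal{L}_{\alpha}\varphi$, which uses the symmetry of $\alpha$ and the identity $X=\|\mathfrak{X}\|_1$. All stochastic integrals are well-defined on any bounded time interval by the smoothness of $\varphi$, the continuity of $\mathfrak{X}$, and the integrability \eqref{eq:integrability}; localization is not needed because the identity \eqref{eq:Ito} is stated as a differential.
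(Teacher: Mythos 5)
Your proof is correct and arrives at the same result as the paper's, by essentially the same method: It\^o's formula applied to the product $L\cdot X\cdot\varphi(T-\cdot,\mathfrak{X}(\cdot))$. The only organizational difference is that you nest the product rule in two stages, first forming $LX$ (and noting it is a local martingale so its drift vanishes) and then pairing $LX$ with $\Psi$, whereas the paper expands the three-factor product $r_1r_2r_3$ in a single It\^o application with the substitution $\mathfrak{s}_{ik}=X_i\sigma_{ik}$, $\mathfrak{b}=\mathfrak{s}\vartheta$. Your two-stage grouping is slightly more economical since the intermediate observation that $LX$ is driftless kills one block of terms at the outset, and it is also an observation the paper itself uses later (in the proof that the deflated wealth process is a supermartingale); otherwise the two computations are the same.
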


Let us apply now Lemma \ref{lemma:Ito} with $T=t^*$, integrating \eqref{eq:Ito} with respect to $t$ over $[0, \rho]$  and taking the expectation under $\mathbb{P}$, to obtain 
\begin{equation}
\label{eq:E[LXphi]}\ 
||x^*||_1\,\varphi(t^*,x^*) 
- \mathbb{E} \left[L(\rho)X(\rho)\varphi \big(t^*-\rho,\mathfrak{X}(\rho)\big)\right] 
=\, \mathbb{E} \left[\int_0^{\rho} L(s)X(s)\, g\big(t^*-s,s,\mathfrak{X}\big)\, \mathrm{d}s \right]  >\, 0\,.
\end{equation}
Here, the strict inequality comes from \eqref{eq:g>0} and  the positivity of $\rho$\,; whereas,  in the equality, the expectations of the integrals with respect to $\mathrm{d}W(t)$ or $\mathrm{d}W_k(t)$ have all vanished. This is due to the  the boundedness of the processes $\mathfrak{X}(\cdot)$ and $L(\cdot)$ on $[0,\rho]$, of the functions $\varphi$ and $D_i \varphi$ on $\overline{\mathcal{D}_{\delta}}\,$, and of the functionals $\vartheta(\cdot, \mathfrak{X})$, $\alpha_{ij}(\cdot, \mathfrak{X})$ (by Assumption \ref{asmp1}) and thus $\sigma_{ik}(\cdot, \mathfrak{X})$ on $[0,\rho]$.

\smallskip
(We have made use here of the following facts.  The eigenvalues $e_i$\label{p:e_i} of $\alpha$ are the nonnegative roots of the characteristic polynomial of $\alpha$, which is determined by the entries $\alpha_{ij}$; since the $\alpha_{ij}(\cdot, \mathfrak{X})$'s are bounded on $[0,\rho]$, so are the $e_i$'s. Thus $\sigma$, which can be written as $\mathbf{QD}$\label{p:QD} for some $n\times n$ orthonormal matrix $\mathbf{Q}$ and diagonal matrix $\mathbf{D}$ with diagonal entries $\sqrt{e_i}$\,, is also bounded.)

\smallskip
Notice that $\,\big(t^*-\nu,\mathfrak{X}(\nu)\big) \in \partial\mathcal{D}_{\delta}\,$ holds by the definition \eqref{eq:nu} of $\nu$, so  
we have 
\begin{equation}\label{eq:partialD}
\varphi \big(t^*-\nu,\mathfrak{X}(\nu)\big)
\ge \widehat{\Phi}^* \big(t^*-\nu,\mathfrak{X}(\nu)\big) +C_2
\ge \widehat{\Phi} \big(t^*-\nu,\mathfrak{X}(\nu)\big) +C_2\,.
\end{equation}
Plugging (\ref{eq:max}), \eqref{eq:u_M^x*} and (\ref{eq:partialD}) into (\ref{eq:E[LXphi]}) yields  
\begin{eqnarray}
\nonumber
&&\ 0 <||x^*||_1\, \big[\, \mathfrak{u}_{\mathcal{M}^{x^*}}(t^*,x^*)+2\,C_3 \, \big]
- \mathbb{E} \left[\bm{1}_{\{\rho=\nu\}}L(\rho)X(\rho)\left(\widehat{\Phi} \big(t^*-\rho,\mathfrak{X}(\rho)\big)+C_2\right)\right. \\
\label{eq:>0}
&&\ \ \ \ \ \ \ + \left.\bm{1}_{\{\rho\neq\nu\}}L(\rho)X(\rho)\,\widehat{\Phi} \big(t^*-\rho,\mathfrak{X}(\rho)\big)\right] \\ \nonumber
&&\ \ \ =
||x^*||_1\,\mathfrak{u}_{\mathcal{M}^{x^*}}(t^*,x^*) 
- \mathbb{E} \left[L(\rho)X(\rho)\,\widehat{\Phi} \big(t^*-\rho,\mathfrak{X}(\rho)\big)\right]
+ 2\,C_3\, ||x^*||_1  
- C_2\, \mathbb{E} \big[\bm{1}_{\{\rho=\nu\}}L(\rho)X(\rho)\big]\, .
\end{eqnarray}

We start by estimating the last term on the right-hand side of \eqref{eq:>0}. 
Recalling the definition \eqref{eq:rho} of $\rho$ and the second inequality in \eqref{eq:||x||_1}, we see that
\begin{equation}\label{eq:L(rho)}
 L(\rho) \ge e^{-C_1}\ \ \ \ {\mathrm{and}}\ \ \ \ X(\rho)>||x_0||_1 - n\delta\, >0\,, 
\end{equation} 
hence
\begin{equation}\label{eq:E[1LX]}
\mathbb{E} \big[\bm{1}_{\{\rho=\nu\}}L(\rho)X(\rho)\big]
\ge e^{-C_1}\big(||x_0||_1 - n\delta \big)\,\, \mathbb{P} \big(\rho=\nu\big)\,.
\end{equation} 

\begin{lemma}
\label{lemma:rho=nu}
We have
\begin{equation}\label{eq:rho=nu}
\mathbb{P}\big( \rho =\nu\big) \,=\, \mathbb{P}\big(\lambda\ge \nu\big)  \ge \frac{1}{\,2\,}\,.
\end{equation}
\end{lemma}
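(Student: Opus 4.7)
The plan is to handle the first equality trivially and then focus on the probability bound. The identity $\mathbb{P}(\rho = \nu) = \mathbb{P}(\lambda \geq \nu)$ is tautological, since $\rho = \nu \wedge \lambda$ means $\{\rho = \nu\} = \{\nu \leq \lambda\}$. The task therefore reduces to establishing $\mathbb{P}(\lambda < \nu) \leq 1/2$, i.e., that the deflator $L$ has stayed inside the window $[e^{-C_1}, e^{C_1}]$ on $[0, \nu)$ with probability at least one half.

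For this, I would exploit two features. On $[0, \nu]$ the path $\mathfrak{X}$ stays in $B_\delta(x_0)$, so the choice of $C$ made just before \eqref{eq:C_1} gives $\|\vartheta(s, \mathfrak{X})\| < C$; and $\nu < 2\delta$ from \eqref{eq:nu}. Setting $M(s) := \int_0^s \vartheta'(u, \mathfrak{X})\, \mathrm{d}W(u)$, the quadratic variation satisfies $\langle M \rangle_s \leq C^2 s \leq 2\delta C^2$ on this interval, and from \eqref{eq:L=exp},
\[
|\log L(s)| \,\leq\, |M(s)| + \tfrac{1}{2}\langle M \rangle_s \,\leq\, |M(s)| + \delta C^2, \qquad s \in [0, \nu].
\]
Consequently $\{\lambda < \nu\} \subseteq \bigl\{\sup_{s \leq \nu} |M(s)| > C_1 - \delta C^2\bigr\}$, where the positivity of $C_1 - \delta C^2$ is elementary from \eqref{eq:C_1}.

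Next, I would apply Doob's $L^2$-maximal inequality to the stopped martingale $M^\nu := M(\cdot \wedge \nu)$, which is a genuine square-integrable martingale because $\vartheta$ is bounded on $[0, \nu]$ and $\nu$ itself is bounded by $2\delta$:
\[
\mathbb{P}\!\left(\sup_{s \leq \nu} |M(s)| > C_1 - \delta C^2\right) \,\leq\, \frac{\mathbb{E}[\langle M \rangle_\nu]}{(C_1 - \delta C^2)^2} \,\leq\, \frac{2 \delta C^2}{(C_1 - \delta C^2)^2}\,.
\]
The slightly peculiar prescription $C_1 = \sqrt{32 \delta C^2 + 4 \delta^2 C^4}$ in \eqref{eq:C_1} is engineered precisely so that $(C_1 - \delta C^2)^2 \geq 4 \delta C^2$ -- a routine algebraic expansion -- which forces the displayed probability to be at most $1/2$, yielding the claim.

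The main subtlety, though not really an obstacle, is pure bookkeeping of constants: one must absorb the drift $\tfrac{1}{2}\langle M \rangle$ inside $\log L$ into the threshold for $|M|$ without spoiling the Doob bound. This works cleanly only because both $\nu$ and $\langle M \rangle_\nu$ are of order $\delta$, so the engineered $C_1 = O(\sqrt{\delta})$ sits exactly at the scale where the $L^2$-maximal inequality delivers the sharp $1/2$.
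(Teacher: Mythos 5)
Your proof is correct, and it takes a genuinely different route from the paper's. The paper bounds $\mathbb{E}\big[\sup_{0\le t\le \nu}(\log L(t))^2\big]$: it splits $(\log L)^2$ via $(a+b)^2 \le 2a^2 + 2b^2$ into a stochastic-integral part and a drift part, applies the Burkholder--Davis--Gundy $L^2$ inequality ($\mathbb{E}[\sup|M|^2]\le 4\,\mathbb{E}[\langle M\rangle]$) to the former, and then invokes Markov's inequality; with $C_1^2 = 32\delta C^2 + 4\delta^2 C^4$ this produces exactly $\tfrac12$. You instead absorb the deterministic drift $\tfrac12\langle M\rangle$ directly into the threshold via the pointwise bound $|\log L(s)|\le |M(s)|+\delta C^2$, reducing the event $\{\lambda<\nu\}$ to a tail event for $\sup|M|$ alone, and then apply Doob's weak-type maximal inequality $\mathbb{P}(\sup|M|>c)\le \mathbb{E}[\langle M\rangle_\nu]/c^2$. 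Your algebra $(C_1-\delta C^2)^2\ge 4\delta C^2$ does reduce to $784+152u+9u^2\ge 0$ with $u=\delta C^2$, which is indeed always true, and $C_1>2\delta C^2>\delta C^2$ secures positivity of the shifted threshold. The trade-off is largely aesthetic: your route avoids both the $(a+b)^2$ loss and the factor $4$ from the strong $L^2$ maximal inequality, so your final bound is actually strictly below $\tfrac12$ (the paper's definition of $C_1$ is calibrated to land exactly at $\tfrac12$ under its chain of estimates, not yours); the paper's route avoids the somewhat opaque algebraic verification that the shifted threshold is still large enough. Both deliver the same conclusion, and your argument is complete as written.
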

\begin{proof}
For any $t\in(0, \nu]\,,$ we have 
\begin{eqnarray*}
\big( \log  L (t) \big)^2 &= &\left|-\int_0^t\vartheta'(s,\mathfrak{X})\, \mathrm{d}W(s) - \int_0^t\frac{1}{\,2\,} \,\big|\big|\vartheta(s,\mathfrak{X})\big|\big|^2\, \mathrm{d}s\right|^2\\
&\le& 2\left|\int_0^t\vartheta'(s,\mathfrak{X})\, \mathrm{d}W(s)\right|^2
+ 2\left|\int_0^t\frac{1}{\,2\,} \,\big|\big|\vartheta(s,\mathfrak{X})\big|\big|^2\, \mathrm{d}s\right|^2.
\end{eqnarray*}
It follows from $t\le \nu<2\delta$ that
\begin{equation*}
\int_0^t \frac{1}{\,2\,} \,\big|\big|\vartheta(s,\mathfrak{X})\big|\big|^2\, \mathrm{d}s
\, \le \, \frac{t}{\,2\,} \,C^2 \, 
\le\, \delta C^2,
\end{equation*}
and therefore
\begin{equation*} 
\mathbb{E}\left[\sup_{0\le t\le \nu} \big( \log  L (t) \big)^2\right] 
\le 2\, \mathbb{E}\left[\sup_{0\le t\le \nu} \left|\int_0^t\vartheta'(s,\mathfrak{X})\, \mathrm{d}W(s)\right|^2\right]
+ 2\, \delta^2 C^4.
\end{equation*}
Further, the \textsc{Burkholder-Davis-Gundy} Inequality gives
$$
2\, \mathbb{E}\left[\sup_{0\le t\le \nu} \left|\int_0^t\vartheta'(s,\mathfrak{X})\, \mathrm{d}W(s)\right|^2\right]
\le 8\, \mathbb{E} \left[\int_0^{\nu}\left|\left|\vartheta'(s,\mathfrak{X})\right|\right|^2\mathrm{d}s\right]
\le 8\, \mathbb{E} \left[\nu C^2\right]
\le 16\, \delta C^2,
$$
thus
\begin{equation*}
\mathbb{E}\left[\sup_{0\le t\le \nu} \big( \log  L (t) \big)^2\right] 
\le  16\, \delta C^2 + 2\, \delta^2 C^4.
\end{equation*}
Finally, appealing to \textsc{Markov}'s Inequality yields
\begin{equation*}
\mathbb{P}\big(\lambda< \nu\big)
= \,\mathbb{P}\left[\sup_{0\le t\le \nu} |\log L (t)|> C_1\right] 
\le \,\frac{16\, \delta C^2 + 2\, \delta^2 C^4}{C_1^2} = \frac{1}{\,2\,} 
\end{equation*}
(this is why we defined $C_1$ as   in \eqref{eq:C_1}; in fact, setting $C_1$ to be any value greater than the right-hand side of the first equation in \eqref{eq:C_1} would also work), and the claim (\ref{eq:rho=nu}) follows. 
\end{proof}

Substituting the estimate of Lemma \ref{lemma:rho=nu} into \eqref{eq:E[1LX]},  we obtain
\begin{equation*} 
C_2\,\mathbb{E} \big[\bm{1}_{\{\rho=\nu\}}L(\rho)X(\rho)\big]
\ge \frac{1}{\,2\,} C_2\, e^{-C_1}\big(||x_0||_1 - n\delta \big)=2\,C_3  \big( ||x_0||_1 + n\delta \big) >2\,C_3 \,||x^*||_1\,,
\end{equation*} 
where we used  the definition \eqref{eq:C_1} of $C_3$ and the last inequality in \eqref{eq:||x||_1}. 
Plugging into \eqref{eq:>0} yields the inequality \eqref{eq:xu>E[LXPhi]}; 
however, this inequality contradicts Proposition \ref{prop:martingale}\,(ii) right below with $T=t^*$ and $\tau=\rho$\,.  (This explains why we constructed $C_3$ as we did in \eqref{eq:C_1}; in fact, setting $C_3$ to be any value less than the right-hand side of \eqref{eq:C_1} would also work.)
 
 The proof of Theorem 4.5 is complete.
\end{proof}

\begin{proposition}\label{prop:martingale} 
{\bf Martingale Property:}  Recall the strongly Markovian admissible systems $\mathcal{M}^{y}\in \widehat{\mathfrak{M}}(y)$ $(y\in \mathbb{R}_+^n)$ from Definition \ref{def:MM}.  

\smallskip
\noindent
{\bf (i)}
For $0\leq t\leq T<\infty$, we have
\begin{equation*}
L(t)X(t) \, \mathfrak{u}_{\mathcal{M}^{\mathfrak{X}(t)}}\big(T-t,\mathfrak{X}(t)\big) = \,\mathbb{E} \big[\,L(T)X(T) \,\big|\, \mathcal{F}(t)\, \big]\,,\ \mathbb{P} \mathrm{-a.s.}
\end{equation*}
In particular, the process on the left-hand side is a martingale.

\smallskip

\noindent
{\bf (ii)} For any stopping time $\tau\leq T<\infty$, we have 
\begin{equation*}
\mathbb{E} \left[L(\tau)X(\tau)\, \mathfrak{u}_{\mathcal{M}^{\mathfrak{X}(\tau)}}\big(T-\tau,\mathfrak{X}(\tau)\big)\right] 
= ||x^*||_1 \, \mathfrak{u}_{\mathcal{M}^{x^*}}(T,x^*)\,.
\end{equation*}
\end{proposition}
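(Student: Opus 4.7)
My plan is to derive part (i) from the strong Markov property of $\mathfrak{X}(\cdot)$ under the strongly Markovian admissible system, and to obtain part (ii) as an immediate corollary---either via optional sampling, or equivalently by running the same argument at the stopping time $\tau$ itself.

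For part (i), I would decompose $L(T) = L(t) \cdot R_{t,T}$ with
\[
R_{t,T} \,:=\, \exp\left(-\int_t^T \vartheta'(s,\mathfrak{X})\, \mathrm{d}W(s) \,-\, \frac{1}{\,2\,}\int_t^T ||\vartheta(s,\mathfrak{X})||^2\, \mathrm{d}s\right),
\]
pull the $\mathcal{F}(t)$-measurable factor $L(t)$ out of $\mathbb{E}[L(T)X(T) \,|\, \mathcal{F}(t)]$, and observe that $R_{t,T}\,X(T)$ is a measurable functional of the shifted processes $\{\mathfrak{X}(t+s),\, W(t+s)-W(t)\}_{s \in [0, T-t]}$ alone, by virtue of the time-homogeneous, Markovian form \eqref{eq:Markovian} of $\sigma$ and $\vartheta$. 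The strong Markov property afforded by Definition \ref{def:MM}\,(ii) then identifies the regular conditional distribution of this pair given $\mathcal{F}(t)$, on $\{\mathfrak{X}(t) = y\}$, with the unconditional distribution of $\bigl(\mathfrak{X}^{\mathcal{M}^y}(\cdot),\, W^{\mathcal{M}^y}(\cdot)\bigr)$. Substituting and invoking \eqref{eq:um} and \eqref{eq:X^M(T)} yields
\[
\mathbb{E}\bigl[\,R_{t,T}\, X(T) \,\big|\, \mathcal{F}(t)\bigr] \,=\, ||\mathfrak{X}(t)||_1\,\mathfrak{u}_{\mathcal{M}^{\mathfrak{X}(t)}}\bigl(T-t,\mathfrak{X}(t)\bigr) \,=\, X(t)\,\mathfrak{u}_{\mathcal{M}^{\mathfrak{X}(t)}}\bigl(T-t,\mathfrak{X}(t)\bigr),
\]
which is exactly the identity of (i). The martingale property is then immediate, because the left-hand side is exhibited as the conditional expectation of the fixed integrable random variable $L(T)X(T)$---integrability following from $\mathbb{E}[L(T)X(T)] = ||x^*||_1\,\mathfrak{u}_{\mathcal{M}^{x^*}}(T,x^*) \le ||x^*||_1 < \infty$.

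For part (ii), I would run the same decomposition with the bounded stopping time $\tau$ in place of $t$: the strong Markov property of $\mathfrak{X}(\cdot)$ at $\tau$, together with time-homogeneity, gives $\mathbb{E}[L(T)X(T) \,|\, \mathcal{F}(\tau)] = L(\tau)X(\tau)\,\mathfrak{u}_{\mathcal{M}^{\mathfrak{X}(\tau)}}(T-\tau,\mathfrak{X}(\tau))$, and taking expectations of both sides yields the desired equality because $\mathbb{E}[L(T)X(T)] = ||x^*||_1\,\mathfrak{u}_{\mathcal{M}^{x^*}}(T,x^*)$ by the very definition \eqref{eq:um} at the initial configuration $x^*$. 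Equivalently, one may invoke the optional sampling theorem for the uniformly integrable martingale $M(t) := \mathbb{E}[L(T)X(T) \,|\, \mathcal{F}(t)]$ and then identify $M(\tau)$ by means of part (i) extended to $\tau$.

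The main obstacle is the conditional-law identification used in part (i): since the SDE \eqref{eq:SDE} is not known to be pathwise (or even weakly) unique in general, the identification of the regular conditional law of $(\mathfrak{X}(t+\cdot),\, W(t+\cdot)-W(t))$ with the unconditional law of $(\mathfrak{X}^{\mathcal{M}^y},\, W^{\mathcal{M}^y})$ cannot be reduced to SDE uniqueness. Instead it must be extracted directly from the strong Markov property built into Definition \ref{def:MM}\,(ii), together with the time-homogeneity \eqref{eq:Markovian}---which is precisely what guarantees that the transition kernel of $\mathfrak{X}(\cdot)$ is determined by $(\mathbf{s},\boldsymbol{\theta})$ alone and hence agrees with that of $\mathfrak{X}^{\mathcal{M}^y}$ for every $y \in \mathbb{R}_+^n$.
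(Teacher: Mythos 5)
Your proof is correct and follows essentially the same route as the paper: both decompose $L(T)/L(t)$ as the exponential functional over $[t,T]$, invoke the time-homogeneous Markovian form \eqref{eq:Markovian} together with the strong Markov property granted by Definition \ref{def:MM}\,(ii) to identify $\mathbb{E}[L(T)X(T)\,|\,\mathcal{F}(t)]$ with $L(t)X(t)\,\mathfrak{u}_{\mathcal{M}^{\mathfrak{X}(t)}}(T-t,\mathfrak{X}(t))$, and obtain (ii) by optional sampling. One small remark: you frame the Markov step via the regular conditional law of the pair $(\mathfrak{X},W)$, whereas the paper only quotes the strong Markov property of $\mathfrak{X}$; these coincide here because $W$ is recoverable pathwise from $\mathfrak{X}$ through the invertibility of $\sigma$ in \eqref{eq:SDE}, so your phrasing is sound but slightly heavier than needed.
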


\begin{proof}
\textit{(i)} To alleviate notation somewhat, we write $\mathbb{P}^{\,y}$, $W^{y}(\cdot)$, $X^{y}(\cdot)$\label{p:X^y} and $L^{y}(\cdot)$ for $\mathbb{P}^{\,\mathcal{M}^{y}}$, $W^{\mathcal{M}^{y}}(\cdot)$ $X^{\mathcal{M}^{y}}(\cdot)$ and $L^{\mathcal{M}^{y}}(\cdot)$ $(y\in \mathbb{R}_+^n)$, respectively. The definitions \eqref{eq:um} of $\mathfrak{u}_{\mathcal{M}}$ and \eqref{eq:L=exp} of $L^{\mathcal{M}}$ give 
\begin{align*}
\mathrm{LHS}
=&\ L(t)\, \mathbb{E}^{\mathbb{P}^{\,\mathfrak{X}(t)}}\left[X^{\mathfrak{X}(t)}(T-t)L^{\mathfrak{X}(t)}(T-t)\right]\\
=&\ L(t)\, \mathbb{E}^{\mathbb{P}^{\,\mathfrak{X}(t)}}\left[X^{\mathfrak{X}(t)}(T-t)\exp\left(-\int_0^{T-t}\vartheta'\left(s,\mathfrak{X}^{\mathfrak{X}(t)}\right)\, \mathrm{d}W^{\mathfrak{X}(t)}(s)- \int_0^{T-t} \frac{1}{\,2\,} \,\big|\big|\vartheta\left(s,\mathfrak{X}^{\mathfrak{X}(t)}\right)\big|\big|^2 \mathrm{d}s\right)\right]\\
=&\ L(t)\, \mathbb{E}\left[ \left. X(T) \exp\left(-\int_t^T\vartheta'(s,\mathfrak{X})\, \mathrm{d}W(s)  - \int_t^T \frac{1}{\,2\,} \,\big|\big|\vartheta(s,\mathfrak{X})\big|\big|^2 \mathrm{d}s\right)\right|\mathcal{F}(t)\right]\\
=&\ L(t)\, \mathbb{E}\big[X(T)L(T)\,/\,L(t)\,|\,\mathcal{F}(t)\big]
= \mathrm{RHS}\,,\ \mathbb{P}\mathrm{-a.s.} 
\end{align*}
We note that in the third equality we took advantage of \eqref{eq:Markovian} and of the strong Markov property for the process  $\mathfrak{X}(\cdot)$.

\smallskip
\textit{(ii)} On the strength of the martingale property from (i), the Optional Sampling Theorem gives 
$\mathrm{LHS}
= L(0)X(0)\mathfrak{u}_{\mathcal{M}^{x^*}}(T,\mathfrak{X}(0))
= \mathrm{RHS}\,.$
\end{proof}

\begin{remark}
{\rm 
In the above proof of Theorem  \ref{thm:viscosity1}, the special structure of strongly Markovian admissible systems that we selected in Definition \ref{def:MM}, is indispensable in the context of Proposition \ref{prop:martingale}. On the other hand, the Assumption \ref{asmp1} is important for the existence of the neighborhood  $\mathcal{D}_{\delta}\,$ with the stated properties; see the discussion right below \eqref{eq:hatG}.
}  \qed
\end{remark}

%%%%%%%%%%%%%%%%%%%%%%%%%%%%
\section{Proof of Theorem \ref{thm:viscosity2}: Viscosity Supersolution }
\label{section:supersol}
%%%%%%%%%%%%%%%%%%%%%%%%%%%

The  proof that follows shares many similarities with that    in Section \ref{section:subsol} 
for Theorem \ref{thm:viscosity1}, the counterpart of Theorem \ref{thm:viscosity2}, but also requires the additional Assumption \ref{asmp2} and a much stronger result -- the {\it  Dynamic Programming Principle} (or DPP, Proposition 6.1 below) --  than the martingale property of Proposition \ref{prop:martingale}. Before outlining and presenting the proof,  we explain the reasons for such differences.
 
\medskip
 \noindent
 $\bullet~$ 
 We begin with an idea    similar to that in Section \ref{section:subsol} (with corresponding inequalities in opposite directions, and with $\widehat{\Phi}$ replaced by $\Phi$); however, we cannot proceed in the same way for two reasons:

\smallskip

{\bf (i)} The reverse inequality to \eqref{eq:g}, namely,  
$g(t,s,\mathfrak{X})  \le  \widehat{\mathcal{G}}\big(t,\mathfrak{X}(s)\big)$ 
does not hold in general,  by the definition \eqref{eq:hatL} of $\widehat{\mathcal{L}}$ (recall $g$ from \eqref{eq:g} and $\widehat{\mathcal{G}}$ from \eqref{eq:hatG}). 
Therefore, we cannot obtain 
\begin{equation}\label{eq:g<0}
g(t^*-s,s,\mathfrak{X})<0\quad {\mathrm{for\ all\ }} s\in [0,\rho)\,,\quad {\mathrm{with}}\ g\ {\mathrm{as\ in\ }} \eqref{eq:g} \ {\mathrm{and\ }}  \rho\ {\mathrm{as\ in\ }}  \eqref{eq:rho}\,,
\end{equation}
the reverse inequality to \eqref{eq:g>0}, as we did in Section \ref{section:subsol}. Instead, we need to find an admissible system in ${\mathfrak{M}}(x^*)$ under which \eqref{eq:g<0} holds.

If we still want to argue by contradiction, assuming the reverse inequality to  \eqref{eq:hatG}, then according to the definitions \eqref{eq:hatL} of $\mathcal{L}$ and \eqref{eq:hatG} of $\widehat{\mathcal{G}}$, there exists $a_0\in \mathcal{A}(x_0)$ such that
$
(\varphi_t-{\mathcal{L}_{a_0}}\varphi)(t_0,x_0)<0\,.
$
Plugging in the definition  \eqref{eq:hatL} of $\mathcal{L}_{a_0}$ and comparing the left-hand side of this inequality with the  $g(t^*-s,s,\mathfrak{X})$ of \eqref{eq:g}, 
we see that \eqref{eq:g<0} holds if the $\alpha$ in \eqref{eq:g} is very close to $a_0$ when $s$ is sufficiently small. This accounts for the requirement \eqref{eq:continuous a} of Assumption \ref{asmp2}. Other conditions in Assumption \ref{asmp2} are inspired by Remark \ref{rmk:nonemptyM} aimed for the existence of an admissible system with such $\alpha$.

\smallskip

{\bf (ii)} The reverse inequality of \eqref{eq:xu>E[LXPhi]} with $\widehat{\Phi}$ replaced by $\Phi$, namely  
\begin{equation} \label{eq:xu<E[LXPhi]}
||x^*||_1 \, \mathfrak{u}_{\mathcal{M}^{x^*}}(t^*,x^*)< \mathbb{E} \left[L(\rho)X(\rho)\, {\Phi}\big(t^*-\rho,\mathfrak{X}(\rho)\big)\right],
\end{equation} 
actually holds in general, on the strength of Proposition \ref{prop:martingale} and the definition \eqref{eq:Phi} of ${\Phi}$. Therefore we need to estimate more accurately the value of $\varphi$ on the left-hand side of the counterpart of \eqref{eq:xphi>E[LXphi]},  by using ${\Phi}$ instead of $\mathfrak{u}_{\mathcal{M}^{x^*}}$, so that we arrive at
 \begin{equation} \label{eq:xPhi<E[LXPhi]}
||x^*||_1 \, {\Phi}(t^*,x^*)< \mathbb{E} \left[L(\rho)X(\rho)\, {\Phi}\big(t^*-\rho,\mathfrak{X}(\rho)\big)\right],
\end{equation} 
instead of \eqref{eq:xu<E[LXPhi]}. We then need  the DPP of Proposition \ref{prop:DPP},  to obtain a contradiction to \eqref{eq:xPhi<E[LXPhi]}.

%%%%%%%%%%%%%%%%%%%%%%%%%%%%
\subsubsection{Informal Outline}
%%%%%%%%%%%%%%%%%%%%%%%%%%%

Now we outline the main steps of the proof. 
We   prove  by contradiction, assuming the negation of \eqref{eq:F>=0} in Definition \ref{def:viscosity sol}  with the function $F$ as in \eqref{eq:ourF},  that there exist   $\varphi \in C^{1,2}\left( (0,\infty) \times \mathbb{R}_{+}^n\right)$ and  $(t_0,x_0) \in (0,\infty) \times \mathbb{R}_{+}^n$ such that:  $(t_0,x_0)$ is a strict minimum of $ \, {\Phi}_* - \varphi \,$; the minimal value is equal to zero; and   $\big(\varphi_t-\widehat{\mathcal{L}}\varphi\big)(t_0,x_0)<0$\,. 

Since $\widehat{\mathcal{L}}\varphi= \sup_{a\in  \mathcal{A}(x)}\mathcal{L}_{a} \varphi$ (definition \eqref{eq:hatL}),  there exists $a_0\in \mathcal{A}(x_0)$ such that
\begin{equation}\label{eq:G_a<0}
(\varphi_t-{\mathcal{L}_{a_0}}\varphi)(t_0,x_0)<0\,.
\end{equation}
We take $(x,a)=(x_0,a_0)$ and a sufficiently small $\iota$ in Assumption \ref{asmp2}, and let $\delta$, $\mathbf{s}$ and $\boldsymbol{\theta}$ be the corresponding elements. 
Further, by the definition \eqref{eq:u_*} of ${\Phi}_*$\,, we can take a pair $(t^*,x^*)$ close to $(t_0,x_0)$ such that the nonnegative difference $(\Phi-\varphi)(t^*,x^*)$ is sufficiently small, say less than a small positive constant $C_3^*$ (depending on $\delta$; defined similarly to the $C_3$ of \eqref{eq:C_1}). 

Thanks to Assumption \ref{asmp2},  there exists an admissible system $\mathcal{M}^{x^*}\in \mathfrak{M}(x^*)$ with the functionals $\sigma$ and $\vartheta$ defined by \eqref{eq:Markovian}.
Under this admissible system, we derive \eqref{eq:g<0} from \eqref{eq:G_a<0}, and thus
\begin{equation}\label{eq:xphi<E[LXphi]}
||x^*||_1\,\varphi(t^*,x^*) - \mathbb{E} \left[L(\rho)X(\rho)\varphi \big(t^*-\rho,\mathfrak{X}(\rho)\big)\right] 
= \mathbb{E} \left[\int_0^{\rho} L(s)X(s)\, g\big(t^*-s,s,\mathfrak{X}\big)\, \mathrm{d}s \right] <0\,.
\end{equation}

On the other hand, on the left-hand side of \eqref{eq:xphi<E[LXphi]}   we estimate the real number $\varphi(t^*,x^*)$ from below by ${\Phi}(t^*,x^*)-C_3^*$\,, and the random quantity $\varphi \big(t^*-\rho,\mathfrak{X}(\rho)\big)$ from above by ${\Phi}\big(t^*-\rho,\mathfrak{X}(\rho)\big)-C_2^*$ (for some $\omega$'s in $\Omega$) or  ${\Phi}\big(t^*-\rho,\mathfrak{X}(\rho)\big)$ (for other $\omega$'s in $\Omega$) with $C_2^*$ a small positive constant similar to the $C_2$ of \eqref{eq:C_1}, and then deduce \eqref{eq:xPhi<E[LXPhi]}, which  contradicts the Dynamic Programming Principle   of Proposition \ref{prop:DPP}.
\qed

%%%%%%%%%%%%%%%%%%%%%%%%%%%%
\subsection{The Supersolution Property}
%%%%%%%%%%%%%%%%%%%%%%%%%%%

We are ready now to present the argument  proper. 

\begin{proof}[Proof of Theorem \ref{thm:viscosity2}:]
According to  Definition \ref{def:viscosity sol}\,(ii)  of viscosity supersolution with the function $F$ as in \eqref{eq:ourF}, it suffices to show that for any test function $\varphi \in C^{1,2}\left((0,\infty) \times \mathbb{R}_{+}^n\right)$  and $(t_0,x_0) \in (0,\infty) \times \mathbb{R}_{+}^n$ with 
\begin{equation}\label{eq:min}
({\Phi}_* - \varphi)(t_0,x_0) = 0 < ({\Phi}_* - \varphi)(t,x)\,, \ \ \forall\  (t,x)\in (0,\infty) \times \mathbb{R}_{+}^n 
\end{equation}
 (i.e., such that $(t_0,x_0)$ is a strict minimum of $\,  {\Phi}_* - \varphi  $),
and with $\, {\Phi}_*\,$\label{p:hatPhi_*} the lower-semicontinuous envelope of  $\,  {\Phi} \,$   as in the definition \eqref{eq:u_*}, we have 
$$\big( \varphi_t-\widehat{\mathcal{L}}\varphi\big)  (t_0,x_0)   \ge 0\,.$$
Recalling $\widehat{\mathcal{L}}$ from  \eqref{eq:hatL},  it suffices to establish $(\varphi_t-{\mathcal{L}}_{a}\varphi)  (t_0,x_0)  \ge 0$ for every fixed $a\in \mathcal{A}(x)$. 

\smallskip
{\it  We shall argue this by contradiction, assuming that for some $a_0\in \mathcal{A}(x_0)$ we have }
\begin{equation}\label{eq:G_a}\ g_0:=-\,\mathcal{G}_{a_0}(t_0,x_0)>0\,,\ \ \ \mathrm{where}\ \ \ 
\mathcal{G}_{a}(t,x):=( \varphi_t-{\mathcal{L}}_{a}\varphi) (t,x)\,, \ (a,t,x)\in \mathcal{A}(x) \times (0,\infty)\times \mathbb{R}_+^n\,.
\end{equation}

Under Assumption \ref{asmp1}, there exists a positive number $\delta_1<t_0 \wedge (||x_0||_1 / n)$ such that 
$\mathcal{K}(\cdot)$ is bounded on $\mathcal{D}_{\delta_1}:=(t_0-\delta_1, t_0+\delta_1)\times B_{\delta_1}(x_0)$. 
Let $C>1$ be a constant such that $||\theta||,|a_{ij}|<C$ $(1\le i,j \le n)$ hold for all pairs $(\theta, a=(a_{ij})_{n \times n})\in \mathcal{K}(x)$ and all $x\in B_{\delta_1}(x_0)$.

Since the functions $\mathcal{G}_{a_0}(\cdot,\cdot)$, $\varphi_t(\cdot,\cdot)$ and
\begin{equation}\label{eq:H_{ij}}
H_{ij}(s,y):= D_i\varphi(s,y)\,/\,||y||_1
+ y_iy_jD^2_{ij}\varphi(s,y)\,/\,2\,, \quad (s,y)\in (0,\infty)\times \mathbb{R}_+^n\,,\ 1\le i,j\le n
\end{equation} 
are continuous, there exists under the hypothesis \eqref{eq:G_a},  a positive number $\delta_2<\delta_1$ such that for all $H\in\{\varphi_t, H_{ij}\, (1\le i,j\le n)\}$, we have
\begin{equation}\label{eq:H}
|H(t,x)-H(t_0,x_0)|< g_0\,/\,3n^2 C<g_0\,/\,3\,, \quad  \forall\ (t,x)\in \mathcal{D}_{\delta_2}
:=(t_0-\delta_2, t_0+\delta_2)\times B_{\delta_2}(x_0)\,.
\end{equation}

\begin{lemma}
With $\mathcal{G}_{a}(\cdot,\cdot)$ defined in \eqref{eq:G_a}, the inequality 
\begin{equation}\label{eq:Ga-Ga0}
\left|\mathcal{G}_{a}(t,x)-\mathcal{G}_{a_0}(t_0,x_0)\right| <g_0
\end{equation}
 holds for all $\,(t,x)\in \mathcal{D}_{\delta_2}\,,\ 
a\in \mathcal{A}(x)\,$ with 
\begin{equation}\label{eq:iota}  
\max_{1\le i,j\le n} |a_{ij}-(a_0)_{ij}| < \iota := \delta_2\wedge g_0  \left( 1+3n^2\max_{i,j} \left|H_{ij}(t_0,x_0)\right|\right) ^{-1}.
\end{equation}
Recalling the number $g_0$ from the definition \eqref{eq:G_a}, we have also 
\begin{equation}\label{eq:G<0}
\mathcal{G}_{a}(t,x)<0\quad {\mathrm{for\ all}}\   (a,t,x)\ {\mathrm{in}}\ \eqref{eq:iota}.
\end{equation}
\end{lemma}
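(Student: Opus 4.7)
The plan is to exploit the fact that $\mathcal{L}_a \varphi$ is \emph{linear} in the matrix entries $a = (a_{ij})$, reading $H_{ij}$ in \eqref{eq:H_{ij}} as the (symmetrized) coefficient of $a_{ij}$, so that
$$\mathcal{L}_a \varphi(t,x) \,=\, \sum_{i,j} a_{ij}\,H_{ij}(t,x).$$
Adding and subtracting $\sum_{i,j}(a_0)_{ij}H_{ij}(t,x)$ then gives the decomposition
$$\mathcal{G}_a(t,x)-\mathcal{G}_{a_0}(t_0,x_0) \,=\, \bigl[\varphi_t(t,x)-\varphi_t(t_0,x_0)\bigr] - \sum_{i,j}\bigl(a_{ij}-(a_0)_{ij}\bigr)H_{ij}(t,x) - \sum_{i,j}(a_0)_{ij}\bigl[H_{ij}(t,x)-H_{ij}(t_0,x_0)\bigr],$$
and the task reduces to bounding each of these three pieces by $g_0/3$ on $\mathcal{D}_{\delta_2}$.

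Next I would estimate the three pieces in turn. The first is immediate from \eqref{eq:H}: one has $|\varphi_t(t,x)-\varphi_t(t_0,x_0)|<g_0/(3n^2C)\le g_0/3$. The third, by using $|(a_0)_{ij}|<C$ on $B_{\delta_1}(x_0)$ together with the $n^2$ bounds $|H_{ij}(t,x)-H_{ij}(t_0,x_0)|<g_0/(3n^2C)$ from \eqref{eq:H}, yields a total at most $n^2\cdot C\cdot g_0/(3n^2 C)=g_0/3$. The second is the only substantive piece: from $|a_{ij}-(a_0)_{ij}|<\iota$ and the bound $|H_{ij}(t,x)|\le M+g_0/(3n^2C)$ with $M:=\max_{i,j}|H_{ij}(t_0,x_0)|$, one gets
$$\Bigl|\sum_{i,j}(a_{ij}-(a_0)_{ij})H_{ij}(t,x)\Bigr| \,\le\, n^2 \iota \bigl( M + g_0/(3n^2 C)\bigr),$$
and the precise choice $\iota\le g_0\,(1+3n^2M)^{-1}$ in \eqref{eq:iota} is tailored to force this quantity below $g_0/3$.

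Summing the three contributions establishes \eqref{eq:Ga-Ga0}. Finally, the inequality \eqref{eq:G<0} is an immediate corollary: since by definition $\mathcal{G}_{a_0}(t_0,x_0)=-g_0$, the estimate just proved gives
$$\mathcal{G}_a(t,x) \,<\, \mathcal{G}_{a_0}(t_0,x_0)+g_0 \,=\, 0.$$
The only obstacle is the bookkeeping of constants in piece (II) -- balancing $\delta_2$, $C$, $M$ and $g_0$ so that the crude bound $M+g_0/(3n^2C)$ for $|H_{ij}(t,x)|$ combines with the specific form of $\iota$ to give the needed $g_0/3$; the remainder is routine bilinearity of $\mathcal{L}_{\bullet}\varphi$ and uniform continuity of $\varphi_t$ and the $H_{ij}$ on $\mathcal{D}_{\delta_2}$, already recorded in \eqref{eq:H}.
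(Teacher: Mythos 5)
Your decomposition is not the one the paper uses, and the difference is not cosmetic: it opens a gap in piece (II) that the stated $\iota$ does not close. You pair the matrix increment $(a_{ij}-(a_0)_{ij})$ with $H_{ij}(t,x)$, the coefficient at the \emph{moving} point, and then bound $|H_{ij}(t,x)|$ by $M+g_0/(3n^2C)$ using the continuity estimate \eqref{eq:H}. With $\iota\le g_0(1+3n^2M)^{-1}$ this gives
$$n^2\iota\Bigl(M+\frac{g_0}{3n^2C}\Bigr)\le \frac{n^2 g_0 M}{1+3n^2M}+\frac{g_0^2}{3C(1+3n^2M)},$$
and the first summand alone is already arbitrarily close to $g_0/3$ (since $n^2M/(1+3n^2M)\to 1/3$ as $M\to\infty$), so the positive second summand pushes the total over $g_0/3$. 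Concretely, your bound would require $g_0<C$; but $C$ is defined as a bound on $||\theta||$ and $|a_{ij}|$ for $(\theta,a)\in\mathcal{K}(\cdot)$ near $x_0$, and $g_0=-\mathcal{G}_{a_0}(t_0,x_0)$ depends on the test function $\varphi$, so there is no reason for $g_0<C$ to hold, and it is not assumed.

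The repair is a different telescoping, which is what the paper does: pair $(a-a_0)$ with $H_{ij}$ evaluated at the \emph{fixed} point $(t_0,x_0)$, and pair the spatial increment $[H_{ij}(t_0,x_0)-H_{ij}(t,x)]$ with $a_{ij}$. That is, write
$$\mathcal{G}_a(t,x)-\mathcal{G}_{a_0}(t_0,x_0) = \bigl[\varphi_t(t,x)-\varphi_t(t_0,x_0)\bigr] + \sum_{i,j}a_{ij}\bigl[H_{ij}(t_0,x_0)-H_{ij}(t,x)\bigr] + \sum_{i,j}\bigl[(a_0)_{ij}-a_{ij}\bigr]H_{ij}(t_0,x_0).$$
Now the middle term is bounded by $n^2\cdot C\cdot g_0/(3n^2C)=g_0/3$ using $|a_{ij}|<C$ and \eqref{eq:H}, and the last term by $n^2\iota M<n^2 g_0 M/(1+3n^2M)<g_0/3$ using \eqref{eq:iota} directly, with no need to estimate $|H_{ij}(t,x)|$. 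Your treatment of pieces (I) and (III) and the deduction of \eqref{eq:G<0} from \eqref{eq:Ga-Ga0} are correct.
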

 
\begin{proof}
Plugging  the definition  \eqref{eq:G_a} of ${\mathcal{G}_{a}}$ into the left-hand side of \eqref{eq:Ga-Ga0} yields
\begin{eqnarray}\label{eq:Lambda}
\nonumber {\mathrm{LHS\ of}}\ \eqref{eq:Ga-Ga0} 
&= &  \left|(\varphi_t-{\mathcal{L}_{a}}\varphi)(t,x) - \left( \varphi_t-{\mathcal{L}}_{a_0}\varphi\right) (t_0,x_0)\right| \\ 
&\le& \left|  \varphi_t(t,x) - \varphi_t (t_0,x_0)\right| 
+\left| {\mathcal{L}}_{a}\varphi(t_0,x_0) - {\mathcal{L}}_{a}\varphi(t,x)\right|
+ \left| {\mathcal{L}}_{a_0}\varphi(t_0,x_0)- {\mathcal{L}}_{a}\varphi(t_0,x_0)\right| \\ \nonumber
&=:& \Lambda_1+\Lambda_2+\Lambda_3\,,
\end{eqnarray}
i.e., $\Lambda_j$ $(j = 1, 2, 3)$ denotes the $j$-th term in \eqref{eq:Lambda}.
It suffices to show that $\Lambda_j<g_0\,/\,3$  for all $j$.
 
Since $(t,x) \in \mathcal{D}_{\delta_2}$\,, we can take advantage of the property \eqref{eq:H} and get $\Lambda_1<g_0\,/\,3\,.$ 
Moreover, we notice that ${\mathcal{L}}_{a}\varphi(t,x)=\sum_{i,j}  a_{ij}H_{ij}(t,x) $ (from the definitions \eqref{eq:hatL} of $\mathcal{L}_{a}$ and \eqref{eq:H_{ij}} of $H_{ij}$) and obtain
\begin{equation*}
\Lambda_2
= \left|\sum_{i,j}  a_{ij}\big[H_{ij}(t_0,x_0) -H_{ij}(t,x) \big] \right|
< n^2\cdot C \cdot g_0\,/\,(3n^2C)
=g_0\,/\,3\quad {\mathrm{by}}\  \eqref{eq:H}\,,
\end{equation*}
\begin{equation*}
\Lambda_3
= \left|\sum_{i,j}  \big[(a_0)_{ij}-a_{ij}\big] H_{ij}(t_0,x_0) \right|
< n^2\cdot\iota \cdot \left( \max_{i,j} |H_{ij}(t_0,x_0)| \right) 
< g_0\,/\,3\quad {\mathrm{by}}\  \eqref{eq:iota}\,.
\end{equation*}
This completes the proof.
\end{proof}

\medskip

Take  $x=x_0$ and $a=a_0$ in Assumption \ref{asmp2} with $\iota$ defined in \eqref{eq:iota}. 
Let $\delta$, $\mathbf{s}$, $\boldsymbol{\theta}$ and  $\mathbf{a}$ be the corresponding elements described in Assumption \ref{asmp2}.
We shall now adopt the definitions of $C_1$  from \eqref{eq:C_1} and  introduce the strictly positive constants  
\begin{equation}\label{eq:C^*_3}
C^*_2:= \min_{\partial \mathcal{D}_{\delta}} \, ({\Phi}_* - \varphi )(t,x) > 0\quad (\mathrm{by}\ \eqref{eq:min})\,, \quad C^*_3:=\frac{C^*_2\, e^{-C_1}(||x_0||_1-n\delta)}{2(||x_0||_1 + n\delta)}> 0  \quad (\mathrm{by}\ \eqref{eq:||x||_1})  
 \end{equation}
by analogy with $C_2$ and $C_3$ in \eqref{eq:C_1}.
We observe from the definition \eqref{eq:u_*} of ${\Phi}_*$ that 
$$
\liminf_{(t,x)\to (t_0,x_0)}({\Phi}-\varphi)(t,x) = ({\Phi}_*-\varphi)(t_0,x_0)=0\,,
$$ 
hence there exists $(t^*,x^*)\in \mathcal{D}_{\delta}$ such that 
\begin{equation}\label{eq:t*,x*2}
({\Phi}-\varphi)(t^*,x^*)<C^*_3\,;
\end{equation}
and thanks to Assumption \ref{asmp2}, 
there exists an admissible system $\mathcal{M}^{x^*}\in \mathfrak{M}(x^*)$ with the functionals $\sigma$ and $\vartheta$ defined by \eqref{eq:Markovian}.
The remaining discussion in this section (with the exception of  Proposition \ref{prop:DPP}) will be carried out under this admissible system.

\smallskip

Now we shall adopt the definitions of  $\nu$, $\lambda$ and $\rho$ from \eqref{eq:nu} and  \eqref{eq:rho}. 
For any $0\le s\le \rho$\,, 
we have  $\big(t^*-s,\mathfrak{X}(s)\big) \in \overline{\mathcal{D}_{\delta}}\subset \mathcal{D}_{\delta_2}$ and therefore \eqref{eq:iota}  holds  for $(a,t,x)=\left( \alpha(s,\mathfrak{X}),t^*-s,\mathfrak{X}(s)\right) $ by virtue of \eqref{eq:continuous a} (recall from \eqref{eq:Markovian} that $\alpha(s,\mathfrak{X})=\mathbf{a}(\mathfrak{X}(s))$). 
Therefore, we can apply \eqref{eq:G<0} and obtain
\begin{equation}\label{eq:G(s)<0}
\mathcal{G}_{\alpha(s,\mathfrak{X})}\big(t^*-s,\mathfrak{X}(s)\big)<0\,.
\end{equation}

\medskip
Let us apply now Lemma \ref{lemma:Ito} with $T=t^*$, integrating \eqref{eq:Ito} with respect to $t$ over $[0, \rho]$  and taking the expectation under $\mathbb{P}$, to obtain
\begin{equation}\label{eq:E[LXphi2]}\ 
||x^*||_1\,\varphi(t^*,x^*) 
- \mathbb{E} \left[L(\rho)X(\rho)\varphi \big(t^*-\rho,\mathfrak{X}(\rho)\big)\right] 
=\, \mathbb{E} \left[\int_0^{\rho} L(s)X(s)\, g\big(t^*-s,s,\mathfrak{X}\big)\, \mathrm{d}s \right]  <\, 0\,,
\end{equation}
by \eqref{eq:G(s)<0} and the same reasoning as right below \eqref{eq:E[LXphi]}. Here $g$ is defined in \eqref{eq:hatG}, and thus the quantity $g\big(t^*-s,s,\mathfrak{X}\big)$ is  the left-hand side of \eqref{eq:G(s)<0} with $\ell=\ell^*$.

\smallskip
Notice that  $\,\big(t^*-\nu,\mathfrak{X}(\nu)\big) \in \partial\mathcal{D}_{\delta}\,$ holds
by the definition \eqref{eq:nu} of $\nu$, thus  
\begin{equation}\label{eq:partialD2}
\varphi \big(t^*-\nu,\mathfrak{X}(\nu)\big)
\le {\Phi}_* \big(t^*-\nu,\mathfrak{X}(\nu)\big) - C^*_2\,.
\end{equation}
Plugging \eqref{eq:min}, (\ref{eq:t*,x*2}) and (\ref{eq:partialD2}) into (\ref{eq:E[LXphi2]}) yields
\begin{eqnarray}
\nonumber
&&  0> ||x^*||_1\big[ -C^*_3 + {\Phi}(t^*,x^*) \big]
- \mathbb{E} \left[\bm{1}_{\{\rho=\nu\}}L(\rho)X(\rho)\left({\Phi}_* \big(t^*-\rho,\mathfrak{X}(\rho)\big)-C^*_2\right)\right. \\ \label{eq:<0}
&&\ \ \ \ \ + \left.\bm{1}_{\{\rho\neq\nu\}}L(\rho)X(\rho){\Phi}_* \big(t^*-\rho,\mathfrak{X}(\rho)\big)\right] \\ \nonumber
&&     = 
-C^*_3\, ||x^*||_1+ ||x^*||_1\, {\Phi}(t^*,x^*) - \mathbb{E} \left[L(\rho)X(\rho){\Phi}_* \big(t^*-\rho,\mathfrak{X}(\rho)\big)\right]
+ C^*_2\, \mathbb{E} \big[\bm{1}_{\{\rho=\nu\}}L(\rho)X(\rho)\big].
\end{eqnarray}
On the strength of (\ref{eq:L(rho)}), Lemma \ref{lemma:rho=nu}, the  definition (\ref{eq:C^*_3}) of $C^*_3$, and \eqref{eq:||x||_1}, we obtain now
\begin{eqnarray*}
C^*_2\, \mathbb{E} \big[\bm{1}_{\{\rho=\nu\}}L(\rho)X(\rho)\big]   &\ge& 
  C^*_2\, e^{-C_1}(||x_0||_1 - n\delta)\,  \mathbb{P}\big(\rho=\nu \big)\\
&\ge& C^*_2\, e^{-C_1}(||x_0||_1 - n\delta)\, /\, 2 
= C^*_3(||x_0||_1 + n\delta) > C^*_3\, ||x^*||_1  \,.
\end{eqnarray*}
(This explains why we constructed $C^*_3$ as we did in \eqref{eq:C^*_3}; in fact, setting $C^*_3$ to be any value less than the right-hand side of \eqref{eq:C^*_3} would also work.) 

Substituting this inequality into (\ref{eq:<0}), and recalling  ${\Phi}_*(\cdot,\cdot)\le {\Phi}(\cdot,\cdot)$ from the definition \eqref{eq:u_*} of ${\Phi}_*\,$,  leads now to the inequality
 \begin{equation*}  
||x^*||_1 \, {\Phi}(t^*,x^*)< \mathbb{E} \left[L(\rho)X(\rho)\, {\Phi}\big(t^*-\rho,\mathfrak{X}(\rho)\big)\right] 
\end{equation*} 
of \eqref{eq:xPhi<E[LXPhi]}. However, this inequality contradicts the Dynamic Programming Principle of Proposition \ref{prop:DPP}  right below, so the proof of Theorem \ref{thm:viscosity2} is complete.  
\end{proof}

\begin{proposition}\label{prop:DPP}
{\bf Dynamic Programming Principle} {\rm (\cite{NN13}--\cite{NV})}{\bf :}
For any given $\, (T,x) \in (0, \infty) \times \mathbb{R}^n_+$ and any stopping time $\, \tau\leq T<\infty \,$, we have
\begin{equation*} 
||x||_1 \, {\Phi}(T,x) = \sup_{\mathcal{M}\in{\mathfrak{M}}(x)}
\mathbb{E}^{\mathbb{P}^{\mathcal{M}}}\left[L^{\mathcal{M}}(\tau)X^{\mathcal{M}}(\tau) \, {\Phi}\big(T-\tau,\mathfrak{X}^{\mathcal{M}}(\tau)\big)\right].
\end{equation*}
\end{proposition}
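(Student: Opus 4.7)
Write $V(T,x) := ||x||_1 \Phi(T,x) = \sup_{\mathcal{M}\in\mathfrak{M}(x)} \mathbb{E}^{\mathbb{P}^\mathcal{M}}[L^\mathcal{M}(T) X^\mathcal{M}(T)]$; the claim is that $V(T,x)$ equals the same supremum after replacing the payoff $L^\mathcal{M}(T)X^\mathcal{M}(T)$ by the ``restarted'' value $L^\mathcal{M}(\tau) X^\mathcal{M}(\tau) \Phi(T-\tau,\mathfrak{X}^\mathcal{M}(\tau))$. I would prove the two inequalities separately, following the usual DPP template: the sub-DPP ($\leq$) from disintegration/conditioning, and the super-DPP ($\geq$) from a measurable selection plus concatenation of admissible systems, in the spirit of the cited Nutz--van~Handel / Nutz--Neufeld framework.

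\textbf{Sub-DPP ($\leq$).} Fix $\mathcal{M}\in\mathfrak{M}(x)$ and disintegrate $\mathbb{P}^\mathcal{M}$ along $\mathcal{F}(\tau)$ via regular conditional probabilities. For $\mathbb{P}^\mathcal{M}$-a.e.\ $\omega$, the $\tau(\omega)$-shift of the data of $\mathcal{M}$ -- shifted Brownian motion $W(\tau(\omega)+\cdot)-W(\tau(\omega))$, shifted state $\mathfrak{X}(\tau(\omega)+\cdot)$, correspondingly shifted functionals $\sigma,\vartheta$ -- defines an admissible system $\mathcal{M}_\omega \in \mathfrak{M}(\mathfrak{X}^\mathcal{M}(\tau,\omega))$. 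The exponential form \eqref{eq:L=exp} of the deflator factorizes as $L^\mathcal{M}(T) = L^\mathcal{M}(\tau) \cdot L^{\mathcal{M}_\omega}(T-\tau(\omega))$ after this shift, and the tower property yields
$$\mathbb{E}^{\mathbb{P}^\mathcal{M}}\big[L^\mathcal{M}(T) X^\mathcal{M}(T) \,\big|\, \mathcal{F}(\tau)\big](\omega) = L^\mathcal{M}(\tau,\omega)\, X^\mathcal{M}(\tau,\omega)\, \mathfrak{u}_{\mathcal{M}_\omega}\big(T-\tau(\omega),\, \mathfrak{X}^\mathcal{M}(\tau,\omega)\big).$$
Dominating $\mathfrak{u}_{\mathcal{M}_\omega}\leq\Phi$ and integrating out $\omega$ gives $\mathbb{E}^{\mathbb{P}^\mathcal{M}}[L^\mathcal{M}(T) X^\mathcal{M}(T)] \leq \mathbb{E}^{\mathbb{P}^\mathcal{M}}[L^\mathcal{M}(\tau) X^\mathcal{M}(\tau) \Phi(T-\tau,\mathfrak{X}^\mathcal{M}(\tau))]$; taking $\sup_\mathcal{M}$ proves $\leq$.

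\textbf{Super-DPP ($\geq$).} Fix $\varepsilon>0$. A Jankov--von~Neumann selection applied to the analytic graph of $y\mapsto\mathfrak{M}(y)$ produces a universally measurable map $y\mapsto\mathcal{N}^y\in\mathfrak{M}(y)$ with $\mathfrak{u}_{\mathcal{N}^y}(s,y)\geq \Phi(s,y)-\varepsilon$ for all relevant $(s,y)$. For any $\mathcal{M}\in\mathfrak{M}(x)$ I would paste: use the data of $\mathcal{M}$ on $[0,\tau]$, and conditionally on $\mathcal{F}(\tau)$ run $\mathcal{N}^{\mathfrak{X}^\mathcal{M}(\tau,\omega)}$ on $[\tau,T]$, splicing driving Brownian motions at $\tau$; one must check that the resulting quintuple $\widetilde{\mathcal{M}}$ again lies in $\mathfrak{M}(x)$. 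Reversing the factorization of $L$ used above then gives
$$\mathbb{E}^{\mathbb{P}^{\widetilde{\mathcal{M}}}}\big[L^{\widetilde{\mathcal{M}}}(T) X^{\widetilde{\mathcal{M}}}(T)\big] \geq \mathbb{E}^{\mathbb{P}^\mathcal{M}}\big[L^\mathcal{M}(\tau) X^\mathcal{M}(\tau)\, \Phi(T-\tau,\mathfrak{X}^\mathcal{M}(\tau))\big] - \varepsilon\,||x||_1,$$
where the $\varepsilon$-correction is controlled because $L^\mathcal{M} X^\mathcal{M}$ is a nonnegative $\mathbb{P}^\mathcal{M}$-supermartingale starting at $||x||_1$. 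Taking $\sup_\mathcal{M}$ and letting $\varepsilon\downarrow 0$ delivers $\geq$.

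\textbf{Main obstacle.} The delicate step is the super-DPP: both the measurable selection (which requires establishing analyticity/universal measurability properties of the graph $y\mapsto\mathfrak{M}(y)$ and of $\Phi$) and the rigorous concatenation of two admissible systems into one, producing a single driving $\mathbb{F}$-Brownian motion on a common filtered probability space while preserving the SDE \eqref{eq:SDE} and the integrability condition \eqref{eq:integrability}. The sub-DPP, by contrast, is essentially a conditioning computation once the shift-invariance of the exponential deflator is noted.
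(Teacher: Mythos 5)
The paper does not prove this proposition itself; it simply cites Neufeld--Nutz \cite[Prop.\,2.2, Thm.\,2.4, Rem.\,2.7]{NN13} and Nutz--van Handel \cite[Thm.\,2.3]{NV}, which establish the DPP by precisely the two-sided scheme you outline -- conditioning via regular conditional probabilities together with the shift-invariance of the exponential deflator for the ``$\leq$'' half, and Jankov--von Neumann measurable selection plus pasting of admissible systems for the ``$\geq$'' half. Your sketch correctly identifies both the structure and the genuinely delicate points (analyticity of the graph of $y\mapsto\mathfrak{M}(y)$, stability under conditioning and pasting), which is exactly the content those references supply in an abstract sublinear-expectation framework; so your approach matches what the paper relies on.
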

\begin{proof}
We refer to \cite[Proposition 2.2, Theorem 2.4, Remark 2.7]{NN13} and \cite[Theorem 2.3]{NV}.
\end{proof}

%%%%%%%%%%%%%%%%%%%%%%%%
\section{Viscosity Characterization of the Arbitrage Function}\label{section:u}
%%%%%%%%%%%%%%%%%%%%%%%%

Let us go back to the arbitrage function $\mathfrak{u}$ of \eqref{eq:u}. As a consequence of the minimality result
Theorem \ref{thm:min} below,  if $\Phi$ of \eqref{eq:Phi} is a classical supersolution of \eqref{eq:PDE}, then the  function $\mathfrak{u}$ coincides with  $\Phi$ and hence is the smallest nonnegative classical supersolution of the Cauchy problem of \eqref{eq:PDE}, \eqref{eq:initial cond}; in fact, we have $\mathfrak{u}\equiv \Phi$ if  $\Phi$ is only continuous (see Theorem \ref{thm:u=Phi} below). 

\begin{theorem}\label{thm:min}{\em{ (\eqref{eq:u>=Phi} and \cite[Proposition 2]{FK11}) }}
For any nonnegative classical supersolution $U$ of the C{\scriptsize AUCHY} problem \eqref{eq:PDE}, \eqref{eq:initial cond}, we have
$$U(T,x)\ge \mathfrak{u}(T,x)\ge \Phi(T,x)\ge \widehat{\Phi}(T,x)>0\,, \ \  \forall\ (T,x)\in[0, \infty)\times \mathbb{R}_{+}^n\,.$$
\end{theorem}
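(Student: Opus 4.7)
The last three inequalities in the chain are the content of \eqref{eq:u>=Phi}, so the only new estimate is $U(T,x)\ge \mathfrak{u}(T,x)$. The plan is to use the supersolution $U$ to manufacture a single nonanticipative investment rule $\Pi\in \mathfrak{P}$, depending only on the canonical path and not on the particular admissible system, that super-replicates the total market capitalization starting from initial wealth $U(T,x)\,||x||_1$, uniformly over all $\mathcal{M}\in \mathfrak{M}(x)$. Once such a $\Pi$ is exhibited, $r = U(T,x)$ becomes admissible in the infimum defining $\mathfrak{u}(T,x)$ in \eqref{eq:u}, and the desired bound follows.

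The central device is the auxiliary function $V(t,y):= ||y||_1\, U(T-t,y)$ on $[0,T]\times \mathbb{R}_+^n$. Fix $\mathcal{M}\in \mathfrak{M}(x)$ and apply It\^o's rule to $V(t,\mathfrak{X}(t))$ along \eqref{eq:SDE}. Using $D_iV = U + ||y||_1 D_iU$, $D^2_{ij}V = D_iU+D_jU+||y||_1 D^2_{ij}U$, $V_t = -||y||_1 U_t$, and $\mathrm{d}\langle X_i,X_j\rangle = X_iX_j\alpha_{ij}(t,\mathfrak{X})\,\mathrm{d}t$, a direct rearrangement yields
\begin{equation*}
\mathrm{d}V(t,\mathfrak{X}(t)) \,=\, \sum_i D_i V(t,\mathfrak{X}(t))\,\mathrm{d}X_i(t) \,-\, X(t)\bigl(U_t-\mathcal{L}_{\alpha(t,\mathfrak{X})}U\bigr)(T-t,\mathfrak{X}(t))\,\mathrm{d}t,
\end{equation*}
with $\mathcal{L}_a$ as in \eqref{eq:hatL}. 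Because $\alpha(t,\mathfrak{X}(t))\in \mathcal{A}(\mathfrak{X}(t))$ by \eqref{eq:integrability}, the supersolution property gives $U_t-\mathcal{L}_\alpha U\ge U_t-\widehat{\mathcal{L}}U\ge 0$; since $X(t)>0$, the drift term is nonpositive, whence pathwise
\[\mathrm{d}V(t,\mathfrak{X}(t))\;\le\;\sum_i D_iV(t,\mathfrak{X}(t))\,\mathrm{d}X_i(t).\]

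Now define the investment rule by prescribing that the number of shares held in the $i$-th stock be $N_i(t,\omega):= D_iV(t,\omega(t))$ (with any residual wealth parked in the money market), and start with initial wealth $Z(0):= V(0,x) = U(T,x)\,||x||_1$. The self-financing condition then reads $\mathrm{d}Z(t)=\sum_i N_i(t)\,\mathrm{d}X_i(t)$; subtracting the previous inequality shows that $Z(t)-V(t,\mathfrak{X}(t))$ is pathwise nondecreasing and vanishes at $t=0$, whence $Z(t)\ge V(t,\mathfrak{X}(t))$ on $[0,T]$, $\mathbb{P}^{\mathcal{M}}$-a.s. In particular, at $t=T$,
\[Z(T) \;\ge\; V(T,\mathfrak{X}(T)) \;=\; U(0,\mathfrak{X}(T))\,X^{\mathcal{M}}(T) \;\ge\; X^{\mathcal{M}}(T),\]
the last step by the initial condition $U(0,\cdot)\ge 1$ that a supersolution satisfies. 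Since $N_i$ is a deterministic functional of the path, the corresponding proportional rule $\Pi_i := N_i X_i/Z$ is model-independent and serves for every $\mathcal{M}\in\mathfrak{M}(x)$, delivering $\mathfrak{u}(T,x)\le U(T,x)$.

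The main technical obstacle is verifying $\Pi\in \mathfrak{P}$, i.e., the integrability condition \eqref{eq:Pi} uniformly over all admissible systems; this reduces to controlling $|N_i(t,\omega)| = |D_iV(t,\omega(t))|$ alongside the progressively measurable $\vartheta,\alpha$, and follows from the local boundedness of $D_iU$ and $U$ on compact subsets of $(0,\infty)\times \mathbb{R}_+^n$ (ensured by $U\in C^{1,2}$) together with the integrability built into \eqref{eq:integrability}. A secondary subtlety concerns keeping $Z(t)$ strictly positive so that proportions $\Pi_i$ are defined; working directly with the number-of-shares formulation $(N_i,\text{cash})$ sidesteps the issue, and in any case $Z\ge V\ge 0$ by construction. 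These technicalities aside, the argument is a pointwise verification: the supersolution property, which holds for \emph{every} $a\in \mathcal{A}(x)$ through $\widehat{\mathcal{L}} = \sup_a \mathcal{L}_a$, is precisely robust enough to cover the whole family $\mathfrak{M}(x)$ simultaneously.
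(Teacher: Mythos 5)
Your plan for the new inequality $U\ge\mathfrak{u}$ shares the paper's central idea (manufacture a super-replicating strategy from the supersolution $U$), but the execution differs in an interesting way, and it also misses one of the two steps in the paper's argument.

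\textbf{Where your argument is a genuine improvement in presentation.} The paper's proof of $U\ge\mathfrak{u}$ (its Theorem \ref{thm:U>=u}) uses the \emph{proportional} rule $\pi^U_i=\omega_i D_i\log U + \omega_i/||\omega||_1$ and compares the drifts of $\log\!\big(L(t)Z^{v,\pi^U}(t)\big)$ and $\log\Xi(t)$, where $\Xi=LXU$. Your version replaces this with the auxiliary function $V(t,y)=||y||_1 U(T-t,y)$ and the \emph{number-of-shares} holding $N_i=D_iV$, and proves the pathwise inequality $Z(t)\ge V(t,\mathfrak{X}(t))$ directly. Since $\Xi=L\cdot V$, dividing the paper's inequality $LZ\ge\Xi$ by $L>0$ recovers exactly your $Z\ge V$; so the two conclusions are equivalent, but your calculation bypasses both the deflator $L$ and the logarithm. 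That is cleaner. Note, however, that the underlying strategies are genuinely \emph{different}: your rule holds a fixed number of shares $N_i=D_iV(t,\mathfrak{X}(t))$, whereas the paper's $\pi^U$ fixes the \emph{proportion}, so its implied share count is $Z\cdot D_iV/V$, which drifts away from $D_iV$ as soon as $Z>V$.

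\textbf{Where the gap is.} The paper's proof of Theorem \ref{thm:min} consists of \emph{two} steps, and your proposal only supplies one of them. Before constructing any strategy, the paper's Theorem \ref{thm:U>=Phi} shows $U\ge\Phi>0$ purely by observing that $\Xi(t)=L(t)X(t)U(T-t,\mathfrak{X}(t))$ is a nonnegative supermartingale (via Lemma \ref{lemma:Ito} and the supersolution inequality \eqref{eq:Ut-LU}); from $||x||_1 U(T,x)=\mathbb{E}[\Xi(0)]\ge\mathbb{E}[\Xi(T)]\ge||x||_1\mathfrak{u}_{\mathcal M}(T,x)$ one gets $U\ge\mathfrak{u}_{\mathcal M}$ and hence $U\ge\Phi>0$. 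Your proposal skips this step entirely, yet it is precisely what makes the strategy-construction step legitimate: you need $U>0$ everywhere to guarantee $V>0$ and hence $Z(t)\ge V(t,\mathfrak{X}(t))>0$, which is what lets you pass from the number-of-shares description to a bona fide proportional rule $\Pi\in\mathfrak{P}$. You flag this as a ``secondary subtlety'' and claim the number-of-shares formulation sidesteps it, but it does not: the set $\mathfrak{P}$ in Definition \eqref{eq:Pi} and the very definition \eqref{eq:u} of $\mathfrak{u}$ are phrased in terms of proportions, and a proportional-rule wealth process is a stochastic exponential that is absorbed at zero — so the translation is not cosmetic. Without first establishing $U>0$ (equivalently $V>0$), the argument is circular, since the only strict positivity you have a priori is $\mathfrak{u}\ge\Phi>0$, and $U\ge\mathfrak{u}$ is exactly what you are trying to prove. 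A secondary worry with the same root cause: $\Pi_i=N_iX_i/Z$ is defined through the stochastic integral $Z=v+\int\sum N_i\,dX_i$, which is specified only $\mathbb{P}^{\mathcal M}$-a.s.\ under each $\mathcal M$ and so is not manifestly a single nonanticipative functional on $\Omega$; the paper avoids this by taking $\pi^U$ to be an \emph{explicit} Markovian function of $(t,\omega(t))$, with no stochastic integral appearing in its definition. Inserting the paper's supermartingale step at the front, and then quoting or deriving the explicit rule $\pi^U$, would close both gaps while keeping your neater pathwise comparison $Z\ge V$ as the engine.
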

\begin{proof}
We adopt the idea from the proof in \cite[Proposition 2,\ (5.3)--(5.15)]{FK11}; the detailed proof is provided   in section \ref{appendix:min}. 
\end{proof}

\begin{theorem}\label{thm:u=Phi}
The arbitrage function $\mathfrak{u}$ coincides with the function $\,\Phi$ of \eqref{eq:Phi} if $\,\Phi$ is continuous.
\end{theorem}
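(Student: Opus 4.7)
The inequality $\mathfrak{u}\ge\Phi$ is already supplied by Theorem \ref{thm:min} (and by \eqref{eq:u>=Phi}), so the entire content of Theorem \ref{thm:u=Phi} is the reverse bound $\mathfrak{u}(T,x)\le\Phi(T,x)$ under the continuity assumption on $\Phi$. The plan is to construct, for each fixed $(T,x)\in(0,\infty)\times\mathbb{R}_+^n$ and each $\varepsilon>0$, a single investment rule $\Pi\in\mathfrak{P}$ whose wealth process starting from $(\Phi(T,x)+\varepsilon)\,\|x\|_1$ dominates $X^{\mathcal{M}}(T)$ at time $T$, $\mathbb{P}^{\mathcal{M}}$-a.s., for every $\mathcal{M}\in\mathfrak{M}(x)$. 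The Dynamic Programming Principle of Proposition \ref{prop:DPP} will drive the construction, while the continuity of $\Phi$ is exactly the regularity needed to aggregate the family of $\mathcal{M}$-dependent hedges into a single universal one.

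The first step is to read the DPP as a supermartingale property. For every $\mathcal{M}\in\mathfrak{M}(x)$ I would show that the process
$$V^{\mathcal{M}}(t)\,:=\,L^{\mathcal{M}}(t)\,X^{\mathcal{M}}(t)\,\Phi\!\bigl(T-t,\mathfrak{X}^{\mathcal{M}}(t)\bigr),\qquad 0\le t\le T,$$
is a $\mathbb{P}^{\mathcal{M}}$-supermartingale on $[0,T]$ with $V^{\mathcal{M}}(0)=\|x\|_1\Phi(T,x)$ and $V^{\mathcal{M}}(T)=L^{\mathcal{M}}(T)X^{\mathcal{M}}(T)$. This is a routine deduction from Proposition \ref{prop:DPP} applied between any two stopping times $\sigma\le\tau\le T$ (bootstrapped to an $\mathcal{F}(\sigma)$-conditional statement by choosing admissible systems that agree with $\mathcal{M}$ up to time $\sigma$ and are re-optimized thereafter). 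Continuity of $\Phi$ is what allows the conditional version of the supremum in the DPP to be evaluated pathwise along $\mathfrak{X}^{\mathcal{M}}$.

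Armed with this supermartingale property I would then proceed by optional decomposition. Under each $\mathbb{P}^{\mathcal{M}}$ the Doob--Meyer decomposition of $V^{\mathcal{M}}/L^{\mathcal{M}}$, combined with martingale representation against $W^{\mathcal{M}}$, gives
$$X(t)\,\Phi\!\bigl(T-t,\mathfrak{X}(t)\bigr)\,=\,\|x\|_1\,\Phi(T,x)\,+\,(\text{gains from an admissible }\Pi)\,-\,A^{\mathcal{M}}(t)$$
with some nondecreasing adapted $A^{\mathcal{M}}$. The entire task is then to exhibit a single $\Pi\in\mathfrak{P}$ for which such a decomposition holds simultaneously under every $\mathbb{P}^{\mathcal{M}}$. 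Once that is done, evaluating at $t=T$ and using $\Phi(0,\cdot)=1$ yields the super-replication $Z^{\Phi(T,x)\|x\|_1,\Pi}(T)\ge X^{\mathcal{M}}(T)$ under every $\mathcal{M}$, and the definition \eqref{eq:u} forces $\mathfrak{u}(T,x)\le\Phi(T,x)$; the $\varepsilon$-cushion is only needed to absorb measurability slack in the aggregation.

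The main obstacle is unmistakably this aggregation step: upgrading the $\mathcal{M}$-pointwise Doob--Meyer decomposition into a single predictable $\Pi$ that works for the whole family $\{\mathbb{P}^{\mathcal{M}}\}_{\mathcal{M}\in\mathfrak{M}(x)}$. Without the classical regularity enjoyed in \cite{FK11}, the gradient $D\Phi$ is not directly available to read off a hedge from \textsc{Itô}'s formula. I see two routes. Either one invokes a quasi-sure optional decomposition in the vein of \textsc{Nutz} (taking care with the universal completion of $\mathbb{F}$), for which continuity of $\Phi$ provides the needed path-regularity of $V^{\mathcal{M}}$; or one approximates $\Phi$ from above by a decreasing sequence of classical supersolutions (constructed, for instance, by sup-convolution regularization adapted to the nonlinear operator $\widehat{\mathcal{L}}$ and then corrected to remain supersolutions) and applies Theorem \ref{thm:min} to each approximant, sending the approximation parameter to zero to conclude $\mathfrak{u}\le\Phi$. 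In either route, continuity of $\Phi$ is the precise structural hypothesis that bridges the gap left by the absence of $C^{1,2}$-smoothness.
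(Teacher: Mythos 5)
Your proposal correctly isolates the heart of the matter: \eqref{eq:u>=Phi} already gives $\mathfrak{u}\ge\Phi$, so everything hinges on the reverse inequality, and the supermartingale property of $L(\cdot)X(\cdot)\Phi(T-\cdot,\mathfrak{X}(\cdot))$ under every admissible system (via the DPP, i.e.\ \cite[Theorem 2.3]{NV}) is exactly the first structural input the paper uses. Where you diverge is that you devote the bulk of your argument to an optional-decomposition / aggregation route and flag, quite correctly, that gluing the $\mathcal{M}$-dependent Doob--Meyer hedges into a single predictable $\Pi$ is a genuine obstacle. The paper never confronts that obstacle, because it never needs to produce a single aggregated hedge at the level of $\Phi$ itself: it simply mollifies $\Phi$ to a \emph{classical} positive supersolution $U_\varepsilon$ with $U_\varepsilon(T,x)\le\Phi(T,x)+\varepsilon$ (the mollification preserves the supermartingale property and, being $C^{1,2}$, converts it into a classical supersolution; this is the argument borrowed from \cite[Theorem 1]{FK11}), and then invokes Theorem \ref{thm:min} -- whose proof \emph{already} manufactures the universal superhedging strategy $\pi^U$ of \eqref{eq:pi^U} from any classical supersolution $U$ -- to conclude $\mathfrak{u}(T,x)\le U_\varepsilon(T,x)\le\Phi(T,x)+\varepsilon$. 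This is precisely your second route, so you did identify the winning strategy, but you hedge between it and the harder aggregation route without committing. One small technical caveat on your second route as stated: you propose a sup-convolution regularization ``corrected to remain a supersolution,'' whereas the paper's mechanism is plain mollification filtered through the supermartingale characterization, which is what makes the $C^{1,2}$ approximant a classical supersolution directly; sup-convolution is the natural device for \emph{sub}solutions, and going that way would require a separate correction argument that the paper avoids. Also note that the paper needs only $U_\varepsilon(T,x)\le\Phi(T,x)+\varepsilon$ at the single fixed point $(T,x)$, not global domination from above, which eases the regularization further.
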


This theorem is proved right below. Combining it with Theorems \ref{thm:viscosity1}, \ref{thm:viscosity2} and \ref{thm:min}, and recalling Remark \ref{rmk:u*} and $\,\widehat{\Phi}^*$ from \eqref{eq:u^*},   gives the following characterizations of the arbitrage function $\mathfrak{u}$\,.

\begin{corollary}\label{coro:u sol}
Suppose that the conditions in Theorem \ref{thm:viscosity2} are in force and the function $\,\Phi$ is continuous. 

Then the arbitrage function $\mathfrak{u}$ is a viscosity supersolution of the HJB equation \eqref{eq:PDE} subject to
the initial condition \eqref{eq:initial cond}.
If furthermore $\,\Phi\equiv \widehat{\Phi}^*$, then $\mathfrak{u}$ is a viscosity solution of \eqref{eq:PDE} subject to \eqref{eq:initial cond}.

\smallskip

If in addition $\mathfrak{u}$ is of class $C\left([0, \infty)\times \mathbb{R}_{+}^n\right)\, \cap\, C^{1,2}\left((0, \infty)\times \mathbb{R}_{+}^n\right)$, 
then it is the smallest nonnegative classical (super)solution of the C{\scriptsize AUCHY} problem \eqref{eq:PDE}, \eqref{eq:initial cond}.
\end{corollary}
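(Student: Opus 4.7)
The plan is to stitch together the identifications and viscosity characterizations already established in the paper, namely Theorems \ref{thm:u=Phi}, \ref{thm:viscosity2}, \ref{thm:viscosity1} (with Remark \ref{rmk:u*}) and \ref{thm:min}.

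First I would use the standing continuity hypothesis on $\Phi$ together with Theorem \ref{thm:u=Phi} to obtain the pointwise identification $\mathfrak{u}\equiv\Phi$ on $[0,\infty)\times\mathbb{R}_+^n$. The assumptions of Theorem \ref{thm:viscosity2} being in force, that theorem tells us $\Phi$ is a viscosity supersolution of \eqref{eq:PDE} satisfying $\Phi(0,\cdot)=1$; the supersolution property and the initial condition \eqref{eq:initial cond} then transfer verbatim to $\mathfrak{u}$, giving the first assertion.

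Under the additional hypothesis $\Phi\equiv\widehat{\Phi}^*$, I would invoke Theorem \ref{thm:viscosity1} to conclude that $\widehat{\Phi}$ is a viscosity subsolution of \eqref{eq:PDE}, and appeal to Remark \ref{rmk:u*} to transport this property to the upper-semicontinuous envelope $\widehat{\Phi}^*$. Combined with the chain $\mathfrak{u}\equiv\Phi\equiv\widehat{\Phi}^*$ from the previous step, this shows $\mathfrak{u}$ is simultaneously a viscosity subsolution and a viscosity supersolution of \eqref{eq:PDE}, hence a viscosity solution, with the initial datum $\mathfrak{u}(0,\cdot)=1$ already built in.

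For the third assertion, suppose in addition that $\mathfrak{u}\in C\left([0,\infty)\times\mathbb{R}_+^n\right)\cap C^{1,2}\left((0,\infty)\times\mathbb{R}_+^n\right)$. Continuity gives $\mathfrak{u}_*\equiv\mathfrak{u}$, so in the supersolution inequality \eqref{eq:F>=0} I can take the test function $\varphi=\mathfrak{u}$ at any interior point $(t_0,x_0)$, at which $\mathfrak{u}_*-\varphi\equiv 0$ trivially has a minimum. The viscosity supersolution inequality therefore upgrades immediately to the classical inequality $\left(\mathfrak{u}_t-\widehat{\mathcal{L}}\mathfrak{u}\right)(t_0,x_0)\ge 0$, so $\mathfrak{u}$ is a classical supersolution. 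Minimality among nonnegative classical supersolutions of the Cauchy problem is then precisely the content of Theorem \ref{thm:min}; in the solution case (second assertion above), the analogous argument using subsolution test functions gives the reverse inequality and yields a classical solution.

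I do not anticipate a real mathematical obstacle: the substantive work has already been carried out in Theorems \ref{thm:u=Phi}, \ref{thm:viscosity1}, \ref{thm:viscosity2} and \ref{thm:min}, and the corollary is essentially a bookkeeping exercise. The only mildly delicate observation is the elementary upgrade from viscosity to classical (super)solution for a $C^{1,2}$ function, which rests on nothing more than the continuity of $\mathfrak{u}$ needed to identify $\mathfrak{u}=\mathfrak{u}_*=\mathfrak{u}^*$.
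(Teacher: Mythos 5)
Your proof is correct and follows exactly the route the paper intends (the paper itself simply cites Theorems \ref{thm:viscosity1}, \ref{thm:viscosity2}, \ref{thm:min}, \ref{thm:u=Phi} and Remark \ref{rmk:u*} for this corollary without spelling out the combination). The one step worth flagging, which you correctly identify, is the upgrade from viscosity to classical (super/sub)solution for a $C^{1,2}$ function by taking $\varphi=\mathfrak{u}$ as test function; since the paper's Definition \ref{def:viscosity sol} allows non-strict minima (the ``(strict)'' is parenthetical), this direct substitution is legitimate, though the more cautious variant of perturbing $\varphi$ by $\pm\epsilon\left(|t-t_0|^2+|x-x_0|^2\right)$ and letting $\epsilon\downarrow 0$ would serve equally well if strictness were required.
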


\begin{remark}
{\rm
If a robust strong arbitrage relative to the market exists on some time horizon $[0,T ]$ for some initial capitalization $x$ (see Remark \ref{rmk:u<1}), then $\mathfrak{u}(T,x)<1$. This amounts to a failure of uniqueness of classical/viscosity solutions for the \textsc{Cauchy} problem of \eqref{eq:PDE}, \eqref{eq:initial cond}, since the constant  $u\equiv 1$ is always a (trivial) solution to this problem.

We refer the reader to \cite[p.\,2205]{FK11} or to \cite{Lyons}, for an interpretation of Theorem \ref{thm:u=Phi}. 
} \qed
\end{remark}

\begin{proof}[Proof of Theorem \ref{thm:u=Phi}:] 
Let \,$\mathcal{U}$\label{p:mathcalU} be the collection of positive classical supersolutions of the Cauchy problem \eqref{eq:PDE}, \eqref{eq:initial cond}, and \,$\breve{\mathcal{U}}$\label{p:breve{mathcalU}} the collection of continuous functions \label{p:breveU}$\breve{U}:[0,\infty)\times \mathbb{R}^n_+\to \mathbb{R}_+$ that satisfies \eqref{eq:initial cond}  and that the process $L(t)X(t)\Phi(T-t,\mathfrak{X}(t))$ is
a supermartingale under every admissible system.
Note that $\Phi\in \breve{\mathcal{U}}$ by virtue of  \cite[Theorem 2.3]{NV}. 

Following the idea in \cite[Theorem 1]{FK11}, we have  for $T=0$\, the identities $\mathfrak{u}(0, x)=1=\Phi(0, x)$ for all $x \in \mathbb{R}^n_+$ by the initial condition \eqref{eq:initial cond}. Now we fix an arbitrary pair $(T, x) \in (0,\infty)\times \mathbb{R}^n_+$\,. For every $\varepsilon >0$\,, there exists a mollification \label{p:U_epsilon}$U_{\varepsilon}\in \mathcal{U}$ of the function $\Phi$ with $0<U_{\varepsilon}(T , x) \le \Phi(T , x)+\varepsilon$.  
Combining with Theorem \ref{thm:min} gives
$$\mathfrak{u}(T , x) \le U_{\varepsilon}(T , x)\le \Phi(T, x) + \varepsilon\,.$$
Since $\varepsilon >0$ is arbitrary, this
leads to $\mathfrak{u}(T, x)\le \Phi(T, x)$. On the other hand, the reverse inequality $\mathfrak{u}(T , x)\ge \Phi(T , x)$ holds on the
strength of \eqref{eq:u>=Phi}. Hence, $\mathfrak{u}(T, x)=\Phi(T, x)$ on $[0,\infty)\times \mathbb{R}^n_+$\,.
\end{proof}

\begin{remark}
{\rm 
With slight modifications  our approach can also show that, under appropriate conditions described in Theorems \ref{thm:viscosity1}, \ref{thm:viscosity2} and Corollary \ref{coro:u sol} but now with 
\begin{equation*} 
F(t,x,r,p,q) \,=\, - \, \frac{1}{\,2\,}\sup_{a\in \mathcal{A}(x)} \left( \sum_{i,j}  x_i x_j a_{ij} q_{ij} \right), \ \ \mathrm{for}\ \  q=(q_{ij})_{1\le i,j\le n}\,,\ \ p=(p_1,\dots,p_n)',
\end{equation*}
and $a=(a_{ij})_{1\le i,j\le n}$\,, the functions $${\Psi}(T,x)=||x||_1\,{\Phi}(T,x)\,,\ \  \qquad \widehat{\Psi}(T,x)=||x||_1\,\widehat{\Phi}(T,x)$$ and $$ 
\mathfrak{v}(T,x) := ||x||_1 \,\mathfrak{u}(T,x)\,, \ \ \ \ (T,x)\in [0,\infty)\times \mathbb{R}^n_+$$  
are classical/viscosity (super/sub)solutions of an HJB equation simpler than \eqref{eq:PDE} -- namely, the {\it P{\scriptsize UCCI}-maximal type equation} 
\begin{equation} \label{eq:Pucci}
u_t (t,x) -\frac{1}{\,2\,}\sup_{a\in \mathcal{A}(x)} \left( \sum_{i,j}  x_i x_j a_{ij} D^2_{ij} u    (t,x)  \right)  = 0\,,\ \ \ \ (t,x) \in (0, \infty)\times \mathbb{R}_{+}^n 
\end{equation} 
subject to the initial condition $u(0,x)=||x||_1$\,,   
and   are dominated by any nonnegative classical supersolution of the \textsc{Cauchy} problem \eqref{eq:Pucci}, \eqref{eq:initial cond}. }  \qed
\end{remark} 

\subsection{Sufficient Conditions for $\mathfrak{u}\equiv\Phi\equiv \widehat{\Phi}$ to be a classical supersolution of (\ref{eq:PDE})}

Now let us provide some sufficient conditions under which we have $\mathfrak{u}\equiv\Phi\equiv \widehat{\Phi}$, and this function is a classical solution of \eqref{eq:PDE} -- thus also the smallest nonnegative classical (super)solution of the \textsc{Cauchy} problem \eqref{eq:PDE}, \eqref{eq:initial cond} by virtue of Theorem \ref{thm:min}. 

In particular, via the discussions below, we will see that one sufficient condition is the following specific requirements on the  Knightian  uncertainty $\mathbb{K}$\,.
 
\begin{proposition}\label{prop:suff}
Suppose that there exist locally L{\scriptsize IPSCHITZ} functions $\mathbf{s}: \mathbb{R}_+^n\to {\mathrm{GL}}(n)$ and  $\boldsymbol{\theta}: \mathbb{R}^n_+\to \mathbb{R}^n$, and subsets $\mathcal{R}(y)$ $(y\in \mathbb{R}_+^n)$ of $\,\mathbb{R}$  such that 
the functions  $\mathbf{s}(\cdot)$ and $\mathbf{b}(\cdot):=\mathbf{s}(\cdot)\boldsymbol{\theta}(\cdot)$ are linearly growing $($i.e., satisfy  \eqref{eq:linearGrowth}$)$  and   with $
\mathbf{a}(y) :=\mathbf{s}(y)\mathbf{s}'(y)$ $(y\in \mathbb{R}^n_+)$  we have
\begin{equation}\label{eq:(theta,a)}
\big(\boldsymbol{\theta}(y),\mathbf{a}(y)\big)\in \mathcal{K}(y)\,,\quad \mathcal{A}(y)=\{r\cdot\mathbf{a}(y): r\in \mathcal{R}(y)\}\,,\quad \min\, \mathcal{R}(y)=1 \quad {\mathrm{for\ all\ }}  y\in \mathbb{R}^n_+\,.
\end{equation}
Then $\mathfrak{u}\equiv \Phi\equiv \widehat{\Phi}$ 
is the smallest nonnegative classical (super)solution of the C{\scriptsize AUCHY} problem \eqref{eq:PDE}, \eqref{eq:initial cond}, and the smallest nonnegative classical (super)solution of the C{\scriptsize AUCHY} problem \eqref{eq:a},  \eqref{eq:initial cond}.
\end{proposition}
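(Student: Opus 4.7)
The plan is to exhibit a single strongly Markovian admissible system $\mathcal{M}_o^{\,x}$ whose arbitrage function coincides with $\mathfrak{u}$, $\Phi$ and $\widehat{\Phi}$ and which classically solves \eqref{eq:PDE}. By Remark \ref{rmk:nonemptyM}(i), the local \textsc{Lipschitz} and linear-growth assumptions on $(\mathbf{s},\boldsymbol{\theta})$ together with $(\boldsymbol{\theta}(y),\mathbf{a}(y))\in\mathcal{K}(y)$ yield, for each $x\in\mathbb{R}_+^n$, a strongly Markovian $\mathcal{M}_o^{\,x}\in\widehat{\mathfrak{M}}(x)$ driven by those coefficients; setting $u(t,x):=\mathfrak{u}_{\mathcal{M}_o^{\,x}}(t,x)$, the invocation of \cite[Theorem 4.7]{R13} (as in Theorem \ref{thm:Knightian}) places $u$ in $C^{1,2}\big((0,\infty)\times\mathbb{R}_+^n\big)$ and makes it a classical solution of the \emph{linear} Cauchy problem \eqref{eq:a}, \eqref{eq:initial cond}.

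The pivotal step is to upgrade $u$ from a solution of the linear PDE \eqref{eq:a} to a solution of the nonlinear HJB equation \eqref{eq:PDE}. I would do this through a sign analysis of $u_t$. A direct It\^o computation from \eqref{eq:SDE} and \eqref{eq:L=exp} shows that each $L^{\mathcal{M}_o^{\,x}}(\cdot)X_i^{\mathcal{M}_o^{\,x}}(\cdot)$ is a nonnegative local martingale (the $\vartheta$-drifts cancel), hence a $\mathbb{P}^{\mathcal{M}_o^{\,x}}$-supermartingale; summing over $i$ and recalling \eqref{eq:um}, the map $t\mapsto u(t,x)$ is nonincreasing, so $u_t\le 0$ on $(0,\infty)\times\mathbb{R}_+^n$. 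Combined with $\mathcal{A}(x)=\{r\mathbf{a}(x):r\in\mathcal{R}(x)\}$, $\mathcal{R}(x)\subseteq[1,\infty)$, and the linearity of $a\mapsto\mathcal{L}_a u$, this forces
$$\widehat{\mathcal{L}}u(t,x)\,=\,\sup_{r\in\mathcal{R}(x)}r\,\mathcal{L}_{\mathbf{a}(x)}u(t,x)\,=\,\sup_{r\in\mathcal{R}(x)}r\,u_t(t,x)\,=\,u_t(t,x)\,,$$
the last equality because the supremum of $r\, u_t$ over $r\ge 1$ with $u_t\le 0$ is attained at $r=1\in\mathcal{R}(x)$. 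Hence $u_t-\widehat{\mathcal{L}}u\equiv 0$, so $u$ is a nonnegative classical solution of \eqref{eq:PDE}, \eqref{eq:initial cond}.

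The identifications now cascade. Theorem \ref{thm:min} applied to the supersolution $u$ yields $u\ge\mathfrak{u}\ge\Phi\ge\widehat{\Phi}$, while the definition \eqref{eq:Phi} together with $\mathcal{M}_o^{\,x}\in\widehat{\mathfrak{M}}(x)$ gives $\widehat{\Phi}\ge u$; collapsing the chain produces $u\equiv\mathfrak{u}\equiv\Phi\equiv\widehat{\Phi}$, and Theorem \ref{thm:min} then identifies this common function as the smallest nonnegative classical (super)solution of \eqref{eq:PDE}, \eqref{eq:initial cond}. For the analogous minimality for the Cauchy problem \eqref{eq:a}, \eqref{eq:initial cond}, I would argue directly: for any nonnegative classical supersolution $U$ of \eqref{eq:a}, Lemma \ref{lemma:Ito} applied with $\varphi=U$ under $\mathcal{M}_o^{\,x}$ (where $\alpha(t,\mathfrak{X})=\mathbf{a}(\mathfrak{X}(t))$) turns the supersolution inequality into the statement that $L(t)X(t)U(T-t,\mathfrak{X}(t))$ is a nonnegative local supermartingale; a standard localization plus \textsc{Fatou} argument on $[0,T]$ then yields $\|x\|_1 U(T,x)\ge\mathbb{E}[L(T)X(T)]=\|x\|_1 u(T,x)$, i.e., $U\ge u=\mathfrak{u}$.

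The main obstacle is the sign step of the second paragraph: the structural identity $\mathcal{A}(x)=\{r\mathbf{a}(x):r\ge 1\}$ alone only delivers $u_t-\widehat{\mathcal{L}}u\le 0$ automatically (since $\mathbf{a}(x)\in\mathcal{A}(x)$), and the reverse inequality genuinely requires the probabilistic monotonicity $u_t\le 0$. Without this supermartingale input, $u$ would merely be a subsolution of \eqref{eq:PDE}, which would be insufficient to chain through Theorem \ref{thm:min} to the desired minimality.
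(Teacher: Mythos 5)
Your proposal is correct and essentially reproduces the paper's own route, which passes through Remark \ref{rmk:Markovian} and Propositions \ref{prop:Phi supersol}, \ref{prop:u_M vis sol} and \ref{prop:u_M supersol}: the same construction of a strongly Markovian reference system solving the linear equation \eqref{eq:a} via \cite[Theorem 4.7]{R13}, the same supermartingale argument for $L(\cdot)X(\cdot)$ yielding $u_t\le 0$, and the same linearity observation $\mathcal{L}_{r\mathbf{a}}u=r\,u_t$ combined with $\mathcal{R}(y)\subseteq[1,\infty)$ to pass to \eqref{eq:PDE}, followed by the chain of inequalities through Theorem \ref{thm:min}. Your remark that $u$ is in fact a genuine classical \emph{solution} of \eqref{eq:PDE}, not merely a supersolution, is a small but accurate sharpening of what the paper states in the proof of Proposition \ref{prop:u_M supersol}.
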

\begin{proof}
This result follows directly from Remark \ref{rmk:Markovian} and  Theorem \ref{prop:u_M supersol} below.
\end{proof}

We start with the following observation.

\begin{proposition}\label{prop:Phi supersol}
If there exist admissible systems $\,\mathcal{M}^{y}\in \widehat{\mathfrak{M}}(y)$ $(y\in \mathbb{R}_{+}^n)$ such that \begin{equation}\label{eq:V}
V(t,y):=\mathfrak{u}_{\mathcal{M}^{y}}(t,y)\,, \quad (t,y)\in [0, \infty)\times\mathbb{R}_{+}^n
\end{equation} 
is a classical supersolution of \eqref{eq:PDE}, then  $\mathfrak{u}\equiv\Phi\equiv\widehat{\Phi}\equiv V$ is the smallest nonnegative classical (super)solution of the C{\scriptsize AUCHY} problem \eqref{eq:PDE}, \eqref{eq:initial cond}.
\end{proposition}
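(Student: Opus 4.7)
The strategy is a simple sandwiching argument that chains four quantities together through known inequalities and the minimality result Theorem \ref{thm:min}. The key observation is that $V(T,x) = \mathfrak{u}_{\mathcal{M}^x}(T,x)$ sits at the \emph{bottom} of the chain (it is one of the functionals appearing in the supremum that defines $\widehat\Phi$) while, being a classical supersolution by hypothesis, it must sit at the \emph{top} of the chain by Theorem \ref{thm:min}. This forces equality throughout.

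First I would verify that $V$ satisfies the initial condition in \eqref{eq:initial cond}: from the definition \eqref{eq:um} together with $L^{\mathcal{M}}(0)=1$ and $X^{\mathcal{M}^y}(0)=\|y\|_1$, one gets $V(0,y)=\mathfrak{u}_{\mathcal{M}^y}(0,y)=1$ for every $y\in\mathbb{R}_+^n$. Hence, by hypothesis, $V$ is a nonnegative classical supersolution of the full Cauchy problem \eqref{eq:PDE}, \eqref{eq:initial cond}.

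Next I would write down the two chains of inequalities. On one side, since $\mathcal{M}^y\in\widehat{\mathfrak{M}}(y)$, the definition \eqref{eq:Phi} of $\widehat\Phi$ as a supremum gives
\begin{equation*}
V(T,x)=\mathfrak{u}_{\mathcal{M}^x}(T,x)\,\le\,\widehat\Phi(T,x),\qquad (T,x)\in[0,\infty)\times\mathbb{R}_+^n,
\end{equation*}
and then \eqref{eq:u>=Phi} yields $\widehat\Phi\le\Phi\le\mathfrak{u}\le 1$. On the other side, Theorem \ref{thm:min} applied to the nonnegative classical supersolution $V$ of the Cauchy problem produces
\begin{equation*}
V(T,x)\,\ge\,\mathfrak{u}(T,x),\qquad (T,x)\in[0,\infty)\times\mathbb{R}_+^n.
\end{equation*}
Putting the two chains together yields $V\le\widehat\Phi\le\Phi\le\mathfrak{u}\le V$, so all four functions coincide on $[0,\infty)\times\mathbb{R}_+^n$.

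Finally, for the minimality claim, I would note that $\mathfrak{u}\equiv V$ is by hypothesis a nonnegative classical supersolution of the Cauchy problem, while Theorem \ref{thm:min} asserts that every other nonnegative classical supersolution $U$ dominates $\mathfrak{u}$ pointwise; hence $\mathfrak{u}$ is indeed the smallest nonnegative classical (super)solution of \eqref{eq:PDE}, \eqref{eq:initial cond}. I do not anticipate a genuine obstacle: the entire argument is a two-sided squeeze, and the only ingredient beyond definitions is Theorem \ref{thm:min}, which is already in hand. The one point that requires a moment's care is ensuring $V$ inherits the initial condition so that Theorem \ref{thm:min} is actually applicable, and this is immediate from \eqref{eq:um}.
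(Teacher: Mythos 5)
Your proof is correct and takes essentially the same sandwiching route as the paper: chain $V\le\widehat\Phi\le\Phi\le\mathfrak{u}\le V$, with the upper bound $\mathfrak{u}\le V$ coming from Theorem \ref{thm:min}, forcing equality throughout. Your explicit check that $V(0,\cdot)\equiv 1$ (so that Theorem \ref{thm:min} is applicable) is a small point the paper leaves tacit, and is worth keeping.
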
 
\begin{proof}
Theorem \ref{thm:min} and the definition \eqref{eq:Phi} of $\widehat{\Phi}$ give $V(t,y)\ge \mathfrak{u}(t,y) \ge \Phi(t,y)\ge\widehat{\Phi}(t,y)\ge \mathfrak{u}_{\mathcal{M}^{y}}(t,y)=V(t,y)$,   hence $\mathfrak{u}\equiv\Phi\equiv\widehat{\Phi}\equiv V$ is a classical supersolution of \eqref{eq:PDE}. 
\end{proof}

To proceed further, we need the following assumption. 

\begin{asmp}\label{asmp:Markovian}
{\rm 
There exist admissible systems $\mathcal{M}^x \in \widehat{\mathfrak{M}}(x)$, $x\in \mathbb{R}_{+}^n$ 
such that \\
{\bf (i)} they share the same functionals $\sigma(t,\mathfrak{X})=\mathbf{s}(\mathfrak{X}(t))$ and $ \vartheta(t,\mathfrak{X})=\boldsymbol{\theta}(\mathfrak{X}(t))$  as in  \eqref{eq:Markovian}; and\\
{\bf (ii)} for every $x\in \mathbb{R}_{+}^n$\,,
the process $\mathfrak{X}$ in $\mathcal{M}^x$ is unique in distribution in the following sense (and thus strongly Markovian): for any admissible system $\widetilde{\mathcal{M}} 
\in {\mathfrak{M}}(x)$ with the same functionals $\sigma$ and $\vartheta$  as in $\mathcal{M}^x$, the two processes $\mathfrak{X}^{\mathcal{M}^x}$ and $\mathfrak{X}^{\widetilde{\mathcal{M}} }$ have the same law. 
}\qed
\end{asmp}

\begin{remark}\label{rmk:Markovian}
{\rm 
Assumption \ref{asmp:Markovian} holds when the conditions in Remark \ref{rmk:nonemptyM}\,(i) are satisfied.
} \qed
\end{remark}

\begin{proposition}\label{prop:u_M vis sol} 
Under Assumption \ref{asmp:Markovian}, the function $V$ of \eqref{eq:V} is\\
{\bf (i)} dominated by any nonnegative classical supersolution of the C{\scriptsize AUCHY} problem \eqref{eq:a},  \eqref{eq:initial cond};\\
{\bf (ii)} a viscosity solution of \eqref{eq:a}, if $ \boldsymbol{\theta}(\cdot)$ and $\mathbf{a}(\cdot)$ are locally bounded and $\mathbf{a}(\cdot)$ is continuous; and\\
{\bf (iii)} a classical solution of the C{\scriptsize AUCHY} problem \eqref{eq:a}, \eqref{eq:initial cond} (and thus its smallest nonnegative (super)solution), if $\mathbf{s}(\cdot)$ and $\boldsymbol{\theta}(\cdot)$ are locally L{\scriptsize IPSCHITZ}.
\end{proposition}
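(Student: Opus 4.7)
My overall plan would be to treat the three parts sequentially, exploiting that under Assumption \ref{asmp:Markovian} the integrand $\alpha(s,\mathfrak{X})$ in the It\^o decomposition of Lemma \ref{lemma:Ito} collapses to $\mathbf{a}(\mathfrak{X}(s))$, so that every relevant drift involves the single operator $\mathcal{L}_{\mathbf{a}(\cdot)}$ appearing in \eqref{eq:a}. Furthermore, uniqueness in law of Assumption \ref{asmp:Markovian}(ii) guarantees that $\mathfrak{u}_{\mathcal{M}^{y}}(\cdot,y)$ depends only on the coefficients $\mathbf{s},\boldsymbol{\theta}$ and on $y$, so $V$ is well-defined and $\mathfrak{u}_{\mathcal{M}^{\mathfrak{X}(t)}}(T-t,\mathfrak{X}(t)) = V(T-t,\mathfrak{X}(t))$ on the path.

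For part (i), fix a nonnegative classical supersolution $U$ of the \textsc{Cauchy} problem \eqref{eq:a}, \eqref{eq:initial cond}. Under $\mathbb{P}^{\mathcal{M}^{x}}$, Lemma \ref{lemma:Ito} applied with $U$ in place of $\varphi$ tells me that the finite variation part of $L(t)X(t)\,U(T-t,\mathfrak{X}(t))$ equals $-L(t)X(t)\,(U_t-\mathcal{L}_{\mathbf{a}(\mathfrak{X}(t))}U)(T-t,\mathfrak{X}(t))\,\mathrm{d}t\le 0$. Localising the stochastic integral pieces by a sequence of stopping times that control $L$ and $\mathfrak{X}$ (exactly as in the argument following \eqref{eq:E[LXphi]}), and then passing to the limit with Fatou's lemma, I obtain
\begin{equation*}
\|x\|_1\,U(T,x)\;\ge\;\mathbb{E}\bigl[L(T)X(T)\,U(0,\mathfrak{X}(T))\bigr]\;=\;\mathbb{E}\bigl[L(T)X(T)\bigr]\;=\;\|x\|_1\,V(T,x),
\end{equation*}
where $U(0,\cdot)=1$ from \eqref{eq:initial cond} was used in the middle equality.

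For part (ii), Proposition \ref{prop:martingale} upgrades the above supermartingale (with $U$ replaced by $V$) to a genuine martingale under $\mathbb{P}^{\mathcal{M}^{x}}$. I would then run the contradiction schemes of Sections \ref{section:subsol} and \ref{section:supersol}, now considerably simpler because no supremum over $\mathcal{A}(x)$ is involved. For the subsolution property at $(t_0,x_0)$ with test function $\varphi\in C^{1,2}$ strictly touching $V^{*}$ from above, I would assume $(\varphi_t-\mathcal{L}_{\mathbf{a}(x_0)}\varphi)(t_0,x_0)>0$, select $(t_n,x_n)\to(t_0,x_0)$ with $V(t_n,x_n)\to V^{*}(t_0,x_0)$, work under $\mathcal{M}^{x_n}$, and deploy the stopping times \eqref{eq:nu}--\eqref{eq:rho}; continuity of $\mathbf{a}$ and local boundedness of $\boldsymbol{\theta},\mathbf{a}$ ensure the drift $(\varphi_t-\mathcal{L}_{\mathbf{a}(\mathfrak{X}(s))}\varphi)(t_n-s,\mathfrak{X}(s))$ remains uniformly positive on $[0,\rho]$, producing the same contradiction with the martingale identity as in Section \ref{section:subsol}. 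The supersolution property at a strict minimum of $V_{*}-\varphi$ is argued symmetrically.

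For part (iii), the local \textsc{Lipschitz} hypothesis places us squarely in the setting of Theorem \ref{thm:Knightian}(ii): \cite[Theorem 4.7]{R13} then gives $V\in C([0,\infty)\times\mathbb{R}_{+}^{n})\cap C^{1,2}((0,\infty)\times\mathbb{R}_{+}^{n})$ and states that $V$ solves \eqref{eq:a} in the classical sense, while the initial condition $V(0,y)=\mathbb{E}[L^{\mathcal{M}^{y}}(0)X^{\mathcal{M}^{y}}(0)]/\|y\|_{1}=1$ is immediate. Minimality of $V$ among nonnegative classical (super)solutions is then exactly what part (i) provides. I expect the principal obstacle to be part (ii): one must manipulate the semicontinuous envelopes $V^{*}$ and $V_{*}$ alongside a varying family of initial data $x_n$ and their associated admissible systems $\mathcal{M}^{x_n}$, and carefully justify that the continuity of $\mathbf{a}$ suffices for the Itô drift to converge to $(\varphi_t-\mathcal{L}_{\mathbf{a}(x_0)}\varphi)(t_0,x_0)$ in the limit, with no uniform \textsc{Lipschitz} control available.
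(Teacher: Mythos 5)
Your proof is correct and follows essentially the same route as the paper's: both rest on Lemma \ref{lemma:Ito}, the martingale property of Proposition \ref{prop:martingale}, and the contradiction schemes of Sections \ref{section:subsol}--\ref{section:supersol}. The paper compresses this by observing that with the singleton uncertainty $\mathcal{K}(y)=\{(\boldsymbol{\theta}(y),\mathbf{a}(y))\}$ one has $\widehat{\mathcal{L}}=\mathcal{L}_{\mathbf{a}}$ and $\Phi\equiv\widehat{\Phi}\equiv V$, so (i) and (ii) drop out of Theorem \ref{thm:min} and Theorems \ref{thm:viscosity1}--\ref{thm:viscosity2}, and (iii) then follows from the $C^{1,2}$ regularity of \cite[Theorem 4.7]{R13} together with (ii). Your unpacked version of (ii) is actually slightly tidier on one point: rather than invoking Theorem \ref{thm:viscosity2} (whose Assumption \ref{asmp2} nominally asks for locally Lipschitz coefficients), you note that in the singleton case the systems $\mathcal{M}^{x_n}$ already supply the exact covariance $\mathbf{a}$, so continuity and local boundedness of $\mathbf{a},\boldsymbol{\theta}$ really do suffice for the supersolution half.
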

\begin{proof}
We will see that {\it (i)} and {\it (ii)} are  special cases of Theorems \ref{thm:min} and Theorems \ref{thm:viscosity1}--\ref{thm:viscosity2}, respectively,  with $\mathcal{K}(y)=\{(\boldsymbol{\theta}(y),\mathbf{a}(y))\}$ $(y\in \mathbb{R}_{+}^n)$ via the following observations.
First, in this case we have $\widehat{\mathcal{L}}(t,y)=\mathcal{L}_{\mathbf{a}(y)}(t,y)$ (recall the definition \eqref{eq:hatL} for $\widehat{\mathcal{L}}$ and $\mathcal{L}_{a}$). 
Moreover, by virtue of Assumption \ref{asmp:Markovian} and definition \eqref{eq:um}, we have $\mathfrak{u}_{\mathcal{M}^{y}}(t,y)=\mathfrak{u}_{\mathcal{M}}(t,y)$ for all $\mathcal{M}\in {\mathfrak{M}}(y)$, and by the definition \eqref{eq:Phi} of $\Phi$ and $\widehat{\Phi}$ gives  $$\Phi(t,y)=\widehat{\Phi}(t,y)=\mathfrak{u}_{\mathcal{M}^{y}}(t,y)=V(t,y)\, .$$  
{\it (iii)} Under these conditions, we have $V(\cdot \,,\cdot)\in C^{1,2}((0,\infty)\times \mathbb{R}_+^n)$ 
(see \cite[Theorem 4.7]{R13} for a proof that uses results from the theory of stochastic flows (\cite{Ku}, \cite{Pr}) and from parabolic partial differential equations (\cite{ET}, \cite{JT})), and conclude by invoking (ii) since the local \textsc{Lipschitz} condition on $\mathbf{s}$ and $\boldsymbol{\theta}$ implies the condition in {\it (ii)}. 
\end{proof}

\begin{proposition}\label{prop:u_M supersol} 
If Assumption \ref{asmp:Markovian} holds with locally L{\scriptsize IPSCHITZ} functions $\mathbf{s}(\cdot)$ and $\boldsymbol{\theta}(\cdot)$, and there exist subsets $\mathcal{R}(y)$ $(y\in \mathbb{R}_+^n)$ of $\,\mathbb{R}$  such that \eqref{eq:(theta,a)} holds, then, with  $\mathcal{M}^{y}\in \widehat{\mathfrak{M}}(y)$ as in Assumption \ref{asmp:Markovian}, the function
\begin{equation}\label{eq:u=V}
\mathfrak{u}(t,y) \, \equiv \, \Phi(t,y)\, \equiv \, \widehat{\Phi}(t,y) \, \equiv \, \mathfrak{u}_{\mathcal{M}^y}(t,y)
\end{equation} 
 is the smallest nonnegative classical (super)solution of the C{\scriptsize AUCHY} problem \eqref{eq:PDE}, \eqref{eq:initial cond}, as well as the smallest nonnegative classical (super)solution of the C{\scriptsize AUCHY} problem \eqref{eq:a}, \eqref{eq:initial cond}. 
\end{proposition}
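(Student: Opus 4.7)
The plan is to reduce the statement to Proposition~\ref{prop:Phi supersol} by showing that, under the stated hypotheses, the function $V(t,y):=\mathfrak{u}_{\mathcal{M}^y}(t,y)$ is a classical supersolution -- in fact, a classical solution -- of the HJB equation~\eqref{eq:PDE}; Proposition~\ref{prop:Phi supersol} then delivers the identity $\mathfrak{u}\equiv\Phi\equiv\widehat{\Phi}\equiv V$ together with the minimality statement for the \textsc{Cauchy} problem~\eqref{eq:PDE},~\eqref{eq:initial cond}. The corresponding minimality for the \textsc{Cauchy} problem~\eqref{eq:a},~\eqref{eq:initial cond} is already furnished by Proposition~\ref{prop:u_M vis sol}(iii), which additionally gives $V\in C\left([0,\infty)\times\mathbb{R}_+^n\right)\cap C^{1,2}\left((0,\infty)\times\mathbb{R}_+^n\right)$ with the pointwise identity $V_t=\mathcal{L}_{\mathbf{a}(x)}V$ on the interior.

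The decisive step is to establish the monotonicity $V_t(t,x)\le 0$ on $(0,\infty)\times\mathbb{R}_+^n$. For this I would use the representation $V(t,y)=\mathbb{E}^{\mathbb{P}^{\mathcal{M}^y}}\!\big[L^{\mathcal{M}^y}(t)X^{\mathcal{M}^y}(t)\big]/||y||_1$ from~\eqref{eq:um}, and observe that the product $L^{\mathcal{M}^y}(\cdot)\,X^{\mathcal{M}^y}(\cdot)$ is a nonnegative $\mathbb{P}^{\mathcal{M}^y}$-local martingale -- a direct It\^o product-rule computation from \eqref{eq:SDE} and \eqref{eq:L=exp} shows that its drift vanishes, exactly as is implicit in Lemma~\ref{lemma:Ito} with the choice $\varphi\equiv 1$ -- and therefore a $\mathbb{P}^{\mathcal{M}^y}$-supermartingale. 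Consequently $t\mapsto \mathbb{E}[L(t)X(t)]$ is non-increasing, hence so is $t\mapsto V(t,y)$; by the $C^{1,2}$-regularity this yields $V_t\le 0$, and combined with the interior identity $V_t=\mathcal{L}_{\mathbf{a}(x)}V$ it also yields $\mathcal{L}_{\mathbf{a}(x)}V(t,x)\le 0$.

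The structural hypothesis~\eqref{eq:(theta,a)} now does the remaining work: every $a\in\mathcal{A}(x)$ is of the form $r\,\mathbf{a}(x)$ with $r\in\mathcal{R}(x)\subset[1,\infty)$, and from the definition~\eqref{eq:hatL} the map $a\mapsto\mathcal{L}_a V$ is linear, giving $\mathcal{L}_{r\mathbf{a}(x)}V=r\,\mathcal{L}_{\mathbf{a}(x)}V$. Hence
$$
\widehat{\mathcal{L}}V(t,x)\,=\,\sup_{r\in\mathcal{R}(x)}\,r\,\mathcal{L}_{\mathbf{a}(x)}V(t,x)\,=\,\mathcal{L}_{\mathbf{a}(x)}V(t,x)\,=\,V_t(t,x),
$$
the middle equality being the key point: since $\mathcal{L}_{\mathbf{a}(x)}V\le 0$ and $r\ge 1$, the linear map $r\mapsto r\,\mathcal{L}_{\mathbf{a}(x)}V$ is maximized over $\mathcal{R}(x)$ at $r=\min\mathcal{R}(x)=1$. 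Thus $V_t-\widehat{\mathcal{L}}V\equiv 0$ on $(0,\infty)\times\mathbb{R}_+^n$, so $V$ is a classical solution of~\eqref{eq:PDE}, and Proposition~\ref{prop:Phi supersol} completes the argument. The main obstacle I anticipate is precisely the monotonicity step: without $V_t\le 0$, the structural condition $\min\mathcal{R}(x)=1$ is not enough to collapse the supremum in $\widehat{\mathcal{L}}V$ down to $\mathcal{L}_{\mathbf{a}(x)}V$, and one is left only with the reverse (subsolution) inequality $V_t-\widehat{\mathcal{L}}V\le 0$.
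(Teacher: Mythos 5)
Your proposal is correct and follows essentially the same route as the paper's proof: Proposition \ref{prop:u_M vis sol}(iii) for the regularity and the identity $V_t=\mathcal{L}_{\mathbf{a}}V$ together with the minimality for \eqref{eq:a}, the supermartingale property of $L(\cdot)X(\cdot)$ to get $V_t\le 0$, linearity of $a\mapsto\mathcal{L}_aV$ plus $\min\mathcal{R}(y)=1$ to obtain $V_t-\widehat{\mathcal{L}}V\ge 0$, and then Proposition \ref{prop:Phi supersol} to close. The only (minor) refinement in your version is observing that the supremum is actually attained at $r=1$, so that $V$ is a classical solution rather than merely a supersolution of \eqref{eq:PDE}; this is true but not needed, since Proposition \ref{prop:Phi supersol} already requires only the supersolution property.
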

\begin{proof}
By Proposition \ref{prop:u_M vis sol}\,(iii), the right-hand side of \eqref{eq:u=V}, i.e., the function $V$ of \eqref{eq:V}  solves \eqref{eq:a}:
\begin{equation*}
V_t(t,y)=\mathcal{L}_{\mathbf{a}(y)}V(t,y)\,, \quad (t,y)\in (0, \infty)\times\mathbb{R}_{+}^n\,.
\end{equation*}
Thus
\begin{equation*}
\mathcal{L}_{r\cdot\mathbf{a}(y)}V(t,y)\,=\,r\cdot\mathcal{L}_{\mathbf{a}(y)}V(t,y)\,=\,r\cdot V_t(t,y)\,, \quad (t,y)\in (0, \infty)\times\mathbb{R}_{+}^n\,.
\end{equation*}
Once we have shown that $V_t(t,y)\le 0$\,, i.e., that $V$ is nonincreasing in $t$ on $(0, \infty)$, for all $y\in \mathbb{R}_{+}^n$\,, then $V$ is a classical supersolution of \eqref{eq:PDE} on the strength of  \eqref{eq:(theta,a)}, and the proof will be complete by  Proposition  \ref{prop:Phi supersol}. 
In fact, under any given admissible system, the positive process $L(\cdot)X(\cdot)$ is a local martingale, hence a supermartingale (one can derive the formula $\mathrm{d}( L(t)X(t) ) =L(t)X(t)\left( \pi'\sigma-\vartheta'\right) (t,\mathfrak{X})\, \mathrm{d}W(t) $ with $\pi$ the market portfolio, via \textsc{It\^o}'s Rule; see \eqref{eq:dLZ} for details). Therefore
$V(t,y)=\mathbb{E}^{\mathbb{P}^{\mathcal{M}^y}}\big[L^{\mathcal{M}^y}(t)X^{\mathcal{M}^y}(t)\big]/\,||y||_1$ 
is indeed nonincreasing in $t$. 
\end{proof}

\begin{remark}
{\rm
This result is in agreement with general regularity
theory for fully nonlinear parabolic equations, as in \cite[Theorem II.4]{L83c}.
}
\end{remark}

\begin{remark}
{\rm
We have tried to find weaker conditions for Theorem \ref{thm:u=Phi} to hold, or for the function $\Phi$ to be continuous, but did not succeed. Even if all the functions $\mathfrak{u}_{\mathcal{M}}$   are of class $C^{1,2}$, their supremum $\Phi$ might still  fail to be continuous.}  \qed
\end{remark}

%%%%%%%%%%%%%%%%%%
\section{The Proof of Theorem \ref{thm:min}: Minimality}
 \label{appendix:min}
%%%%%%%%%%%%%%%%%%

The proof consists of two parts, Theorems \ref{thm:U>=Phi} and \ref{thm:U>=u}. Theorem \ref{thm:U>=Phi} shows that any nonnegative classical supersolution $U$ of the C{\scriptsize AUCHY} problem \eqref{eq:PDE}, \eqref{eq:initial cond} is strictly positive, by proving that
$U(T,x)\ge \Phi(T,x)$ and then applying the fact $\Phi(T,x)>0$ from \eqref{eq:u>=Phi}.

In Theorem \ref{thm:U>=u}, the positivity of $U$ from Theorem \ref{thm:U>=Phi} enables us to construct an investment rule from $U$ (see \eqref{eq:pi^U} below) that matches or outperforms the market portfolio over the time horizon $[0,T]$, with probability one under all admissible systems. We then conclude that $U(T,x)\ge \mathfrak{u}(T,x)$ from  the definition \eqref{eq:u} of $\mathfrak{u}(T,x)$.

The following proofs of Theorems \ref{thm:U>=Phi} and \ref{thm:U>=u} adopt the idea from \cite[Proposition 2, (5.3)--(5.15)]{FK11} and provide details for completeness. 

\begin{theorem}\label{thm:U>=Phi}
For any nonnegative classical supersolution $U$ of the C{\scriptsize AUCHY} problem \eqref{eq:PDE}, \eqref{eq:initial cond}, we have
\begin{equation}\label{eq:U>=Phi}
U(T,x)\ge \Phi(T,x)>0\,, \ \  \forall\ (T,x)\in[0, \infty)\times \mathbb{R}_{+}^n\,.
\end{equation}
\end{theorem}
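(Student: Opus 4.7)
The plan is to show that $U(T,x)\ge \mathfrak{u}_{\mathcal{M}}(T,x)$ for every admissible system $\mathcal{M}\in\mathfrak{M}(x)$; taking the supremum over $\mathcal{M}$ then yields $U(T,x)\ge \Phi(T,x)$, and the strict positivity $\Phi(T,x)>0$ follows from \eqref{eq:u>=Phi} (which in turn rests on the strict positivity of $L$ and $X$). Fix $\mathcal{M}\in\mathfrak{M}(x)$ and consider the nonnegative process
\[
Y(t)\,:=\,L(t)\,X(t)\,U\bigl(T-t,\mathfrak{X}(t)\bigr)\,,\qquad 0\le t\le T.
\]

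First, I apply the It\^o decomposition of Lemma \ref{lemma:Ito} with $\varphi=U$, which is legitimate because a classical supersolution of \eqref{eq:PDE} is by definition of class $C^{1,2}\bigl((0,\infty)\times\mathbb{R}_+^n\bigr)$. The decomposition \eqref{eq:Ito} exhibits $\mathrm{d}Y(t)$ as the sum of stochastic integrals against $W$ and of a drift contribution
$-L(t)X(t)\bigl(U_t-\mathcal{L}_{\alpha(t,\mathfrak{X})}U\bigr)\bigl(T-t,\mathfrak{X}(t)\bigr)\,\mathrm{d}t.$
Since $U$ is a supersolution of \eqref{eq:PDE} we have $U_t\ge \widehat{\mathcal{L}}U$ on $(0,\infty)\times\mathbb{R}_+^n$, and the definition \eqref{eq:hatL} of $\widehat{\mathcal{L}}$ together with $\alpha(t,\mathfrak{X}(t))\in\mathcal{A}(\mathfrak{X}(t))$ (from \eqref{eq:integrability} and \eqref{eq:A}) gives $\mathcal{L}_{\alpha(t,\mathfrak{X})}U\le \widehat{\mathcal{L}}U$ pointwise along every trajectory. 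Hence the drift of $Y$ is nonpositive, so $Y$ is a nonnegative local supermartingale on $[0,T]$.

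The final step is to promote this into a genuine supermartingale inequality. This is the main technical point, because $L$ is a priori only a $\mathbb{P}$-local martingale (see the discussion around \eqref{eq:L=exp}), so $Y$ cannot be treated as a true martingale \emph{a priori}. One chooses a reducing sequence $\tau_k\uparrow\infty$ for $Y$, writes $\mathbb{E}[Y(\tau_k\wedge T)]\le Y(0)=\|x\|_1\,U(T,x)$, and uses the nonnegativity of $Y$ together with Fatou's lemma to pass $k\to\infty$; this is precisely the standard fact that a nonnegative local supermartingale is a supermartingale. Combining with the initial condition $U(0,\cdot)\equiv 1$ from \eqref{eq:initial cond} and the path continuity of $L,\mathfrak{X}$, the limit at $t=T$ equals $L(T)X(T)$ almost surely, so
\[
\|x\|_1\,\mathfrak{u}_{\mathcal{M}}(T,x)\,=\,\mathbb{E}\bigl[L(T)X(T)\bigr]\,\le\,\|x\|_1\,U(T,x)
\]
by \eqref{eq:um}, i.e.\ $\mathfrak{u}_{\mathcal{M}}(T,x)\le U(T,x)$. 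Taking the supremum over $\mathcal{M}\in\mathfrak{M}(x)$ on the left yields \eqref{eq:U>=Phi} and completes the proof.
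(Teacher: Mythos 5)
Your proof is correct and follows essentially the same route as the paper's proof of Theorem~\ref{thm:U>=Phi}: both define $\Xi(t)=L(t)X(t)U\bigl(T-t,\mathfrak{X}(t)\bigr)$, apply Lemma~\ref{lemma:Ito} with $\varphi=U$ together with the supersolution inequality \eqref{eq:Ut-LU} to see that $\Xi$ is a nonnegative local supermartingale and hence a genuine supermartingale, and then compare $\mathbb{E}[\Xi(T)]$ with $\Xi(0)$ via the initial condition to deduce $U(T,x)\ge\mathfrak{u}_{\mathcal{M}}(T,x)$ and take the supremum. Your write-up is if anything a bit more careful than the paper's, which asserts the supermartingale property tersely (and even calls $\Xi$ a ``nonnegative local martingale'' when it should say local supermartingale), whereas you spell out the localization-and-Fatou step.
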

\begin{proof}
The second inequality was shown in \eqref{eq:u>=Phi}. 
For the first inequality, let us fix an admissible system $\mathcal{M} \in \mathfrak{M}(x)$; the remaining discussion in this proof will be carried out under this system. The key point, is to show that the process 
\begin{equation} \label{eq:Xi}
\Xi(t):=X(t)L(t)U\big(T-t,\mathfrak{X}(t)\big)
\end{equation}
is a supermartingale. 
Once this is proved, with the initial condition $U(0,\cdot)\ge 1$\,, we obtain
\begin{align*}
 ||x||_1\, U(T,x)= &\ 
 \mathbb{E}\big[ \Xi(0)\big] 
\ge \mathbb{E}\big[ \Xi(T)\big] 
=\mathbb{E}\big[X(T)L(T)U(0,\mathfrak{X}(T))\big]   \\ 
 \ge &\   \mathbb{E}\big[X(T)L(T)\big]   
= 
||x||_1\, \mathfrak{u}_{\mathcal{M}}(T,x)\,,\quad \mathrm{by\ the\ definition}\ \eqref{eq:um}\,.
\end{align*}
Since $||x||_1>0$\,, we deduce $\, U(T,x)\ge \mathfrak{u}_{\mathcal{M}}(T,x)\,$, 
which leads to \eqref{eq:U>=Phi} by the definition \eqref{eq:Phi}.

\smallskip
To show the supermartingale property of $\Xi(\cdot)$, we apply Lemma \ref{lemma:Ito} with $\varphi=U$ and get
\begin{eqnarray}\label{eq:dXi}
\nonumber   \mathrm{d}\left(\Xi(t) \right) 
&=& -L(t)X(t)\left(U_t- \mathcal{L}_{\alpha(t,\mathfrak{X})}U\right) \big(T-t,\mathfrak{X}(t)\big)\, \mathrm{d}t 
-X(t)U\big(T-t,\mathfrak{X}(t)\big)L(t)\,\vartheta'(t,\mathfrak{X})\,  \mathrm{d}W(t)\\
&&+L(t)\sum_{i,k} X_i(t)  \left[U\big(T-t,\mathfrak{X}(t)\big)+X(t) D_i U \big(T-t,\mathfrak{X}(t)\big)\right] \sigma_{ik}(t,\mathfrak{X})\, \mathrm{d}W_k(t)\,.
\end{eqnarray}
Thanks to the supersolution property of $U$ that
\begin{equation} \label{eq:Ut-LU}
\left( U_t- \mathcal{L}_{\alpha(t,\mathfrak{X})}U\right) (s,y)
\ge  \big( U_t- \widehat{\mathcal{L}}U\big) (s,y) \ge 0\,,\quad \forall\  (s,y)\in[0,\infty)\times\mathbb{R}^n_+
\end{equation} 
(recall $\mathcal{L}_{a}$ and $\widehat{\mathcal{L}}$ from \eqref{eq:hatL}) and the nonnegativity of the processes $L(\cdot) $, $X(\cdot)$ and the function $U(\cdot,\cdot)$, we conclude that $\Xi(t)$ is a nonnegative local martingale, hence a supermartingale.
\end{proof}

\begin{theorem}\label{thm:U>=u}
For any nonnegative classical supersolution $U$ of the C{\scriptsize AUCHY} problem \eqref{eq:PDE}, \eqref{eq:initial cond}, the investment rule $\pi^U\in \mathfrak{P}$ generated by this function $U$ through
\begin{equation}\label{eq:pi^U}
\pi_i^U(t,\omega):=\omega_i(t)D_i \log U(T-t,\omega(t))+\frac{\omega_i(t)}{||\omega(t)||_1}\,,\,\,\,\,  i=1,\dots,n\,,\,\,  \,  t\in[0, T]
\end{equation}
for continuous function $\omega:[0,\infty)\to \mathbb{R}_{+}^n\,$,  
satisfies the inequality
\begin{equation}\label{eq:Z>=X}
Z^{\,U(T,x)X^{\mathcal{M}}(0),\pi^U}(T)\ge X^{\mathcal{M}}(T)\,,\quad \mathbb{P}\mathrm{-a.s.}, \quad \forall\ \mathcal{M} \in \mathfrak{M}(x)\,.
\end{equation}
It then follows from the definition \eqref{eq:u} of $\,\mathfrak{u}(T,x)$ that
$$U(T,x)\ge \mathfrak{u}(T,x)\,, \ \  \forall\ (T,x)\in[0, \infty)\times \mathbb{R}_{+}^n\,.$$
\end{theorem}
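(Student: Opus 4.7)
The plan is to prove the inequality pathwise: for any fixed $\mathcal{M}\in\mathfrak{M}(x)$, introduce the reference process $V(t):=X^{\mathcal{M}}(t)\,U\bigl(T-t,\mathfrak{X}^{\mathcal{M}}(t)\bigr)$ and show that the wealth $Z(t):=Z^{\,U(T,x)X^{\mathcal{M}}(0),\pi^{U}}(t)$ satisfies $Z(t)\ge V(t)$ for every $t\in[0,T]$. Since \eqref{eq:initial cond} gives $V(T)=X^{\mathcal{M}}(T)\cdot 1=X^{\mathcal{M}}(T)$, while by construction $Z(0)=U(T,x)X^{\mathcal{M}}(0)=V(0)$, such a domination immediately delivers \eqref{eq:Z>=X} and, via the definition \eqref{eq:u} of $\mathfrak{u}$, the desired inequality $U(T,x)\ge \mathfrak{u}(T,x)$.

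To obtain the domination I would apply \textsc{It\^o}'s rule to $\log Z$ and $\log V$; this is legitimate because $U>0$ throughout $(0,\infty)\times \mathbb{R}_{+}^{n}$ by Theorem~\ref{thm:U>=Phi}, so $\log U$ is of class $C^{1,2}$. Writing $\mu_{i}:=X_{i}/X$ for the market weights, the defining formula \eqref{eq:pi^U} reads
\[
\pi^{U}_{i}(t,\mathfrak{X})=\mu_{i}(t,\mathfrak{X})+X_{i}(t)\,D_{i}\log U\bigl(T-t,\mathfrak{X}(t)\bigr).
\]
Splitting $\log V=\log X+\log U(T-\cdot,\mathfrak{X})$, the diffusion part of $d\log X$ equals $\sum_{i,k}\mu_{i}\sigma_{ik}\,\mathrm{d}W_{k}$ and that of $d\log U(T-\cdot,\mathfrak{X})$ equals $\sum_{i,k}X_{i}(D_{i}U/U)\sigma_{ik}\,\mathrm{d}W_{k}$, so the total diffusion coefficient of $d\log V$ coincides with $\sum_{i,k}\pi^{U}_{i}\sigma_{ik}\,\mathrm{d}W_{k}$, which is exactly the diffusion coefficient of $d\log Z$ read off from \eqref{eq:Z}. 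Consequently $\log(Z/V)$ is a process of finite variation; indeed, this diffusion cancellation is precisely what dictated the specific choice of $\pi^{U}$ in \eqref{eq:pi^U}.

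It remains to sign the drift of $\log(Z/V)$. Expanding $(\pi^{U})'\beta$ and $(\pi^{U})'\alpha\pi^{U}$ with $\beta=\sigma\vartheta$, and using the symmetry of $\alpha$ together with the decomposition
\[
\mathcal{L}_{\alpha}U=\tfrac{1}{2}\sum_{i,j}X_{i}X_{j}\alpha_{ij}D^{2}_{ij}U+\sum_{i,j}\mu_{i}X_{j}\alpha_{ij}D_{j}U
\]
extracted from \eqref{eq:hatL}, I expect all $\beta$-terms to cancel against the drift contributions of $d\log X$ and $d\log U$, and the quadratic-variation pieces to telescope, leaving
\[
\mathrm{d}\log\bigl(Z(t)/V(t)\bigr)=\frac{1}{U(T-t,\mathfrak{X}(t))}\bigl(U_{t}-\mathcal{L}_{\alpha(t,\mathfrak{X})}U\bigr)\bigl(T-t,\mathfrak{X}(t)\bigr)\,\mathrm{d}t.
\]
Because $U$ is a classical supersolution of \eqref{eq:PDE} and $\mathcal{L}_{a}U\le \widehat{\mathcal{L}}U$ for every admissible $a$ by \eqref{eq:hatL}, this drift is nonnegative. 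Hence $\log(Z/V)$ is $\mathbb{P}$-a.s.\ nondecreasing, $Z(T)\ge V(T)=X^{\mathcal{M}}(T)$, and the conclusion follows. The main obstacle will be the bookkeeping in this drift calculation---in particular, matching the cross term $\sum_{i,j}\mu_{i}X_{j}\alpha_{ij}D_{j}U$ with the first-order part of $\mathcal{L}_{\alpha}U$ via $X\mu_{i}=X_{i}$ and the symmetry of $\alpha$, so that the three drift pieces from $d\log X$, $d\log U$ and $-\tfrac{1}{2}(\pi^{U})'\alpha\pi^{U}$ collapse into $\mathcal{L}_{\alpha}U/U$; checking $\pi^{U}\in\mathfrak{P}$ under Assumption~\ref{asmp1} and \eqref{eq:integrability} is routine once these identifications are in place.
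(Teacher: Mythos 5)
Your argument is correct and follows essentially the same route as the paper's: compare $\log Z$ and $\log V$, observe that $\pi^U$ is precisely the choice that makes the diffusion coefficients agree (so $\log(Z/V)$ is of finite variation), and sign the drift using the supersolution inequality $U_t-\mathcal L_\alpha U\ge U_t-\widehat{\mathcal L}U\ge 0$. The paper's version of the same calculation carries the deflator $L(\cdot)$ on both sides (comparing $\log(LZ)$ with $\log\Xi=\log(LXU)$), which cancels in the difference, so the two presentations are equivalent; the one small slip in your write-up is that a supersolution only satisfies $U(0,\cdot)\ge 1$, not $U(0,\cdot)=1$, so $V(T)\ge X^{\mathcal M}(T)$ rather than $V(T)=X^{\mathcal M}(T)$ — the inequality still closes the argument.
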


\begin{proof}
The investment rule $\pi^U$ is well-defined since $U$ is positive by Theorem \ref{thm:U>=Phi}. Let us fix $\mathcal{M} \in \mathfrak{M}(x)$; the remaining discussion in this proof will be carried out under this system. 

We shall set $v:=U(T,x)X(0)$ and ${\pi}:=\pi^U$\label{p:pi}. 
The main goal is to show that the growth rate of the process  $\log \left(  L(t)Z^{v,\pi}(t)\right)  $ is no less than that of $\log \Xi(t)$ with $\Xi(t)$ defined in \eqref{eq:Xi}.
Once this is proved, noticing that these two processes start at the same initial value $v$, we obtain
\begin{equation*}
L(T)Z^{v,\pi}(T)\ge \Xi(T)=X(T)L(T)U(0,\mathfrak{X}(T))\ge X(T)L(T)\,,
\end{equation*}
as $U(0,\cdot)\ge 1$ by the  initial condition. This leads to \eqref{eq:Z>=X} as $L(T)>0$\,.

\smallskip
To start, we observe from \eqref{eq:Z} with $\pi={\it \Pi}$ that the wealth process $Z^{v,\pi}(\cdot)$ satisfies the dynamics
\begin{equation}\label{eq:dZ}
\mathrm{d}Z^{v,\pi}(t)
= Z^{v,\pi}(t) \pi'(t,\mathfrak{X})\sigma(t,\mathfrak{X}) \left[\vartheta(t,\mathfrak{X})\, \mathrm{d} t+ \mathrm{d}W(t) \right] \quad \mathrm{with }\quad  Z^{v,\pi}(0)=v\,.
\end{equation}

We apply \textsc{It\^o}'s Rule
for the product function $f_1(r_1,r_2):=r_1r_2$ with \eqref{eq:L} and \eqref{eq:dZ} yields
\begin{align}\label{eq:dLZ}
\mathrm{d}\left( L(t)Z^{v,\pi}(t) \right) 
=&\ L(t)\, \mathrm{d} Z^{v,\pi}(t)+ Z^{v,\pi}(t)\, \mathrm{d}L(t) + \mathrm{d} \langle L, Z^{v,\pi}\rangle(t)  \\ \nonumber
=&\ L(t)Z^{v,\pi}(t) 
\left[ \pi'\sigma\vartheta\,\mathrm{d}t + \pi'\sigma\,\mathrm{d}W(t) - \vartheta'\,\mathrm{d}W(t) 
- \pi'\sigma\vartheta\,\mathrm{d}t \right](t,\mathfrak{X})  \\ \nonumber
=&\  L(t)Z^{v,\pi}(t)\mathcal{H}(t,\mathfrak{X})\, \mathrm{d}W(t) \,,
\end{align}
where
\begin{equation} \label{eq:calH}
\mathcal{H}(t,\mathfrak{X})
:= (\pi'\sigma -\vartheta')(t,\mathfrak{X})\,,
\end{equation}
whose $k$-th component
\begin{align}\label{eq:calH_k}
\mathcal{H}_k(t,\mathfrak{X})
=\sum_{i} \left[X_i(t)D_i \log U\big(T-t,\mathfrak{X}(t)\big)+\frac{X_i(t)}{X(t)}\right]\sigma_{ik}(t,\mathfrak{X}) -\vartheta_k(t,\mathfrak{X})\,, \quad   {\mathrm{by}}\ \eqref{eq:pi^U}\,.
\end{align}

Applying \textsc{It\^o}'s Rule to the logarithm function for $L(\cdot)Z^{v,\pi}(\cdot)$, we obtain
\begin{equation}\label{eq:d{LZ}}
\mathrm{d}\log \left( L(t)Z^{v,\pi}(t)\right)
= \mathcal{H}(t,\mathfrak{X})\, \mathrm{d}W(t)-\frac{1}{\,2\,}\left( \mathcal{H}\mathcal{H}'\right) (t,\mathfrak{X})\, \mathrm{d}t\,.
\end{equation}

To determine the growth rate for $\log \Xi(\cdot)$, we recast \eqref{eq:dXi} into
\begin{equation*}
\mathrm{d}\left(\Xi(t) \right)  
=\Xi(t)\left[\mathcal{I}\big(T-t,\mathfrak{X}(t)\big)\mathrm{d}t+ \mathcal{H}(t,\mathfrak{X})\,\mathrm{d}W(t)\right],
\end{equation*}
by virtue of 
$$\left( \frac{D_i U}{U}\right) (s,y)=D_i \big( \log U(s,y)\big),\quad (s,y)\in[0,\infty)\times\mathbb{R}^n_+$$ 
and \eqref{eq:calH_k}, where
\begin{equation*} 
\mathcal{I}(s,y):=- \left(\frac{U_t- \mathcal{L}_{\alpha(t,\mathfrak{X})}U}{U}\right)(s,y)\le 0\,,\quad (s,y)\in[0,\infty)\times\mathbb{R}^n_+\,,\quad \mathrm{by}\ \eqref{eq:Ut-LU}\ \mathrm{and}\ U>0\,.
\end{equation*} 
Applying \textsc{It\^o}'s Rule again to the logarithm function for $\Xi(\cdot)$ and juxtaposing with \eqref{eq:d{LZ}} leads to
\begin{align*}
\mathrm{d}\log  \Xi(t)
= {\mathcal{I}}\big(T-t,\mathfrak{X}(t)\big)\mathrm{d}t+\mathcal{H}(t,\mathfrak{X})\, \mathrm{d}W(t)-\frac{1}{\,2\,}\left( \mathcal{H}\mathcal{H}'\right) (t,\mathfrak{X})\, \mathrm{d}t \le \mathrm{d}\log \left( L(t)Z^{v,\pi}(t)\right)\,,
\end{align*}
as desired.
\end{proof}
  
\begin{remark}
{\rm
In the special case of  a model without uncertainty,  the HJB equation \eqref{eq:PDE} reduces to a linear PDE. If additionally, the functions $\sigma$ and $\vartheta$ have the form of \eqref{eq:Markovian} and are locally \textsc{Lipschitz} continuous, then the arbitrage function $\mathfrak{u}$ is also shown to be dominated by every nonnegative and lower-semicontinuous viscosity supersolution of the \textsc{Cauchy} problem for the linear PDE \eqref{eq:PDE} and \eqref{eq:initial cond} \cite[Proposition 4.7]{BHS}, that satisfies certain convexity and continuity conditions. 

This local \textsc{Lipschitz} condition on $\sigma$ and $\vartheta$ is indispensable in the proof of  \cite{BHS}. 
It is the subject of future research, to determine whether this result still  holds with weaker assumptions and in the presence of model   uncertainty.
}  \qed
\end{remark}

%%%%%%%%%%%%%%%%%%%%%%%%
\section{Examples}\label{section:example}
%%%%%%%%%%%%%%%%%%%%%%%%
The volatility-stabilized model was introduced in \cite{FK05} and further generalized in \cite{Pi}, but now we add some uncertainty regarding its local volatility and relative risk structure.

\begin{example} {\bf Volatility-Stabilized Model:} 
\label{example:1}
{\rm
Take constants $c_1^*\ge c_1\ge 1/2$ and $c_2\ge 1$\,, and set 
$$\mathcal{K}(y)=\big\{ \big(\gamma_2\, \mathbf{a}(y),\gamma_1\gamma_2\,\boldsymbol{\theta}(y)\big): 
\gamma_1 \in[c_1,c_1^*],\ 
\gamma_2\in[1,c_2]\big\}\,,
$$
where
\begin{equation}\label{eq:bf{s}}
\mathbf{a}(y)=\mathbf{s}(y)\mathbf{s}'(y)\ \ {\mathrm{with}}\ \  \mathbf{s}_{ij}(y)=\bm{1}_{\{i=j\}} 
(||y||_1/y_i) ^{1/2}\,,\quad \boldsymbol{\theta}_{i}(y)=(||y||_1/y_i)^{1/2}\,,\ \ 1\le i,j\le n\,.
\end{equation} 
Then the system of Stochastic Differential Equations \eqref{eq:SDE} becomes
\begin{equation*}
\mathrm{d}X_i(t) = \gamma_1 \gamma^2_2 \, \big(X_1 (t) + \cdots + X_n (t)\big)\, \mathrm{d}t 
\qquad  \qquad  \qquad  \qquad  \qquad 
\end{equation*}
$$
\qquad  \qquad  \qquad  \qquad  \qquad  
+\, \gamma_2\sqrt{X_i(t) \big(X_1 (t) + \cdots + X_n (t)\big)\,}\, \mathrm{d}W_i(t)\,,\quad i = 1,\dots,n\,,
$$

\medskip
\noindent
or equivalently, and a bit more succinctly, 
\begin{equation*}
{\mathrm{d}}\,\log (X_i(t) )\,=\, \left( \gamma_1 -\frac{1}{2}\right)  \frac{\gamma^2_2}{\mu_i(t,\mathfrak{X}) } \, {\mathrm{d}}t  + \frac{\gamma_{2}}{ \mu_i(t,\mathfrak{X})}\, {\mathrm{d}}W_i(t), \quad i = 1,\dots,n\,,
\end{equation*}
with $\mu(t,\mathfrak{X})$ the market portfolio defined in \eqref{eq:mu}.

\medskip
\noindent
For every $x \in \mathbb{R}_{+}^n$\,, $\gamma_1 \in[c_1,c_1^*]$ and $ 
\gamma_2\in[1,c_2]$, this system of SDEs has a unique-in-distribution solution $\mathfrak{X}(\cdot)$  starting at $\mathfrak{X}(0)=x$ whose $X_i(\cdot)$'s are  time-changed versions of independent squared-\textsc{Bessel}  processes (see \cite{BP}, \cite{FK05} and \cite{G} for more details). In particular, we have $\mathfrak{X}(\cdot)\in  \mathbb{R}_{+}^n$\,.

\smallskip
Moreover, this uncertainty structure satisfies the conditions in Remark \ref{rmk:u<1} and Proposition \ref{prop:u_M supersol} with the
$\mathbf{s}$, $\boldsymbol{\theta}$ as in \eqref{eq:bf{s}} and  $\mathcal{R}(y)=[1,c_2]$. 
Hence \begin{equation}\label{eq:u=Phi<1}
\mathfrak{u}(t,y)\equiv \Phi(t,y) \equiv  \widehat{\Phi}(t,y)\equiv \mathfrak{u}_{\mathcal{M}^y}(t,y)
\begin{cases}
<1\,, &\mathrm{if}\ t>0\\ =1\,, &\mathrm{if}\ t=0
\end{cases}
\end{equation}
is the smallest nonnegative classical (super)solution of the \textsc{Cauchy} problem \eqref{eq:PDE}, \eqref{eq:initial cond}, as well as the smallest nonnegative classical (super)solution of the \textsc{Cauchy} problem \eqref{eq:a}, \eqref{eq:initial cond} (recall $\mathcal{M}^y$ from Assumption \ref{asmp:Markovian}; see \cite{G} and \cite{Pal} for a computation of the joint density of $X_1(\cdot), \dots ,X_n(\cdot)$, which leads to an explicit formula for  
$$\mathfrak{u}_{\mathcal{M}^y}(t,y) \,=\, \frac{\,\Pi_{i=1}^n\, y_i\,}{\,||y||_1\,} \, \cdot \, \mathbb{E}^{\mathbb{P}^{\mathcal{M}^y}} \left[\frac{\,  \Pi_{i=1}^n X^{\mathcal{M}^y}_i(t) \,}{\,||X^{\mathcal{M}^y}(t)||_1\,} \right]$$ and shows that this function is indeed of class $C^{1,2}$).
}
\end{example} 

\begin{example} {\bf Generalized Volatility-Stabilized Model:} \label{example:2}
{\rm
Take constants $c_i^*\ge c_i \ge 0$\,, $i=1,2,\dots,n$ and $c_{n+1}\ge 1$\,,  and set 
$$\mathcal{K}(y)=\left\{ (a,\theta): a=\gamma_{n+1}\, \mathbf{a}(y),\  
\theta_{i}=\frac{\gamma_i+\gamma^2_{n+1}}{2\gamma_{n+1}}\,\boldsymbol{\theta}_{i}(y),\ 
\gamma_i \in[c_i,c_i^*],\  \gamma_{n+1}\in [1,c_{n+1}],\ 1\le i\le n\right\}\,,
$$
where $\, \mathbf{a}(y)=\mathbf{s}(y)\mathbf{s}'(y)\, $ with 
\begin{equation}\label{eq:bf{s}2}
\   \mathbf{s}_{ij}(y)=\bm{1}_{\{i=j\}} 
\left( \frac{||y||_1}{y_i}\right)^{\kappa}G(y)\,,\ \  \ \ \ \boldsymbol{\theta}_{i}(y)=\left( \frac{||y||_1}{y_i}\right)^{\kappa} G(y)\,,\ \ \ \ 1\le i,j\le n
\end{equation} 
where $\kappa$ is a positive constant and $G:\mathbb{R}_+^n\to \mathbb{R}_+$ is a bounded and locally \textsc{Lipschitz} function (Example \ref{example:1} is a special case of this model with $\kappa=1/2$ and $G\equiv 1$).

Then the system of Stochastic Differential Equations \eqref{eq:SDE} becomes
\begin{equation*}
{\mathrm{d}}X_i(t) 
= X_i(t)   \left[  \frac{\gamma_i+\gamma^2_{n+1}}{2 \left( \mu_i(t,\mathfrak{X})\right) ^{2\kappa}}\, G^2(\mathfrak{X}(t))\, {\mathrm{d}}t + \frac{\gamma_{n+1}}{\left( \mu_i(t,\mathfrak{X})\right) ^{\kappa}}\,G(\mathfrak{X}(t))\, {\mathrm{d}}W_i(t)\right], \quad i = 1,\dots,n
\end{equation*}

\smallskip
\noindent
with $\mu(t,\mathfrak{X})$ the market portfolio defined in \eqref{eq:mu}, or equivalently,
\begin{equation*}
{\mathrm{d}}\log (X_i(t) )
= \frac{\gamma_i}{2\left( \mu_i(t,\mathfrak{X})\right) ^{2\kappa}} \, G^2(\mathfrak{X}(t))\, {\mathrm{d}}t  + \frac{\gamma_{n+1}}{\left( \mu_i(t,\mathfrak{X})\right) ^{\kappa}}\,G(\mathfrak{X}(t))\, {\mathrm{d}}W_i(t), \quad i = 1,\dots,n\,,
\end{equation*}

\smallskip
\noindent
For every $x \in \mathbb{R}_{+}^n\,$, $\gamma_i \in[c_i,c_i^*]$, $i=1,2,\dots, n$ and $\gamma_{n+1}\in [1,c_{n+1}]$,  this system of SDEs has a unique-in-distribution solution $\mathfrak{X}(\cdot)$ starting at $\mathfrak{X}(0)=x$, the components  $X_i(\cdot)$ of this solution are time-changed versions of independent squared-\textsc{Bessel} processes (see \cite{BP} and \cite[Sections 2 and 4]{Pi} for more details). In particular, we have $\mathfrak{X}(\cdot)\in  \mathbb{R}_{+}^n$\,.

This uncertainty structure also satisfies the conditions in Proposition \ref{prop:u_M supersol} with the
$\mathbf{s}$, $\boldsymbol{\theta}$ as in \eqref{eq:bf{s}2} and  $\mathcal{R}(y)=[1,c_{n+1}]$, therefore 
$$\mathfrak{u}(t,y)\equiv \Phi(t,y)\equiv \widehat{\Phi}(t,y) \equiv   \mathfrak{u}_{\mathcal{M}^y}(t,y)$$ is the smallest nonnegative classical (super)solution of the \textsc{Cauchy} problem \eqref{eq:PDE}, \eqref{eq:initial cond}, as well as the smallest nonnegative classical (super)solution of the \textsc{Cauchy} problem \eqref{eq:a}, \eqref{eq:initial cond} (recall $\mathcal{M}^y$ from Assumption \ref{asmp:Markovian}). 
If in addition $G(\cdot)$ is bounded away from zero, then the condition in Remark \ref{rmk:u<1} is satisfied as well and \eqref{eq:u=Phi<1} follows.
}
\end{example}

\smallskip

\appendix

\section{The Proof of Lemma \ref{lemma:Ito}}\label{appendix:Ito}
\begin{proof}
Let $\phi(t):=\varphi(T-t,\mathfrak{X}(t))$\label{p:phi}, 
\begin{equation*}
\mathfrak{s}_{ik}(t,\mathfrak{X}):=X_i(t) \sigma_{ik}(t,\mathfrak{X})\,,\ 
\mathfrak{s}(t,\mathfrak{X}):=(\mathfrak{s}_{ik}(t,\mathfrak{X}))_{n\times n}\, \ {\mathrm{and}}\ \ 
\mathfrak{b}(t,\mathfrak{X})=(\mathfrak{b}_1,\dots,\mathfrak{b}_n)(t,\mathfrak{X}):=(\mathfrak{s}\vartheta)(t,\mathfrak{X})\,.
\end{equation*}
Then the SDE \eqref{eq:SDE} can be rewritten as
\begin{equation}
\label{eq:SDE3}
\mathrm{d}X_i(t) = \mathfrak{b}_i(t,\mathfrak{X})\, \mathrm{d}t + \sum_{k}\mathfrak{s}_{ik}(t,\mathfrak{X}) \,\mathrm{d}W_k(t)\,, \quad i=1,2,\dots,n\,,\quad \mathfrak{X}(0) = x\,.
\end{equation}

Apply \textsc{It\^o}'s Rule to $f_2(x,y_1,\dots,y_n):=\varphi\big(T-x,(y_1,\dots,y_n)\big)$ with \eqref{eq:SDE3}:
\begin{equation}\label{eq:phi}\ 
\mathrm{d}\phi(t) =
\left[-\varphi_t \, \mathrm{d}t + \sum_i D_i \varphi\left(\mathfrak{b}_i \, \mathrm{d}t + \sum_k \mathfrak{s}_{ik}\, \mathrm{d}W_k(t)\right)\right.  
\left.+ \frac{1}{\,2\,} \sum_{i,j} D^2_{ij}\varphi \sum_k \mathfrak{s}_{ik}\mathfrak{s}_{jk}\, \mathrm{d}t\right](T-t,t,\mathfrak{X})\,,
\end{equation}
where for convenience, throughout the paper the values of $L$, $\phi$, $\mathfrak{b}_i$, $\mathfrak{s}_{ik}$, $\vartheta$, $\varphi_t$, $D_i\varphi$ and $D^2_{ij}\varphi$ at $(T-t,t,\mathfrak{X})$ stand for $L(t)$, $\phi(t)$, $\mathfrak{b}_i(t,\mathfrak{X})$, $\mathfrak{s}_{ik}(t,\mathfrak{X})$, $\vartheta(t,\mathfrak{X})$, $\varphi_t (T-t,\mathfrak{X}(t))$, $D_i\varphi(T-t,\mathfrak{X}(t))$ and $D^2_{ij}\varphi(T-t,\mathfrak{X}(t))$, respectively.

Summing (\ref{eq:SDE3}) over $i$ from $1$ to $n$ yields
\begin{equation}\label{eq:X}
\mathrm{d}X(t) = \left(\sum_i \mathfrak{b}_i\, \mathrm{d}t + \sum_{i,k} \mathfrak{s}_{ik}\, \mathrm{d}W_k(t)\right)(t,\mathfrak{X})\,.
\end{equation}

Finally, apply \textsc{It\^o}'s Rule to the exponential function for $L(\cdot)$:
\begin{equation}\label{eq:L}
\mathrm{d}L(t) = - L(t)\,\vartheta'(t,\mathfrak{X})\, \mathrm{d}W(t)\,.
\end{equation}

Plugging \eqref{eq:phi} -- \eqref{eq:L} into \textsc{It\^o}'s Rule for $f_3(r_1,r_2,r_3):=r_1 r_2 r_3$ gives
\begin{align}
\nonumber
\mathrm{d}(XL\phi)(t) 
=&\ \big[ L\phi\, \mathrm{d}X(t) + X\phi\, \mathrm{d}L(t) + XL \, \mathrm{d}\phi(t)
+ X \, \mathrm{d}\langle L,\phi\rangle_t + L \, \mathrm{d}\langle X,\phi\rangle_t + \phi \, \mathrm{d}\langle X,L\rangle_t\big] (t)\\
\label{eq:XLphi}
=&\  L\phi \left[\sum_i \mathfrak{b}_i\, \mathrm{d}t + \sum_{i,k} \mathfrak{s}_{ik}\, \mathrm{d}W_k(t)\right] 
- X\phi L \vartheta' \mathrm{d}W(t) 
- XL\varphi_t\, \mathrm{d}t \\
\nonumber
&+ XL\sum_i D_i \varphi\left[\mathfrak{b}_i\, \mathrm{d}t + \sum_k \mathfrak{s}_{ik}\, \mathrm{d}W_k(t)\right]
+ \frac{1}{\,2\,} XL\sum_{i,j} D^2_{ij}\varphi \sum_k \mathfrak{s}_{ik}\mathfrak{s}_{jk}\, \mathrm{d}t\\
\nonumber
&-\left. XL\sum_k \vartheta_k \sum_i D_i \varphi \mathfrak{s}_{ik}\, \mathrm{d}t 
+ L\sum_{k,j} \mathfrak{s}_{jk} \sum_i D_i \varphi \mathfrak{s}_{ik}\, \mathrm{d}t 
- \phi L\sum_{k,i} \mathfrak{s}_{ik}\vartheta_k\, \mathrm{d}t \right|_{(T-t,t,\mathfrak{X})}.
\end{align}
Rearranging (\ref{eq:XLphi}), we obtain
\begin{align*}
\mathrm{d}(XL\phi)(t) 
= &- XL\left(\phi_t - \frac{1}{\,2\,} \sum_{i,j} D^2_{ij}\varphi \sum_k \mathfrak{s}_{ik}\mathfrak{s}_{jk} - \frac{1}{X} \sum_{k,j} \mathfrak{s}_{jk} \sum_i D_i \varphi \mathfrak{s}_{ik} \right) \mathrm{d}t
+ L\phi\sum_{i,k} \mathfrak{s}_{ik}\, \mathrm{d}W_k(t) \\
&- X\phi L\vartheta' \mathrm{d}W(t)
+ XL\sum_i D_i \varphi \sum_k \mathfrak{s}_{ik}\, \mathrm{d}W_k(t)
+ L\phi \left(\sum_i \mathfrak{b}_i - \sum_{k,i} \mathfrak{s}_{ik}\vartheta_k\right)  \mathrm{d}t\\
&+ \left. XL \left(\sum_i D_i \varphi \mathfrak{b}_i - \sum_k \vartheta_k \sum_i D_i \varphi \mathfrak{s}_{ik} \right) \mathrm{d}t\right|_{(T-t,t,\mathfrak{X})}\\
=\ &\left. - XLg\,\mathrm{d}t 
+ L\phi\sum_{i,k} \mathfrak{s}_{ik} \, \mathrm{d}W_k(t) 
- X\phi L\vartheta' \mathrm{d}W(t)  
+ XL\sum_i D_i \varphi \sum_k \mathfrak{s}_{ik} \, \mathrm{d}W_k(t)\right|_{(T-t,t,\mathfrak{X})}\\
=\ &\left. - XLg\,\mathrm{d}t 
- X\phi L\vartheta'  \mathrm{d}W(t)  
+ L\sum_{i,k} \mathfrak{s}_{ik}\, \mathrm{d}W_k(t) \left(\phi+X D_i \varphi\right)\right|_{(T-t,t,\mathfrak{X})},
\end{align*}
where we used the definition \eqref{eq:hatG} of $g$ and the fact that $\,\mathfrak{b}=\mathfrak{s}\,\vartheta\,$.
\end{proof}

%%%%%%%%%%%%%%%%%%%%%%%%
\section{An Alternative Proof for Theorem \ref{thm:viscosity1}  }
\label{section:subsol2}
%%%%%%%%%%%%%%%%%%%%%%%%

We present here an alternative proof for Theorem \ref{thm:viscosity1}. We  still argue by contradiction, but avoid introducing the stopping time $\lambda$ of \eqref{eq:rho} and thus also the stopping time $\rho$ and  the constant $C_1$\,. We   also avoid using Lemma \ref{lemma:rho=nu}; instead, we   provide a lower bound for $\mathbb{E}[L(\nu)]$ in \eqref{eq:E[L]} below. The goal is to prove (\ref{eq:E[LXphi]}) for $\nu$ instead of $\rho\,$. We shall approximate $\nu$ by a sequence of stopping times $\nu_{\ell}$ for which (\ref{eq:E[LXphi]}) holds, then apply \textsc{Fatou}'s Lemma. This approach can  also be applied to the proof in Section \ref{section:supersol} for the supersolution property.

\smallskip
\begin{proof} 
According to Definition \ref{def:viscosity sol}\,(i) of viscosity subsolution with the $F$ in \eqref{eq:ourF}, it suffices to show that for any test function $\varphi \in C^{1,2}\left((0,\infty) \times \mathbb{R}_{+}^n\right)$ and  $(t_0,x_0) \in (0,\infty) \times \mathbb{R}_{+}^n$ with 
\begin{equation}\label{eq:max2}
\big({\widehat{\Phi}}^* - \varphi\big)(t_0,x_0) = 0 > \big({\widehat{\Phi}}^* - \varphi\big)(t,x)\,, \ \ \forall\  (t,x)\in (0,\infty) \times \mathbb{R}_{+}^n\,,
\end{equation}
(i.e., such that $(t_0,x_0)$ is a strict maximum of $\, {\widehat{\Phi}}^* - \varphi $), we have  $$\big(\varphi_t-\widehat{\mathcal{L}}\varphi\big)(t_0,x_0) \le 0\,.$$
Here $\widehat{\mathcal{L}}$ is defined in \eqref{eq:hatL}, and $\, \widehat{\Phi}^*\,$ is the upper-semicontinuous envelope of  $\, \widehat{\Phi}\,$ as in the definition \eqref{eq:u^*}. {\it We shall argue this by contradiction, assuming that} 
\begin{equation}\label{eq:hatG2}
\widehat{\mathcal{G}}(t_0,x_0)>0\, \qquad \text{for the function} \qquad \widehat{\mathcal{G}}(t,x) := \big(\varphi_t-\widehat{\mathcal{L}}\varphi\big)(t,x)\,.
\end{equation}
Since the function $F$ of \eqref{eq:F}  is continuous, so is the function $\widehat{\mathcal{G}}$ just introduced in \eqref{eq:hatG2}. 
There will
exist then, under this hypothesis and Assumption \ref{asmp1}, a neighborhood  $\,\mathcal{D}_{\delta} := (t_0 - \delta, t_0 + \delta) \times B_{\delta}(x_0)$ of $(t_0,x_0)$ in $(0,\infty) \times \mathbb{R}_{+}^n$ with $0< \delta < ||x_0||_1/ n$\,, on which    $\mathcal{K} (\cdot)$ is bounded and $\widehat{\mathcal{G}}(\cdot,\cdot)>0$ holds.

\smallskip

Let $C$ be a constant such that $\varphi(t,x)$, $||\theta||$, $|a_{ij}|<C$ $(1\le i,j \le n)$ hold for all pairs $(\theta, a=(a_{ij})_{n \times n})\in \mathcal{K}(x)$ and all $(t,x)\in \mathcal{D}_{\delta}$\,. We can assume that 
\begin{equation}\label{eq:deltaC}
16\, \delta C^2+2\, \delta^2 C^4 < 1 / 2
\end{equation}
by selecting a sufficiently small $\delta >0$\,. We notice that $|x_i-(x_0)_i|\le |x-x_0|<\delta$ holds for any $x=(x_1,\dots,x_n)\in \mathcal{D}_{\delta}$, thus
\begin{equation}\label{eq:||x||_12}
0 < ||x_0||_1 - n\delta < ||x||_1< ||x_0||_1 + n\delta \,,
\end{equation}
and introduce the  constants 
\begin{equation}\label{eq:C^star_3}
C_2:= - \max_{\partial \mathcal{D}_{\delta}} \, \big({\widehat{\Phi}}^* - \varphi \big)(t,x) \quad  {\mathrm{and}}\quad
C^{\star}_3:=\frac{C_2(||x_0||_1 - n\delta)}{2(||x_0||_1 + n\delta)}\left(\frac{1}{\,2\,}-16\, \delta C^2-2\, \delta^2 C^4\right),
\end{equation}
which are strictly positive by \eqref{eq:max2} and \eqref{eq:deltaC}, respectively.
We observe that 
$$\limsup_{(t,x)\to (t_0,x_0)}({\widehat{\Phi}}-\varphi)(t,x) = ({\widehat{\Phi}}^*-\varphi)(t_0,x_0)=0\,,$$ 
hence there exists $(t^*,x^*)\in \mathcal{D}_{\delta}$ such that  
\begin{equation}\label{eq:t*,x*2}
({\widehat{\Phi}}-\varphi)(t^*,x^*)>-C^{\star}_3\,;
\end{equation}
and by the definition \eqref{eq:Phi} of $\widehat{\Phi}$, there exists an admissible system $\mathcal{M}^{x^*}\in \widehat{\mathfrak{M}}(x^*)$ such that 
\begin{equation}\label{eq:u_M^x*2}
\mathfrak{u}_{\mathcal{M}^{x^*}}(t^*,x^*) 
> \widehat{\Phi}(t^*,x^*) - C^{\star}_3>\varphi(t^*,x^*) - 2\,C^{\star}_3\,, \quad{\mathrm{by}}\ \eqref{eq:t*,x*2}. 
\end{equation}
The remaining discussion in this section will be carried out under this admissible system, unless otherwise specified.
 
\medskip
\noindent
$\bullet~$ Let us start by constructing stopping times
\begin{equation}\label{eq:nu2}\ \ 
\nu \,(=\nu(\omega)):=\inf \big\{s\in (0, t^*] : \big(t^*-s,\mathfrak{X}(s)\big)\notin \mathcal{D}_{\delta} \big\} \le t^*-(t_0-\delta)=(t^*-t_0)+\delta< t^*\wedge 2\delta
\end{equation}
(by the definitions of $\mathcal{D}_{\delta}$ and $ t^*$), and for $\ell=1,2,\dots$,
\begin{equation}\label{eq:lambda_k}\ \ 
\lambda_{\ell} \,(=\lambda_{\ell} \,(\omega)):=\inf\{s>0: |\log L(s)| > \ell\}\uparrow \infty\,, \   \nu_{\ell} \,(=\nu_{\ell}\,(\omega)):=\nu \wedge \lambda_{\ell} \uparrow \nu\,,\ \mathbb{P}\mathrm{-a.s.}\ \mathrm{as}\ {\ell}\uparrow \infty
\end{equation}
with the usual convention inf$\,\emptyset = \infty$\,.

From definitions \eqref{eq:hatG2} and   \eqref{eq:hatL},
we see that  
\begin{equation}\label{eq:g2}
g(t,s,\mathfrak{X}) := (\varphi_t-\mathcal{L}_{\alpha(s,\mathfrak{X})}\varphi)\big(t,\mathfrak{X}(s)\big)\ge  \widehat{\mathcal{G}}\big(t,\mathfrak{X}(s)\big)\,,\quad \forall\ (t,s) \in (0, \infty)\times [0, \infty)\,.
\end{equation}
Recall that $\widehat{\mathcal{G}} (\cdot\,, \cdot) >0$ on $\mathcal{D}_{\delta}$ from the discussion right below \eqref{eq:hatG2}. Combining with \eqref{eq:g2} leads to
\begin{equation}\label{eq:g>02}
g(t^*-s,s,\mathfrak{X}) >0\,,\quad \forall\ s \in [0, \nu)\,.
\end{equation}

Let us apply now Lemma \ref{lemma:Ito} with $T=t^*$, integrating \eqref{eq:Ito} with respect to $t$ over $[0, \nu_{\ell}]$  and taking the expectation under $\mathbb{P}$, to obtain
\begin{equation}\label{eq:E[LXphi]2}
||x^*||_1\,\varphi(t^*,x^*) 
- \mathbb{E} \left[L(\nu_{\ell})X(\nu_{\ell})\varphi \big(t^*-\nu_{\ell},\mathfrak{X}(\nu_{\ell})\big)\right]
=\mathbb{E} \left[\int_0^{\nu_{\ell}} L(s)X(s)g(t^*-s,s,\mathfrak{X})\, \mathrm{d}s \right]> 0\,.
\end{equation}
Here, the strict inequality comes from \eqref{eq:g>02} and  the positivity of $\nu_{\ell}$\,; whereas,  in the equality, the expectations  of the integrals with respect to $\mathrm{d}W(t)$ or $\mathrm{d}W_k(t)$ have all vanished -- due to the  boundedness of the processes $\mathfrak{X}(\cdot)$ and $L(\cdot)$ on $[0,\nu_{\ell}]$, of the functions $\varphi$ and $D_i \varphi$ on $\overline{\mathcal{D}_{\delta}}\,$, and of the functionals $\vartheta(\cdot, \mathfrak{X})$, $\alpha_{ij}(\cdot, \mathfrak{X})$ (by Assumption \ref{asmp1}) and thus $\sigma_{ik}(\cdot, \mathfrak{X})$ on $[0,\nu_{\ell}]$.

\medskip
(We have made use here of the following facts.  
The eigenvalues $e_i$ of $\alpha$ are the nonnegative roots of the characteristic polynomial of $\alpha$, which is determined by the entries $\alpha_{ij}$\,; since the $\alpha_{ij}(\cdot, \mathfrak{X})$'s are bounded on  $[0,\nu_{\ell}]$, so are the $e_i$'s. Thus $\sigma$, which can be written as $\mathbf{QD}$, for some $n\times n$ orthonormal matrix $\mathbf{Q}$ and diagonal matrix $\mathbf{D}$ with diagonal entries $\sqrt{e_i}$\,, is also bounded.)

\medskip
Since almost surely $\nu_{\ell}\uparrow \nu$ ((\ref{eq:lambda_k})) and $L(\nu_{\ell})X(\nu_{\ell})\varphi \big(t^*-\nu_{\ell},\mathfrak{X}(\nu_{\ell}))>0$ for all $\ell$ (the positivity of $\varphi$ follows from \eqref{eq:max2} and \eqref{eq:u>=Phi}), \textsc{Fatou}'s Lemma gives
\begin{eqnarray} 
\label{eq:E[LXphi(nu)]}
&&\mathbb{E} \left[L(\nu)X(\nu)\varphi \big(t^*-\nu,\mathfrak{X}(\nu))\right] 
= \mathbb{E} \left[\liminf_{\ell\to \infty} L(\nu_{\ell})X(\nu_{\ell})\varphi \big(t^*-\nu_{\ell},\mathfrak{X}(\nu_{\ell}))\right]\\
\nonumber
&\le&\liminf_{\ell\to \infty} \mathbb{E}\left[ L(\nu_{\ell})X(\nu_{\ell})\varphi \big(t^*-\nu_{\ell},\mathfrak{X}(\nu_{\ell}))\right]
\le ||x^*||_1\,\varphi(t^*,x^*)\,,\quad  {\mathrm{by}}\ \eqref{eq:E[LXphi]2}\,.
\end{eqnarray}

\smallskip
Notice that $\,\big(t^*-\nu,\mathfrak{X}(\nu)\big) \in \partial\mathcal{D}_{\delta}\,$ holds by the definition \eqref{eq:nu2} of $\nu$, so we have 
\begin{equation}\label{eq:partialD2app}
\varphi \big(t^*-\nu,\mathfrak{X}(\nu)\big)
\ge {\widehat{\Phi}}^* \big(t^*-\nu,\mathfrak{X}(\nu)\big) +C_2
\ge {\widehat{\Phi}} \big(t^*-\nu,\mathfrak{X}(\nu)\big) +C_2\,.
\end{equation}
Plugging \eqref{eq:u_M^x*2} and (\ref{eq:partialD2app}) into (\ref{eq:E[LXphi(nu)]}) yields  
\begin{eqnarray}
\nonumber
0 &<&||x^*||_1\, \big[\,2\,C^{\star}_3   + \mathfrak{u}_{\mathcal{M}^{x^*}}(t^*,x^*)\big]
- \mathbb{E} \left[L(\nu)X(\nu)\left({\widehat{\Phi}}\big(t^*-\nu,\mathfrak{X}(\nu)\big)+C_2\right)\right] \\
\nonumber
&=&||x^*||_1\,\mathfrak{u}_{\mathcal{M}^{x^*}}(t^*,x^*)
- \mathbb{E} \left[L(\nu)X(\nu){\widehat{\Phi}} \big(t^*-\nu,\mathfrak{X}(\nu)\big)\right]+2\,C^{\star}_3\,||x^*||_1- C_2\, \mathbb{E} \big[L(\nu)X(\nu)\big]\\
\nonumber
&\le& 2\,C^{\star}_3\,||x^*||_1- C_2\, \mathbb{E} \big[L(\nu)X(\nu)\big] 
\ <\ 2\,C^{\star}_3(||x_0||_1 + n\delta)- C_2(||x_0||_1 - n\delta)\, \mathbb{E} \big[L(\nu)\big]\,,
\end{eqnarray}
(we have used Proposition \ref{prop:martingale} in the third step and the last inequality of \eqref{eq:||x||_12} at last).

Recall the definition \eqref{eq:C^star_3} of $C^{\star}_3$\,. We will arrive at a contradiction and hence complete the argument, as soon as we have shown the following inequality:
\begin{equation}\label{eq:E[L]}
\mathbb{E} \big[L(\nu)\big] \ge \frac{1}{\,2\,} -  16\, \delta C^2 - 2\, \delta^2 C^4.
\end{equation} 
 (This explains why we constructed $C^{\star}_3$ as we did in (\ref{eq:C^star_3}); in fact, setting $C^{\star}_3$ to be any value less than the right-hand side of (\ref{eq:C^star_3}) would also work). First, we observe the following double inequality
\begin{equation}\label{eq:e^r}
e^r \ge \frac{3}{2e} - \frac{r^2}{2e} > \frac{1}{\,2\,} - r^2,\quad \forall\ r\in \mathbb{R}\,.
\end{equation} 
The second inequality is obvious since $2<e<3$\,. For the first inequality, we set 
\begin{equation*}
f(r):=e^r - \frac{3}{2e} + \frac{r^2}{2e}
\end{equation*} 
and find that $f(-1)=0$\,, $f'(-1)=0$ and $f''(r)>0$\,. Hence $f(r)$ achieves its minimum 0 at $r=-1$\,. Applying \eqref{eq:e^r} to $\log L(\nu)$ yields
$$\mathbb{E} \big[L(\nu)\big]
\ge \frac{1}{\,2\,} - \mathbb{E} \left[\big(\log L(\nu)\big)^2\right].$$
Therefore, it suffices to show that 
\begin{equation}\label{eq:E[sup lgL]}
\mathbb{E}\left[\sup_{0\le t\le \nu} \big( \log  L (t) \big)^2\right] 
\le  16\, \delta C^2 + 2\, \delta^2 C^4.
\end{equation}

For any $t\in(0, \nu]\,,$ we have 
\begin{eqnarray*}
  \big( \log  L (t) \big)^2 &=& \left|-\int_0^t\vartheta'(s,\mathfrak{X})\, \mathrm{d}W(s) - \int_0^t\frac{1}{\,2\,} \,\big|\big|\vartheta(s,\mathfrak{X})\big|\big|^2\, \mathrm{d}s\right|^2\\
&\le& 2\left|\int_0^t\vartheta'(s,\mathfrak{X})\, \mathrm{d}W(s)\right|^2
+ 2\left|\int_0^t\frac{1}{\,2\,} \,\big|\big|\vartheta(s,\mathfrak{X})\big|\big|^2\, \mathrm{d}s\right|^2
\end{eqnarray*}
It follows from $t\le \nu<2\delta$ that
\begin{equation*}
\int_0^t \frac{1}{\,2\,} \,\big|\big|\vartheta(s,\mathfrak{X})\big|\big|^2\, \mathrm{d}s
\, \le \, \frac{t}{\,2\,} \,C^2 \, 
\le\, \delta C^2,
\end{equation*}
and therefore
\begin{equation*}
\mathbb{E}\left[\sup_{0\le t\le \nu} \big( \log  L (t) \big)^2\right] 
\le 2\, \mathbb{E}\left[\sup_{0\le t\le \nu} \left|\int_0^t\vartheta'(s,\mathfrak{X})\, \mathrm{d}W(s)\right|^2\right]
+ 2\, \delta^2 C^4.
\end{equation*}
Finally, the \textsc{Burkholder-Davis-Gundy} Inequality gives
\begin{eqnarray*}
\mathbb{E}\left[\sup_{0\le t\le \nu} \left|\int_0^t\vartheta'(s,\mathfrak{X})\, \mathrm{d}W(s)\right|^2\right]
\le 4\, \mathbb{E} \left[\int_0^{\nu}\left|\left|\vartheta'(s,\mathfrak{X})\right|\right|^2\mathrm{d}s\right]\le 4\, \mathbb{E} \left[\nu C^2\right]
\le 8\, \delta C^2,
\end{eqnarray*}
and \eqref{eq:E[sup lgL]} follows. 
\end{proof}

 \bigskip

%%%%%%%%%%%%%%%%%%%%


\begin{thebibliography}{99}
%%%%%%%%%%%%%%%%%%%%


\bibitem{BP} 
\textsc{Bass, R.F. \& Perkins, E.A.} (2003)
Degenerate stochastic differential equations with H\"{o}lder continuous coefficients and super-Markov chains. \emph{Trans. Amer. Math. Soc.} \textbf{355}(1), 373--405. 

\bibitem{BH}
\textsc{Bayraktar, E. \&  Huang, Y.-J.} (2013) Robust maximization of asymptotic
growth under covariance uncertainty. \emph{Ann. Appl. Probab.} \textbf{23}(5), 1817--1840.

\bibitem{BHS}
\textsc{Bayraktar, E., Huang, Y.-J. \& Song, Q.} (2012) Outperforming the market portfolio with a given probability. \emph{Ann. Appl. Probab.} \textbf{22}(4), 1465--1494.

\bibitem{BT} 
\textsc{Bouchard, B. \& Touzi, N.} (2011) Weak dynamic programming principle for viscosity solutions. \emph{SIAM J. Control Optim} \textbf{49}(3), 948--962.

\bibitem{CHSZ} 
\textsc{Chen, X., Huang, Y.-J., Song, Q. \& Zhu, C.} (2014) On the stochastic solution to a Cauchy problem associated with nonnegative price processes.
\emph{Preprint}, available at  \texttt{http://arxiv.org/abs/1309.0046}.

\bibitem{CIL} 
\textsc{Crandall, M.G., Ishii, H. \& Lions, P.L.} (1992)  User's guide to viscosity solutions of second-order partial differential equations. \emph{Bull. Amer. Math. Soc. (N.S.)} \textbf{27}(1), 1--67.

\bibitem{DSch} 
\textsc{Delbaen, F. \& Schachermayer, W.} (1995)  The no-arbitrage property under a change of num\'{e}raire. \emph{Stochastics Stochastics Rep.} \textbf{53}(3)-(4), 213--226.

\bibitem{DS}
\textsc{Dubins, L.E. \&  Savage, L.J.} (1965) \emph{How to gamble if you must. Inequalities for stochastic processes.} McGraw-Hill Book Co., New York-Toronto-London-Sydney. 

\bibitem{ET}
\textsc{Ekstr\"{o}m, E. \& Tysk, J.} (2009) Bubbles, convexity and the Black--Scholes equation. \emph{Ann. Appl.
Probab.} \textbf{19}(4), 1369--1384.

\bibitem{EK}
\textsc{Ethier, S.N.  \& Kurtz, T.G.} (1986) \emph{Markov Processes: Characterization and Convergence}. John Wiley \& Sons, New York.

\bibitem{FK10}
\textsc{Fernholz, D.  \& Karatzas, I.} (2010)  On optimal arbitrage. \emph{Ann. Appl. Probab.} \textbf{20}(4), 1179--1204.

\bibitem{FK11}
\textsc{Fernholz, D.  \& Karatzas, I.} (2011)  Optimal arbitrage under model uncertainty. \emph{Ann. Appl. Probab.} \textbf{21}(6), 2191--2225.

\bibitem{FK05}
\textsc{Fernholz, E.R.  \& Karatzas, I.} (2005)  Relative arbitrage in volatility-stabilized markets. \emph{Ann. Finance} \textbf{1}, 149--177.

\bibitem{FK09}
\textsc{Fernholz, E.R.  \& Karatzas, I.} (2009)  Stochastic Portfolio Theory: A Survey. \emph{Handbook of Numerical Analysis: Mathematical Modeling and Numerical Methods in Finance}  89--168. Elsevier, Amsterdam.

\bibitem{FKK}
\textsc{Fernholz, E.R.,  Karatzas, I. \&  Kardaras, C.} (2005)  Diversity and arbitrage in equity markets. \emph{Finance Stoch.} \textbf{31}, 37--53.

\bibitem{FV}
\textsc{Fleming, W.H. \& Vermes, D.} (1989) Convex duality approach to the optimal control of diffusions.
\emph{SIAM J. Control Optim.} \textbf{27}, 1136--1155.

\bibitem{FR}
\textsc{Friedman,   A.V.} (1975) \emph{Stochastic Differential Equations and Applications}. Vol. 1, Academic Press, New York.

\bibitem{G}
\textsc{Goia, I.} (2009) \emph{Bessel and Volatility-stabilized Processes}. ProQuest LLC, Ann Arbor, MI. Ph.D. Thesis, Columbia University.

\bibitem{HL}
\textsc{Haussmann, U.G., \& Lepeltier, J.P.} (1990)
On the existence of optimal controls. 
\emph{SIAM J. Control Optim.} \textbf{28}(4), 851--902. 

\bibitem{HOPS}
\textsc{Heath, D., Orey, S., Pestien, V. \&  Sudderth, W.} (1987) Minimizing or maximizing the expected time to reach zero. \emph{SIAM J. Control Optim.} \textbf{25}(1), 195--205.

\bibitem{JT}
\textsc{Janson, S. \& Tysk, J.} (2006) Feynman--Kac formulas for Black--Scholes-type operators. \emph{Bull. London Math. Soc.} \textbf{38}(2) 269--282.

\bibitem{KK}
\textsc{Karatzas, I.  \& Kardaras, C.} (2007) The num\'{e}raire portfolio in semimartingale financial models. \emph{Finance Stoch.} \textbf{11}(4), 447--493.

\bibitem{KS}
\textsc{Karatzas, I.  \& Shreve, S.E.} (1998) \emph{Methods of Mathematical Finance}. Springer-Verlag, New York.

\bibitem{KR}
\textsc{Kardaras, C. \& Robertson, S.} (2012) Robust maximization of asymptotic
growth. \emph{Ann. Appl. Probab.} \textbf{22}(4), 1576--1610.

\bibitem{K73}
\textsc{Krylov, N.V.} (1973) The selection of a Markov process from a Markov system of processes, and the construction of quasi-diffusion processes. \emph{Izv. Akad. Nauk SSSR Ser. Mat.} \textbf{37}(3), 691--708. 

\bibitem{Ku}
\textsc{Kunita, H.} (1990) \emph{Stochastic Flows and Stochastic Differential Equations}. Cambridge Studies in Advanced Mathematics \textbf{24}. Cambridge Univ. Press, Cambridge.

\bibitem{L83a}
\textsc{Lions, P.L.} (1983) Optimal control of diffusion processes and Hamilton-Jacobi-Bellman equations. I. The dynamic programming principle and applications. \emph{Comm. Partial Differential Equations} \textbf{8}, 1101--1174.

\bibitem{L83b}
\textsc{Lions, P.L.} (1983) Optimal control of diffusion processes and Hamilton-Jacobi-Bellman equations. II. Viscosity solutions and uniqueness. \emph{Comm. Partial Differential Equations} \textbf{8}, 1229--1276.

\bibitem{L83c}
\textsc{Lions, P.L.} (1983) Optimal control of diffusion processes and Hamilton-Jacobi-Bellman equations. III. Regularity of the optimal cost function. {\it Nonlinear Partial Differential Equations and Their Applications. Coll\`{e}ge de France Seminar, Vol. V (Paris, 1981/1982)}. Res. Notes in Math. \textbf{93}, 95--205. Pitman, Boston, MA.

\bibitem{L84}
\textsc{Lions, P.L.} (1984) Some recent results in the optimal control of diffusion processes. \emph{Stochastic Analysis (Katata/Kyoto, 1982).} North-Holland Math. Library \textbf{32}, 333--367. North-Holland, Amsterdam.

\bibitem{Lyons}
\textsc{Lyons, T.J.} (1995) Uncertain volatility and the risk-free synthesis of derivatives. \emph{Appl. Math. Finance} \textbf{2}, 117--133.

\bibitem{NN13} 
\textsc{Neufeld, A. \& Nutz, M.} (2013)
Superreplication under volatility uncertainty for measurable claims.
\emph{Electron. J. Probab.}  \textbf{18}(48), 1--14.

\bibitem{NV} 
\textsc{Nutz, M. \& van Handel, R.} (2013)
Constructing sublinear expectations on path space.
\emph{Stochastic Process. Appl.}  \textbf{123}(8), 3100--3121.

\bibitem{OPS}
\textsc{Orey, S., Pestien, V. \&  Sudderth, W.} (1987) Reaching zero rapidly. \emph{SIAM J. Control Optim.} \textbf{25}(5), 1253--1265.
  
\bibitem{Pal}
\textsc{Pal, S.} (2011) Analysis of market weights under volatility-stabilized market models.
\emph{Ann. Appl. Probab.} \textbf{21}(3) 1180--1213.

\bibitem{PS}
\textsc{Pestien, V.C. \&  Sudderth, W.D.} (1985) Continuous-time red and black: how to control a diffusion to a goal. \emph{Math. Oper. Res.} \textbf{10}(4), 599--611. 

\bibitem{Pi}
\textsc{Pickov\'{a}, R.} (2014) Generalized volatility-stabilized processes. \emph{Ann. Finance} \textbf{10}(1), 101--125.

\bibitem{Pr}
\textsc{Protter, P.E.} (1990) 
\emph{Stochastic Integration and Differential Equations}. Springer-Verlag, Berlin.

\bibitem{R11}
\textsc{Ruf, J.} (2011) \emph{Optimal Trading Strategies under Arbitrage}. ProQuest LLC, Ann Arbor, MI. Ph.D. Thesis, Columbia University.

\bibitem{R13}
\textsc{Ruf, J.} (2013) Hedging under arbitrage. \emph{Math. Finance} {\bf 23}(2), 297--317.

\bibitem{ST}
\textsc{Soner, H.M.  \& Touzi, N.} (2002) Dynamic programming for stochastic target problems and geometric flows. \emph{J. Eur. Math. Soc. (JEMS)} \textbf{4}(3), 201--236.

\bibitem{SV}
\textsc{Stroock, D.W.  \& Varadhan, S.R.S.} (1979) \emph{Multidimensional Diffusion Processes}. Grundlehren der Mathematischen Wissenschaften [Fundamental Principles of Mathematical Sciences] \textbf{233}. Springer-Verlag, Berlin.

\bibitem{SW}
\textsc{Sudderth, W.D. \&   Weerasinghe, A.} (1989) Controlling a process to a goal in finite time. \emph{Math. Oper. Res.} \textbf{14}(3), 400--409.

\bibitem{T}
\textsc{Touzi, N.} (2013) \emph{Optimal Stochastic Control, Stochastic Target Problems, and Backward SDE}. With Chapter 13 by Ang\`es Tourin. Fields Institute Monographs \textbf{29}. Springer, New York.

$ $

\end{thebibliography}
\end{document}